\newtheorem{theorem}{Theorem}
\theoremstyle{plain}
\newtheorem{corollary}{Corollary}
\newtheorem{lemma}{Lemma}
\newtheorem{proposition}{Proposition}
\newtheorem{remark}{Remark}
\numberwithin{equation}{section}
\begin{document}
\title[Isometric deformations of pseudoholomorphic curves in $\mathbb{S}^6$]{Isometric deformations of pseudoholomorphic curves in the
nearly K{\"a}hler sphere $\mathbb{S}^6$}
\author{Amalia-Sofia Tsouri}
\address{Department of Mathematics, University of Ioannina, 45110 Ioannina,
Greece}
\email{a.tsouri@uoi.gr}
\subjclass[2020]{Primary 53A10; Secondary 53C42}
\keywords{Minimal surfaces, nearly K{\"a}hler sphere
$\mathbb{S}^6,$ pseudoholomorphic curves, exceptional surfaces, isometric deformations}
\thanks{}

\begin{abstract}
The aim of the paper is to investigate the rigidity and the deformability
of pseudoholomorphic curves in the
nearly K{\"a}hler sphere $\mathbb{S}^6,$ among minimal surfaces in spheres.
Under various assumptions we describe the moduli space
of all noncongruent minimal surfaces $f\colon M\to\mathbb{S}^n$ that
are isometric to a pseudoholomorphic curve
in $\mathbb{S}^6.$ Moreover, we prove a Schur type theorem
(see \cite[p. 36]{Chnew}) for minimal surfaces in spheres. 
\end{abstract}

\maketitle

\section{Introduction}

Rigidity and deformability problems of a given isometric immersion are fundamental
problems of the theory of isometric immersions. Of particular interest is the
classification of all noncongruent minimal surfaces in a space form, that are isometric to a given one.
This problem was raised by Lawson in \cite{Law} and partial answers were provided by several authors.
For instance, see \cite{Cal2, J, L, Law,M1, N, S, S1, V9,V08}. 

The aforementioned problem has drown even more attention for minimal
surfaces in spheres. That is mainly due to the difficulty that arises from the fact that the Gauss map is
merely harmonic, in contrast to minimal surfaces in the Euclidean space where the Gauss map
is holomorphic. The classification problem of minimal surfaces in spheres that are
isometric to minimal surfaces in the sphere $\mathbb{S}^3$ was raised by Lawson
in \cite{L}, where he stated a conjecture that is still open. This conjecture has been
only confirmed for certain classes of minimal surfaces in spheres (see \cite{N, S, S1,
V9,V08}). It is worth noticing that a surface is locally isometric to a minimal surface
in $\mathbb{S}^3$ if its Gaussian curvature $K$ satisfies the spherical Ricci condition
$$
\Delta\log(1-K)=4K,
$$
away from totally geodesic points, where $\Delta$ is the Laplacian operator of the surface
with respect to its
induced metric.

In this paper, we turn our interest to a distinguished class of minimal surfaces in
spheres, the so-called {\textit{pseudoholomorphic curves}} in the nearly K{\"a}hler
sphere $\mathbb{S}^6.$ This class of surfaces was introduced by Bryant
\cite{Br} and has been widely studied (cf. \cite{BVW, H, EschVl}). The
pseudoholomorphic curves in $\mathbb{S}^6$ are nonconstant smooth maps from
a Riemann surface into the nearly K{\"a}hler sphere $\mathbb{S}^6,$ whose
differential is complex linear with respect to the almost complex structure of 
$\mathbb{S}^6$ that is induced from the multiplication of the Cayley numbers. 

In analogy with Calabi's work \cite{Cal2}, in the present paper we focus on the
following problem:

\begin{quotation}
\textit{Classify noncongruent minimal surfaces in spheres that are isometric to a given
pseudoholomorphic curve in the nearly K{\"a}hler sphere $\mathbb{S}^6.$}
\end{quotation}

One of the aims in this paper is to investigate the moduli space of all noncongruent substantial minimal
surfaces $f\colon M\to\mathbb{S}^n$ that are isometric to a given pseudoholomorphic curve
 $g\colon M\to\mathbb{S}^6.$ By substantial, we mean that $f(M)$ is not contained in any totally geodesic submanifold of $\mathbb{S}^n.$
It is known \cite{Br, EschVl} that any pseudoholomorphic curve $g\colon M\to
\mathbb{S}^6$ is $1$-isotropic (for the notion of $s$-isotropic surface see Section 2). The nontotally geodesic pseudoholomorphic curves
in $\mathbb{S}^6$ are either substantial in a totally geodesic $\mathbb{S}^5\subset
\mathbb{S}^6$ or substantial in $\mathbb{S}^6$  (see \cite{BVW}). In the latter case, the curve is either nonisotropic or
null torsion (studied by Bryant \cite{Br}). It turns out that null torsion curves are isotropic.
In order to study the above problem we have to deal
separately with these three classes of pseudoholomorphic curves. It is worth noticing that a characterization of Riemannian metrics
that arise as induced metrics on each class of these pseudoholomorphic curves was given in \cite{EschVl, V16} (for details see Section 5).

Flat minimal
surfaces in odd dimensional spheres (see \cite{K11, Br2}) are obviously isometric to any flat pseudoholomorphic curve in $\mathbb{S}^5.$ In \cite{VT} we provided a method to produce nonflat minimal surfaces in odd dimensional
spheres that are isometric to pseudoholomorphic curves in $\mathbb{S}^5.$ More precisely, let 
$g_{\theta}, 0\leq \theta<\pi$, be the associated family of a simply connected
pseudoholomorphic curve $g\colon M\to\mathbb{S}^{5}.$ We consider the surface
$G\colon M\to\mathbb{S}^{6m-1}$ defined by
\begin{equation}\label{gth}
G=a_{1}g_{\theta _{1}}\oplus \cdots\oplus a_{m}g_{\theta _{m}},  
\end{equation}
where $a_{1},\dots\,,a_{m}$ are any real numbers with $\sum_{j=1}^{m}a_{j}^
{2}=1,$ $0\leq \theta _{1}<\cdots<\theta_{m}<\pi,$ and $\oplus $ denotes the
orthogonal sum with respect to an orthogonal decomposition of the Euclidean space
$\mathbb{R}^{6m}.$  It is easy to see that $G$ is minimal and isometric to $g.$

It was verified in \cite{VT} that minimal surfaces given by \eqref{gth} belong to the class of
exceptional surfaces that was studied in \cite{V08, V16}. These are minimal surfaces
whose all Hopf differentials are holomorphic, or equivalently all curvature ellipses of
any order have constant eccentricity up to the last but one (see Sections 2 and 3 for details). In addition, in \cite{VT} it was proved
that minimal surfaces in spheres that are isometric to a given pseudoholomorphic curve in $\mathbb{S}^5$ are exceptional
under appropriate global assumptions. In fact, we proved that besides flat minimal
surfaces in odd dimensional spheres, the only simply connected exceptional surfaces
that are isometric to a pseudoholomorphic curve in $\mathbb{S}^5$ are of the type \eqref{gth}. 

Describing the moduli space of noncongruent minimal surfaces in spheres that are isometric
to a given pseudoholomorphic curve in the nearly K{\"a}hler $\mathbb{S}^6$ in full generality, turns out to be
a hard problem. To begin with, we investigate this moduli space in the class
of exceptional substantial surfaces in $\mathbb{S}^n.$ We denote by
$\mathcal{M}_n^{\mathrm{e}}(g)$ the moduli space of all noncongruent exceptional surfaces
$f\colon M\to\mathbb{S}^n$ that are isometric to a given pseudoholomorphic curve $g\colon M\to\mathbb{S}^6.$

At first we deal with nonflat pseudoholomorphic curves in
a totally geodesic $\mathbb{S}^5\subset\mathbb{S}^6$ in the case where $n$ is odd.
Given such a pseudoholomorphic curve $g,$ we are able to show that
the moduli space $\mathcal{M}_n^{\mathrm{e}}(g)$ is empty unless $n\equiv 5\;\mathrm{mod}\; 6,$
in which case $\mathcal{M}_n^{\mathrm{e}}(g)$ splits as
$$
\mathcal{M}_n^{\mathrm{e}}(g)=\mathbb{S}_\ast^{m-1}\times\Gamma_0,
$$
where $m=(n+1)/6,$

$$
\mathbb{S}_\ast^{m-1}=\Big\{(a_1,\dots,a_m)\in\mathbb{S}^{m-1}
\subset\mathbb{R}^m \colon \prod\limits_{j=1}^{m}a_j\neq0\Big\}
$$
and $\Gamma_0$ is a subset of
$$
\Gamma^m=\big\{(\theta_1,\dots,\theta_m)\in\mathbb{R}^m\colon 0\leq \theta _1<\cdots<\theta_m<\pi
\big\}.
$$
The case where $M$
is simply connected was studied in \cite[Theorem 3]{VT}, where it was proved that $\Gamma_0=\Gamma^m$.
In this paper, we prove that if $\Gamma_0$ is a proper subset of $\Gamma^m$ then it is
locally a disjoint finite
union of $d$-dimensional real analytic subvarieties where $d=0,\dots,m-1$.
If $M$ is compact and not homeomorphic to the torus, then it is shown that $\Gamma_0$ is
a proper subset of $\Gamma^m$ (see Theorems \ref{lineorfinite} and \ref{compactps}). As a
result, we are able to prove the following theorem, which provides an answer to
the aforementioned problem for minimal surfaces in spheres with low codimension. 
\begin{theorem}\label{mikradim}
Let $g\colon M\to\mathbb{S}^5$ be a compact pseudoholomorphic curve. If $M$ is not
homeomorphic to the torus, then the moduli space of all noncongruent substantial
minimal surfaces in $\mathbb{S}^n,
\, 4\le n\le 7,$ that are isometric to $g$ is empty, unless $n=5$ in which case the moduli space
is a finite set.
\end{theorem}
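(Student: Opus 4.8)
The plan is to reduce the problem to the already-analyzed moduli space of exceptional surfaces and then read off the three dimensional cases. First I would record two reductions coming from the hypotheses on $M$. Since $M$ is a compact Riemann surface that is not homeomorphic to the torus, the Gauss--Bonnet theorem gives $\int_M K\,dA=2\pi\chi(M)\neq 0$, so $M$ admits no flat metric and hence $g$ is nonflat. Moreover, by the global rigidity result of \cite{VT}, the compactness of $M$ forces every substantial minimal surface $f\colon M\to\mathbb{S}^n$ isometric to $g$ to be exceptional. Consequently the moduli space in the statement coincides with $\mathcal{M}_n^{\mathrm{e}}(g)$, and it suffices to determine this space for $4\le n\le 7$.

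Next I would pin down the admissible ambient dimension. I expect the uniform mechanism to be a passage to the universal cover $\pi\colon\widetilde{M}\to M$: lifting $f$ and $g$ to $\widetilde{f}$ and $\widetilde{g}$, the lift $\widetilde{g}$ is again a nonflat pseudoholomorphic curve in $\mathbb{S}^5$, the lift $\widetilde{f}$ is exceptional and isometric to $\widetilde{g}$, and $\widetilde{f}$ has the same image as $f$, hence is substantial in the same $\mathbb{S}^n$. By the simply connected classification of \cite{VT}, such a $\widetilde{f}$ must be of the type \eqref{gth}, and a surface of that type is substantial precisely in $\mathbb{S}^{6\widetilde{m}-1}$ for some integer $\widetilde{m}\ge1$. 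Therefore a nonempty $\mathcal{M}_n^{\mathrm{e}}(g)$ requires $n=6\widetilde{m}-1$, i.e. $n\equiv 5\bmod 6$; for odd $n$ this is exactly the splitting theorem quoted above. Among $n\in\{4,5,6,7\}$ the values $n=4,6,7$ all fail this divisibility (for instance $7+1=8$ is not divisible by $6$), so $\mathcal{M}_4^{\mathrm{e}}(g)=\mathcal{M}_6^{\mathrm{e}}(g)=\mathcal{M}_7^{\mathrm{e}}(g)=\varnothing$, giving the emptiness claim outside $n=5$.

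For $n=5$ we have $m=(n+1)/6=1$, and the splitting theorem yields $\mathcal{M}_5^{\mathrm{e}}(g)=\mathbb{S}_\ast^{0}\times\Gamma_0$, where $\mathbb{S}_\ast^{0}=\{\pm1\}$ has two points and $\Gamma_0\subseteq\Gamma^1=[0,\pi)$. By Theorem \ref{compactps}, the compactness of $M$ together with $M$ not being a torus forces $\Gamma_0$ to be a proper subset of $\Gamma^1$. Theorem \ref{lineorfinite} then shows that $\Gamma_0$ is locally a disjoint finite union of $d$-dimensional real analytic subvarieties with $0\le d\le m-1=0$; thus $\Gamma_0$ is a $0$-dimensional, hence discrete and closed, real analytic subset of the parameter space. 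Since the members $g_\theta$ and $g_{\theta+\pi}$ of the associated family differ by the antipodal map and are therefore congruent, the relevant parameter space of noncongruent members is the compact circle $\mathbb{R}/\pi\mathbb{Z}$, and a discrete closed subset of a compact space is finite. Hence $\Gamma_0$ is finite, so $\mathcal{M}_5^{\mathrm{e}}(g)=\mathbb{S}_\ast^{0}\times\Gamma_0$ is a finite set, completing the argument.

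The decisive imported inputs are the global rigidity that upgrades an arbitrary compact isometric minimal surface to an exceptional one (without which the explicit description of $\mathcal{M}_n^{\mathrm{e}}(g)$ could not be invoked) and the real analyticity and properness of $\Gamma_0$ from Theorems \ref{lineorfinite} and \ref{compactps}. Granting these, what remains is bookkeeping: the divisibility constraint $n=6\widetilde{m}-1$ that eliminates $n=4,6,7$, and the step from ``$0$-dimensional and proper'' to ``finite.'' I expect the main obstacle to lie in the even-dimensional exclusion: one must verify carefully that the lift $\widetilde{f}$ is genuinely substantial in the same $\mathbb{S}^n$ as $f$ and that exceptionality both descends to and ascends from the universal cover, since the entire treatment of $n=4,6$ rests on this reduction. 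The finiteness step is subtler only in ruling out accumulation of $\Gamma_0$, which is settled by the compactness of the circle of noncongruent associated family members.
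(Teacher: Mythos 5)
Your proposal is correct in its overall architecture and matches the paper's strategy: import from \cite{VT} the fact that compactness (plus $M$ not a torus) forces any isometric minimal surface to be exceptional, and then finish with the moduli space results for $n=5$. The paper's actual proof is a two-line affair: it cites \cite[Theorem 5]{VT} together with \cite[Corollary 1]{VT}, which jointly deliver \emph{both} exceptionality \emph{and} the conclusion $n=5$ in one stroke (this is also exactly where the hypothesis $4\le n\le 7$ is consumed --- the exceptionality result of \cite{VT} is a low-codimension statement, a point your write-up does not flag), and then invokes Theorem \ref{compactps}. Where you genuinely diverge is in the exclusion of $n=4,6,7$: you re-derive the dimension restriction from the simply connected classification of \cite[Theorem 3]{VT} via the universal cover and the divisibility condition $n\equiv 5\bmod 6$. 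The caveat is that the moduli space framework of Section 6, and the quoted form of \cite[Theorem 3]{VT}, are set up under the standing assumption that $n$ is \emph{odd}; as stated in this paper, the condition $n\equiv 5\bmod 6$ is a conclusion drawn for odd $n$, so it does not literally dispose of the even cases $n=4$ and $n=6$. Those cases are covered in the paper only through the direct citation of \cite[Corollary 1]{VT}, so your divisibility argument either needs the full strength of that corollary anyway or a separate justification that the classification holds without the parity assumption. Your handling of the $n=5$ case is sound and, if anything, slightly more elaborate than necessary: for $m=1$ Theorem \ref{compactps} already states that the line through any point of $\Gamma_0$ meets $\Gamma_0$ in finitely many points, which for a one-dimensional parameter space is finiteness outright, so the detour through discreteness plus compactness of the circle (and the attendant worry about accumulation at a boundary point of $[0,\pi)$) can be bypassed.
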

The necessity of the assumption
that the surface is not homeomorphic to the torus is justified by the class of flat
tori in $\mathbb{S}^5$ (see Remark \ref{flattori}).

Given a pseudoholomorphic curve 
$g\colon M\to\mathbb{S}^6,$ 
we are able to give the following description of the moduli space
(for the definition of the normal curvatures we refer the reader to Section 2).
\begin{theorem}\label{finiteorcircle}
Let $g\colon M\to \mathbb{S}^6$ be a pseudoholomorphic curve. The moduli space of all noncongruent minimal surfaces
$f\colon M\to \mathbb{S}^6$ that are isometric to $g$ and have the same normal curvatures with $g,$
is either a circle or a finite set.
\end{theorem}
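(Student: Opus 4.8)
The plan is to prove that, up to congruence, every competitor $f$ is a member of the associated family of $g$, and then to settle the dichotomy by a real-analyticity argument on the circle. First I would record the consequences of the hypotheses through the fundamental theorem of submanifolds. Since $f$ is isometric to $g$, both surfaces carry the same induced metric and hence the same Gaussian curvature $K$; by hypothesis they also share the same normal curvatures. Fixing a local complex coordinate $z$ compatible with the conformal structure of $M$, minimality lets me encode each second fundamental form through its Hopf differentials. In this language the Gauss equation expresses $K$ in terms of the moduli of these Hopf differentials, while the Ricci equation expresses the normal curvatures in terms of the same data. Thus the equalities $K_f=K_g$ and $K_f^\perp=K_g^\perp$ pin down the moduli (absolute values) of all the Hopf differentials of $f$, forcing them to agree with those of $g$, and leave only their phases undetermined.

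Next I would exploit that $g$ is $1$-isotropic, so that several of its Hopf differentials vanish and the remaining ones are holomorphic. Comparing the eccentricities of the curvature ellipses, which are encoded by the common normal curvatures, I would first show that $f$ inherits this structure, i.e. $f$ has the same isotropy order and the same vanishing/holomorphic pattern of Hopf differentials. The key point is then function-theoretic: a nonvanishing holomorphic differential is determined by its modulus up to a global unimodular constant, since the argument of a nonvanishing holomorphic function is the harmonic conjugate of the logarithm of its modulus, hence determined up to an additive constant. The Codazzi equations tie the phases of the higher Hopf differentials to that of the first, so that a single free phase $e^{i\theta}$ survives. Consequently $f$ is locally congruent to the member $g_\theta$ of the associated family of $g$, and the moduli space is realized as a subset of the associated-family parameter $\theta$, which ranges over a circle $\mathbb{S}^1$.

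It then remains to decide which phases $\theta$ produce a globally well-defined immersion of $M$ that is noncongruent to the others. By construction each $g_\theta$ shares the metric and the normal curvatures of $g$, so the only constraints are the single-valuedness of $g_\theta$ on $M$, governed by the vanishing of certain real periods, together with the identifications coming from ambient congruences. These conditions are expressed by real-analytic equations in $\theta$, so the set of admissible phases is a real-analytic subset of $\mathbb{S}^1$. A real-analytic subset of the connected one-dimensional manifold $\mathbb{S}^1$ is either all of $\mathbb{S}^1$ or a discrete, hence finite, set, and passing to the quotient by the finite group of congruences preserves this alternative. This yields precisely the stated dichotomy: the moduli space is either a circle or a finite set.

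The main obstacle will be the rigidity step, namely showing that the equality of the normal curvatures forces $f$ to share the isotropy and the holomorphicity pattern of $g$ and leaves no freedom beyond the single associated-family phase. This requires a careful analysis of the Codazzi equations at each order of the osculating filtration, ruling out deformations that would alter the eccentricities of the higher curvature ellipses; the $1$-isotropy of $g$ together with the compatibility between the normal curvatures and the holomorphic Hopf differentials is the essential ingredient. Depending on whether $g$ is nonisotropic or of null torsion (the latter being isotropic), one may have to treat the two cases separately, but in both the conclusion is that the admissible isometric deformations reduce to the associated family.
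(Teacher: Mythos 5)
Your overall architecture coincides with the paper's: reduce every competitor to a member of the associated family of $g$, then observe that the admissibility condition is real-analytic in the phase $\theta$, so the admissible subset of the circle is either everything or finite. The second half of your argument is essentially what the paper does: it passes to the universal cover, writes the obstruction to descent as the monodromy condition $\Psi_\theta(\sigma)=\mathrm{Id}$ for all deck transformations $\sigma$, invokes real-analyticity of $\theta\mapsto\Psi_\theta(\sigma)$ (via the reference [EQ]), and concludes by an accumulation-point/mean-value argument that the admissible set is either all of $[0,\pi)$ or finite.

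The gap is in your first, ``rigidity'' step, which you yourself flag as the main obstacle but do not close. The claim that an isometric minimal $f\colon M\to\mathbb{S}^6$ with the same normal curvatures as $g$ must locally belong to the associated family of $g$ is precisely [V, Corollary 5.4(ii)], which the paper cites rather than reproves; it is a genuine theorem about $1$-isotropic surfaces in $\mathbb{S}^6$, not a formal consequence of matching Hopf-differential moduli. Your function-theoretic argument --- that a holomorphic object is determined by its modulus up to a unimodular constant --- only controls the scalar invariants $\Phi_r=\langle\alpha_{r+1}^{(r+1,0)},\alpha_{r+1}^{(r+1,0)}\rangle\, dz^{2r+2}$, which are contractions of the higher fundamental forms, not the forms themselves; recovering the full second and third fundamental forms and the normal connection of $f$ up to a single phase requires the complete Gauss--Codazzi--Ricci analysis carried out in [V]. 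Note also that the paper's proof is a three-way case split: $g$ substantial in a totally geodesic $\mathbb{S}^5$ (handled by [DV, Theorem 1]), $g$ isotropic and substantial in $\mathbb{S}^6$ (where Theorem \ref{congruent2iso} gives a single point, hence a finite set), and $g$ nonisotropic (the associated-family/monodromy argument above). You gesture at such a split at the end, but the isotropic case in particular is not an associated-family statement --- it is outright rigidity --- and needs its own argument.
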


 Isotropic pseudoholomorphic curves turn out to be rigid. For compact minimal surfaces our result is stated as follows.
\begin{theorem}\label{forintro}
Let $f\colon M\to\mathbb{S}^n$ be a compact substantial minimal surface. If $f$ is isometric to an isotropic pseudoholomorphic curve $g\colon M \to\mathbb{S}^6,$ then $n=6$ and $f$ is
congruent to $g.$
\end{theorem}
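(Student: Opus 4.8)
The plan is to prove the theorem in the spirit of Calabi's rigidity for superminimal surfaces. Since $g$ is an isotropic pseudoholomorphic curve in $\mathbb{S}^6$, it is null torsion, hence fully isotropic and substantial in $\mathbb{S}^6$; in particular all of its Hopf differentials vanish, so $g$ is exceptional. The strategy is to first show that $f$ is itself isotropic, and then to identify isotropic minimal surfaces with horizontal holomorphic curves in the twistor space of $\mathbb{S}^n$, for which the induced metric determines the curve up to an ambient isometry. As $f$ and $g$ are isometric, this forces the twistor spaces, and hence the dimensions, to coincide, yielding $n=6$ and $f$ congruent to $g$.

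First I would establish that $f$ is exceptional. This is the $\mathbb{S}^6$ analogue of the result obtained for $\mathbb{S}^5$ in \cite{VT}: for a compact minimal surface that is isometric to a pseudoholomorphic curve, the Codazzi equations together with the holomorphicity forced by the shared metric imply that every Hopf differential of $f$ is holomorphic. Equivalently, the curvature ellipses of $f$ of each order have constant eccentricity up to the last but one; denote these constants by $\epsilon_1,\epsilon_2,\dots$. Thus both $f$ and $g$ are exceptional with the same induced metric, the only difference being that all eccentricities of $g$ are zero.

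The heart of the argument, and the step I expect to be the main obstacle, is to prove that the constants $\epsilon_k$ vanish. The difficulty is intrinsic: being isometric to an isotropic surface does not by itself force isotropy pointwise, since the induced metric fixes only the sizes of the curvature ellipses, i.e. the normal curvatures, and not their eccentricities, and a priori a nonisotropic minimal realization of the same metric might exist in some higher dimensional sphere. The plan is to rule this out by a global argument. The Gauss equation expresses $1-K$ through the size of the first ellipse, and the higher Gauss--Ricci equations form a Toda-type system coupling the successive normal curvatures $a_k$, the constant eccentricities $\epsilon_k$, and $K$. Writing each holomorphic Hopf differential as a section of a power of the canonical bundle of $M$ and applying Gauss--Bonnet, I would derive integral identities relating the $\epsilon_k$ to the fixed curvature integrals of the common metric. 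Because $g$ realizes this metric isotropically, these identities are saturated with all eccentricities equal to zero; a convexity, respectively maximum-principle, argument applied to the same identities for $f$ should then force each constant $\epsilon_k$ to take the same extremal value zero, so that $f$ is isotropic.

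Once $f$ is known to be isotropic, it is superminimal (so that $n$ is even) and therefore corresponds to a horizontal holomorphic curve in the twistor space of $\mathbb{S}^n$, a flag manifold carrying a K{\"a}hler--Einstein metric. By Calabi's rigidity for holomorphic curves, the induced metric determines such a curve up to an ambient holomorphic isometry, and in particular determines the minimal twistor space in which it is full. Since the metric of $f$ equals that of the curve $g$ associated with $\mathbb{S}^6$, the twistor spaces coincide, whence $n=6$, and the two holomorphic curves are congruent; projecting back to $\mathbb{S}^6$ shows that $f$ is congruent to $g$. This is consistent with Theorem \ref{finiteorcircle}: once $f$ is isotropic of dimension six with the normal curvatures prescribed by the metric, that theorem yields a moduli space which is a circle or a finite set, and Calabi rigidity, equivalently the triviality of the associated family of a superminimal surface, collapses it to the single congruence class of $g$.
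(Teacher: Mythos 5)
Your overall skeleton (first show $f$ is isotropic, then invoke rigidity of isotropic surfaces to get $n=6$ and congruence) matches the paper's strategy, but the step you yourself flag as ``the main obstacle'' --- forcing the eccentricities to vanish --- is exactly where the proof lives, and your sketch does not supply the mechanism. Two concrete problems. First, your opening claim that $f$ is exceptional (all Hopf differentials holomorphic) because it is isometric to a pseudoholomorphic curve is asserted, not proved; this is a nontrivial theorem even in the $\mathbb{S}^5$ case (proved in \cite{VT} under global hypotheses), and the paper never needs it as a separate step: it proves isotropy order by order, with $\Phi_1$ holomorphic by Codazzi alone and $\Phi_2$ holomorphic via Theorem \ref{ena} only \emph{after} $\Phi_1=0$ is established (constant eccentricity zero of the first ellipse). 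Second, the engine that kills each $\Phi_r$ is quantitative and specific: combining the identity $\Delta\log\bigl(\|\alpha_{r+1}\|^4-4^r(K_r^\perp)^2\bigr)=4(r+1)K$, valid where $\Phi_r\neq0$ by Proposition \ref{3i}(ii), with the metric condition $(\ast\ast)$, namely $\Delta\log(1-K)=6K-1$, the paper forms a quotient of absolute value type, e.g. $u_1=\bigl(\tfrac14\|\alpha_2\|^4-(K_1^\perp)^2\bigr)^3/(1-K)^4$, whose exponents are chosen so that the curvature terms cancel and $\Delta\log u_1$ equals a \emph{positive constant}; on a compact surface this forces $u_1\equiv0$ by the maximum principle (equivalently, Lemma \ref{forglobal} would give $N(u_1)<0$). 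The additive constant $-1$ in $(\ast\ast)$ is what makes the argument close, and your proposed ``integral identities plus a convexity argument'' never isolates this; without it there is no proof that $f$ is isotropic.

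For the endgame, the paper does not pass through twistor spaces: once $2$-isotropy is established it computes $K_2^{\ast}=1/2$ directly from $(\ast\ast)$ and Proposition \ref{3i}, and Proposition \ref{5} then yields $\alpha_4=0$, hence $n=6$; congruence follows from the congruence theorem of \cite{V}. Your twistor/Calabi route could in principle replace this last step, but as stated it presupposes that the induced metric of a fully isotropic surface determines the substantial codimension --- which is precisely the content of the $\alpha_4=0$ computation you would still have to carry out.
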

The same result holds if instead
of the compactness of the surface we assume that the surface is exceptional.

Finally, we deal with the third class of pseudoholomorphic curves in $\mathbb{S}^6,$
namely the nonisotropic ones. Under 
a global assumption on the Euler-Poincar\'{e} number of the second normal bundle (see Sections 2 and 3 for details),
we are able to prove the following result that provides a partial answer to our problem.

\begin{theorem}\label{compnon}
Let $g\colon M\to \mathbb{S}^6$ be a compact substantial pseudoholomorphic curve that is nonisotropic. If the Euler-Poincar\'{e} number of the second normal bundle of $g$ is nonzero,
then there are at most finitely many minimal surfaces in $\mathbb{S}^6$ isometric to
$g$ having the same normal curvatures with $g$. 
\end{theorem}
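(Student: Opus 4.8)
The plan is to work within the moving frame formalism for minimal surfaces in $\mathbb{S}^6$ and reduce the classification to controlling a single geometric function via a maximum-principle argument on the compact surface $M$. Since $g\colon M\to\mathbb{S}^6$ is a substantial nonisotropic pseudoholomorphic curve, it is $1$-isotropic but fails to be $2$-isotropic, so the second-order invariants (the Hopf differential associated to the second fundamental form and the curvature ellipse of the second order) carry nontrivial information. First I would record the structure equations of any minimal $f\colon M\to\mathbb{S}^6$ that is isometric to $g$ and shares its normal curvatures: the isometry fixes the induced metric and hence the Gaussian curvature $K$, and the equality of normal curvatures fixes the curvature of the first and second normal bundles. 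Because both $f$ and $g$ are $1$-isotropic minimal surfaces of the same isotropy type sharing these curvature data, the only freedom left lives in the higher-order Hopf differentials and in the "phase" relating the two second fundamental forms.

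**Reducing to a Hopf-differential / phase equation.**

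The key step is to isolate the invariant measuring the discrepancy between $f$ and $g$. Writing everything in a local complex coordinate $z$ adapted to the conformal structure, the third fundamental forms of $f$ and $g$ produce cubic Hopf differentials; since $g$ is pseudoholomorphic these are holomorphic, and the isometry together with the matched normal curvatures forces the corresponding differential of $f$ to have the same modulus. I would then show that the ratio of these two holomorphic cubic differentials is a holomorphic function of constant modulus, hence locally constant, so that $f$ is obtained from $g$ by rotating this Hopf differential by a phase $e^{i\varphi}$ with $\varphi$ a real constant on each connected component. This is essentially the mechanism behind Theorem~\ref{finiteorcircle}; the new ingredient here is the nonisotropy, which feeds into the integrability (Ricci/Codazzi) equations as an extra obstruction.

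**Exploiting the Euler–Poincaré hypothesis.**

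The main obstacle — and where the hypothesis that the Euler–Poincaré number $\chi(N_2)$ of the second normal bundle is nonzero enters decisively — is converting the local one-parameter phase freedom into a finiteness statement. The plan is to interpret the admissible phases $e^{i\varphi}$ as sections, or as monodromy data, of a flat line bundle built from the second normal bundle $N_2$ of $g$. Concretely, the second fundamental form of $f$ defines a global section of a bundle whose degree is pinned by $\chi(N_2)$; the equation of constant-modulus phase becomes a global compatibility condition, and the Gauss–Bonnet/Riemann–Roch accounting shows that a nontrivial continuous family of phases would force $\chi(N_2)=0$. Hence when $\chi(N_2)\neq0$ no circle of deformations can occur, and the moduli set is discrete. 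Since $M$ is compact, discreteness upgrades to finiteness.

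**Assembling the argument.**

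Finally I would combine these pieces: start from an arbitrary minimal $f\colon M\to\mathbb{S}^6$ isometric to $g$ with the same normal curvatures, apply the local analysis to see that $f$ differs from $g$ only by a locally constant phase in its higher Hopf differential, invoke the bundle-degree obstruction driven by $\chi(N_2)\neq0$ to rule out a genuine one-parameter family, and conclude via compactness that only finitely many congruence classes survive. The delicate point to get right is the precise identification of the relevant line bundle and the verification that its degree is governed by $\chi(N_2)$ rather than by $\chi(M)$ or by the first normal bundle; this is where the \emph{nonisotropic} hypothesis must be used quantitatively, since for isotropic curves (Theorem~\ref{forintro}) the analogous bundle trivializes and forces outright rigidity instead of mere finiteness.
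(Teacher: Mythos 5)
Your first two steps are consistent with what the paper establishes: the reduction to the dichotomy ``circle or finite set'' is exactly Theorem \ref{finiteorcircle}, and the phase description of the deformations is the associated-family picture. The gap is in the third step, where the hypothesis $\chi(N_2^g)\neq 0$ must do the work. Your proposed mechanism --- interpreting the admissible phases as monodromy data of a flat line bundle and asserting that a continuous family of phases would force the degree of that bundle to vanish --- is not substantiated: you never identify the line bundle, never explain why its degree is governed by $\chi(N_2^g)$ rather than by $\chi(M)$, and never exhibit the global compatibility condition that a circle of deformations would violate. Moreover, the closing remark that ``discreteness upgrades to finiteness by compactness of $M$'' is a non sequitur, since the moduli set is a subset of $[0,\pi)$, not of $M$; in the discrete case the finiteness already follows from the real-analyticity argument of Theorem \ref{finiteorcircle}, so nothing new is needed there.

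The paper's actual route to excluding the circle is quite different, and it is the ingredient your outline is missing. Assuming $\mathcal{M}^K_6(g)=[0,\pi)$, the coordinate functions of all the surfaces $g_\theta$ are eigenfunctions of the Laplacian of $M$ with eigenvalue $2$; if any finite collection $g_{\theta_1},\dots,g_{\theta_m}$ has linearly independent coordinate functions, this contradicts the finite-dimensionality of that eigenspace. The hypothesis $\chi(N_2^g)\neq0$ enters deep inside the linear-independence proof: from a putative relation $\sum_j\langle g_{\theta_j},v_j\rangle=0$ one builds, via the adapted frames of Lemma \ref{ksanauseful} and the Weingarten formulas, a bounded holomorphic function on $M$ minus the zero set of $\Phi_2$, which must therefore be constant, yielding $\psi(\kappa_2^2-\mu_2^2)=c\,\kappa_2^2$. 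By Lemma \ref{apoesch}, the condition $\chi(N_2^g)\neq0$ is equivalent to $N(a_2^+)\neq N(a_2^-)$, which guarantees the existence of a zero of $\Phi_2$ at which the normalized semi-axes coincide and hence forces $c=0$; the vanishing of $\psi$ then cascades through the structure equations to give $v_j=0$ for all $j$. If you want to repair your argument, replace the flat-bundle heuristic with this eigenfunction count and the holomorphic-function analysis that supports it.
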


The necessity of the assumption on the codimension and the global assumptions in the above theorem is justified by the fact that the
direct sums of the associated family of a simply connected nonisotropic pseudoholomorphic
curve $g\colon M\to\mathbb{S}^6$ are isometric to $g$ (see Remark \ref{remarknoniso}).

In addition, we prove the following theorem that may be viewed as analogous to the classical
result of Schur (see \cite[p. 36]{Chnew}) in the realm
of minimal surfaces in spheres.

\begin{theorem}\label{teleutaioPhi}
Let $g\colon M\to\mathbb{S}^6$ be a compact, nonisotropic and substantial pseudoholomorphic curve and $\hat{g}\colon M\to\mathbb{S}^n$ be a
substantial minimal surface that is isometric to $g.$ 
If $\hat{g}$ is not $2$-isotropic and the second normal curvatures $K_2^\perp, \hat{K}_2^\perp$ of the surfaces $g$ and $\hat{g}$
respectively satisfy the inequality $\hat{K}_2^\perp\le K_2^\perp,$ then $n=6.$ 
Moreover, the moduli space of all such noncongruent minimal surfaces $\hat{g}\colon M\to\mathbb{S}^6$
that are isometric to $g,$
is either a circle or a finite set.
\end{theorem}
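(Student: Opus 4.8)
The plan is to reduce the statement to Theorem~\ref{finiteorcircle} by proving, under the stated hypotheses, that any such $\hat g$ is in fact substantial in $\mathbb{S}^6$ (so $n=6$) and carries exactly the same normal curvatures as $g$. Once this is done, the description of the moduli space as a circle or a finite set is immediate from Theorem~\ref{finiteorcircle}. The heart of the argument is therefore a Schur type comparison that upgrades the pointwise inequality $\hat K_2^\perp\le K_2^\perp$ to a pointwise equality and simultaneously caps the codimension.

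First I would recall the extrinsic data attached to a minimal surface in a sphere: the sequence of curvature ellipses $\mathcal{E}_s$, their higher fundamental forms, and the normal curvatures $K_s^\perp$, which measure the twisting of the oriented plane bundles $N_s$ spanned by the ellipses. Two ingredients drive the proof. The first is a Gauss--Bonnet formula for the second normal bundle, $\int_M K_2^\perp\,dA=2\pi\,\tau$ and $\int_M \hat K_2^\perp\,dA=2\pi\,\hat\tau$, where $\tau,\hat\tau$ denote the Euler-Poincar\'{e} numbers of $N_2(g)$ and $N_2(\hat g)$. The second is a pointwise bound $K_2^\perp\le\mathcal{I}_2$ by an intrinsic quantity $\mathcal{I}_2$, built from the induced metric and the first order isotropy, with equality exactly at the points of $2$-isotropy. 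Since $g$ is pseudoholomorphic it is $1$-isotropic, so its first curvature ellipse is a circle and $K_1^\perp=1-K$; this rigidifies the first order data and makes $\mathcal{I}_2$ genuinely intrinsic, hence shared by $g$ and $\hat g$ through the isometry.

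With these in hand, integrating the hypothesis over the compact $M$ gives $\hat\tau\le\tau$. The key step is to produce the reverse inequality $\hat\tau\ge\tau$ from the special structure of the pseudoholomorphic curve $g$: since $g$ realizes an extremal relation between its second normal curvature and the common intrinsic datum $\mathcal{I}_2$, a degree comparison of the two second normal bundles forces their Euler-Poincar\'{e} numbers to coincide. Equality of the integrals, combined with $\hat K_2^\perp\le K_2^\perp$, then yields $\hat K_2^\perp=K_2^\perp$ pointwise, and running the same comparison at first order (using $\hat K_1^\perp\le 1-K=K_1^\perp$) gives $\hat K_1^\perp=K_1^\perp$; thus $\hat g$ has the same normal curvatures as $g$. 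Feeding this equality back into the second order Gauss equation shows that the third order curvature ellipse of $\hat g$ degenerates exactly as it does for $g$, so the osculating flag of $\hat g$ terminates at $N_2$ and $\hat g$ is substantial in $\mathbb{S}^6$, which is the assertion $n=6$. Theorem~\ref{finiteorcircle} then applies verbatim and gives the circle or finite set alternative (with Theorem~\ref{compnon} supplying finiteness in the case $\tau\neq 0$).

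The main obstacle I expect is precisely the reverse topological inequality $\hat\tau\ge\tau$, that is, bounding the integral $\int_M\hat K_2^\perp\,dA$ from below. The one-sided hypothesis $\hat K_2^\perp\le K_2^\perp$ controls it only from above, and the isometry a priori pins down only intrinsic quantities, not the Euler-Poincar\'{e} number of a higher normal bundle of a competitor of unknown codimension $n$. Overcoming this requires exploiting the full force of the hypotheses on $g$ (pseudoholomorphicity, nonisotropy, compactness and substantiality) to rule out a strictly smaller second normal bundle Euler number for $\hat g$; this is exactly where the nontriviality of the Schur type mechanism resides, and it is the step I would expect to demand the most care.
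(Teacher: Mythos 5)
There is a genuine gap: the central analytic step is not carried out, and the integral identity on which your strategy rests is not correct. You propose to integrate $\hat K_2^\perp\le K_2^\perp$ against the Gauss--Bonnet formula $\int_M K_2^\perp\,dA=2\pi\,\chi(N_2)$; but for \emph{higher} normal bundles the Euler--Poincar\'e number is computed from the intrinsic curvature $K_2^*$ of the plane bundle $N_2$ (Proposition \ref{5}), not from the normal curvature $K_2^\perp=2\kappa_2\mu_2$. These differ by terms involving $\Vert\alpha_2\Vert$, $K_1^\perp$ and $\Vert\alpha_4\Vert$, and the discrepancy is precisely where the codimension of $\hat g$ enters, so the inequality $\hat\tau\le\tau$ does not follow from the hypothesis. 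Moreover, you explicitly concede that the reverse inequality $\hat\tau\ge\tau$ is ``the main obstacle'' and offer no argument for it beyond an appeal to a ``degree comparison''; since that reverse inequality is exactly what would force equality and kill $\hat\alpha_4$, the proof is missing its decisive step.

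The paper's mechanism is different and worth contrasting. After showing $\hat g$ is $1$-isotropic (Proposition \ref{1iso}, so $\hat K_1^\perp=1-K$ comes for free, not from a separate first-order comparison) and $\Vert\hat\alpha_3\Vert=\Vert\alpha_3\Vert$, one forms the ratio of $a$-invariants $u=(a_2^-\hat a_2^+)/(a_2^+\hat a_2^-)$. Proposition \ref{3i}(ii) gives $\Delta\log u=2(K_2^*-\hat K_2^*)$ away from the zeros of the Hopf differentials, and Proposition \ref{5} rewrites this as
\[
\Delta\log u=\frac{2\Vert\alpha_2\Vert^2}{(K_1^\perp)^2}\bigl(K_2^\perp-\hat K_2^\perp\bigr)+\frac{\Vert\hat\alpha_4\Vert^2}{2\hat K_2^\perp}\ \ge\ 0 .
\]
A careful order-of-vanishing analysis (Lemma \ref{lemma9}) shows $u$ extends smoothly across the zero set, so by the maximum principle on the compact $M$ the function $u$ is constant; the vanishing of the right-hand side then yields $\hat K_2^\perp=K_2^\perp$ \emph{and} $\hat\alpha_4=0$ simultaneously, whence $n=6$ and Theorem \ref{finiteorcircle} applies. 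If you want to salvage your topological approach, the quantity to integrate is $K_2^*-\hat K_2^*$, but then you must control the extension across the singular set and the sign of each term --- which is exactly the content of the displayed identity and of Lemma \ref{lemma9}.
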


The paper is organized as follows: In Section 2, we fix the notation and give some
preliminaries. In Section 3, we recall the notion of Hopf differentials and some known results
about exceptional surfaces. In Section 4, we give some basic facts about absolute value type
functions, a notion that was introduced in \cite{EGT,ET} and will be exploited throughout the paper.
In Section 5, we recall some properties of
pseudoholomorphic curves in the nearly K{\"a}hler sphere $\mathbb{S}^{6}.$ 
 In Section 6, we investigate properties of the moduli space of noncongruent minimal surfaces, substantial in odd
dimensional spheres, that are isometric to a given
pseudoholomorphic
curve in $\mathbb{S}^{5}$ and give the proof of Theorem \ref{mikradim}. Section 7 is devoted to the case of isotropic pseudoholomorphic curves in $\mathbb{S}^{6}$ and give the proof of Theorem \ref{forintro}.
In the last section,  we deal with the study of nonisotropic
pseudoholomorphic curves in $\mathbb{S}^{6}$ and we give the proofs of Theorems \ref{finiteorcircle}, \ref{compnon} and  \ref{teleutaioPhi}. 

\section{Preliminaries}

In this section, we collect several facts and definitions about minimal surfaces in spheres.
For more details we refer to \cite{DF} and \cite{DV2k15}. 
\vspace{1,5ex}

Let $f\colon M\to\mathbb{S}^n$ be an isometric immersion of a $2$-dimensional
Riemannian manifold. The $k^{th}$\emph{-normal space} of $f$ at $p\in M$ for
$k\geq 1$ is defined as
$$
N^f_k(p)={\rm span}\left\{\alpha^f_{k+1}(X_1,\ldots,X_{k+1}):X_1,\ldots,X_{k+1}\in
T_pM\right\},
$$
where the symmetric tensor 
$$
\alpha^f_s\colon TM\times\cdots\times TM\to N_fM,\;\; s\geq 3, 
$$
given
inductively by
$$
\alpha^f_s(X_1,\ldots,X_s)=\left(\nabla^\perp_{X_s}\cdots\nabla^\perp_{X_3}
\alpha^f(X_2,X_1)\right)^\perp,
$$
is called the $s^{th}$\emph{-fundamental form} and $\alpha^f\colon TM\times TM\to N_fM$ stands for the standard second fundamental 
form of $f$ with values in the normal bundle. Here, $\nabla^{\perp}$ denotes the induced
connection in the normal bundle $N_fM$ of $f$ and $(\,\cdot\,)^\perp$ stands for the
projection onto the orthogonal complement of $N^f_1\oplus\cdots\oplus N^f_{s-2}$ in
$N_fM.$ If $f$ is minimal, then ${\rm dim}N^f_k(p)\le2$ for all $k\ge1$ and any $p\in M$ (cf. \cite{DF}).

A surface $f\colon M\to\mathbb{S}^n$ is called \emph{regular} if for each $k$ the
subspaces $N^f_k$ have constant dimension and thus form normal subbundles. Notice
that regularity is always verified  along connected components of an open dense subset of
$M.$ 

Assume that an immersion $f\colon M\to\mathbb{S}^n$ is minimal and substantial.
By the latter, we mean that $f(M)$ is not contained in any totally geodesic submanifold of $\mathbb{S}^n.$ In this case, the normal bundle
of $f$ splits along an open dense subset of $M$ as
$$
N_fM=N_1^f\oplus N_2^f\oplus\dots\oplus N_m^f,\;\;\; m=[(n-1)/2],
$$
since all higher normal bundles  have rank two except possible the last one that has rank
one if $n$ is odd; see \cite{Ch} or \cite{DF}. Moreover, if $M$ is oriented, then an
orientation is induced on each plane subbundle $N_s^f$  given by the ordered basis
$$
\alpha^f_{s+1}(X,\ldots,X),\;\;\;\alpha^f_{s+1}(JX,\ldots,X),
$$
where $0\neq X\in TM,$ and $J$ is the complex structure determined by the orientation and the metric.

If $f\colon M\to\mathbb{S}^n$ is a minimal surface, then at any point $p\in M$ and for each
$N_r^f$, $1\leq r\leq m$, the \emph{$r^{th}$-order curvature ellipse}
$\mathcal{E}^f_r(p)\subset N^f_r(p)$ is defined  by
$$
\mathcal{E}^f_r(p) = \left\{\alpha^f_{r+1}(Z^{\varphi},\ldots,Z^{\varphi})\colon\,
Z^{\varphi}=\cos\varphi Z+\sin\varphi JZ\;\mbox{and}\;\varphi\in[0,2\pi)\right\},
$$
where $Z\in T_xM$ is any vector of unit length.

A substantial regular surface $f\colon M\to\mathbb{S}^{n}$ is called
\emph{$s$-isotropic} if it is  minimal and  at any point $p\in M$ the  curvature ellipses
$\mathcal{E}^f_r(p)$ contained in all two-dimensional  $N^f_r$$\,{}^{\prime}$s 
are circles for any $1\le r\le s.$ It is called \emph{isotropic} if it is $s$-isotropic for any $s.$

The $r$-th \textit{normal curvature} $K_{r}^{\perp}$ of $f$ is defined by
\begin{equation*}
K_{r}^{\perp}={\frac{2}{\pi}}{\hbox {Area}}(\mathcal{E}^f\sb r).
\end{equation*}
If $\kappa _{r}\ge\mu _{r}\ge0$ denote the length of the semi-axes of the  curvature ellipse
$\mathcal{E}^f\sb r,$ then
\begin{equation}\label{elipsi}
K_{r}^{\perp}=2\kappa _{r}\mu _{r}.
\end{equation}
Clearly, the curvature ellipse $\mathcal{E}^f\sb r(p)$ at a point $p\in M$  is a circle if and only if $\kappa _{r}(p)= \mu_{r}(p).$

The \textit{eccentricity} $\varepsilon\sb r$ of the  curvature ellipse $\mathcal{E}^f\sb r$ is given by
\begin{equation*}
\varepsilon\sb r=\frac{\left(\kappa^{2}_{r}-\mu^{2}_{r}\right)^{1/2}}{\kappa_{r}},
\end{equation*}
where $\left(\kappa^{2}_{r}-\mu^{2}_{r}\right)^{1/2}$ is the distance from the center to a focus,
and can be thought of as a measure of how far $\mathcal{E}^f\sb r$ deviates from being a
circle.

The $a$-\textit{invariants} (see \cite{V16}) are the functions
\begin{equation*}
a^{\pm}_{r}= \kappa _{r}{\pm}  \mu _{r}= \left(2^{-r} \Vert \alpha^f_{r+1}
\Vert^{2} \pm K_{r}^{\perp}\right)^{1/2}.
\end{equation*}
These functions determine the geometry of the $r$-th curvature ellipse.

Denote by $\tau^o_f$ the index of the last plane bundle, in the orthogonal decomposition
of the normal bundle. Let $\{e_1,e_2\}$ be a local tangent orthonormal frame and
$\{e_\alpha\}$ be a local orthonormal frame of the normal bundle such that
$\{e_{2r+1},e_{2r+2}\}$ span $N_r^f$ for any $1\le r\le\tau^o_f$ and $e_{2m+1}$
spans the line bundle $N^f_{m+1}$ if $n=2m+1.$ For any $\alpha=2r+1$ or $\alpha=2r+2,$ we set 
$$
h_1^{\alpha}=\langle
\alpha^f_{r+1}(e_1,\dots,e_1),e_{\alpha}\rangle,{\ }h_2^{\alpha}=\langle
\alpha^f_{r+1}(e_1,\dots,e_1,e_2),e_{\alpha}\rangle,
$$
where $\langle\cdot,\cdot\rangle$ is the standard metric of $\mathbb{S}^n.$ Introducing the complex valued functions
$$
H_{\alpha}=h_1^{\alpha}+ih_2^{\alpha}\;\;\text{for any}\;\;\alpha=2r+1\;\;\text{or}\;\;\alpha=2r+2,
$$
it is not hard to verify that the $r$-th normal
curvature is given by
\begin{equation}\label{prwthsxeshden}
K_r^{\perp}=i\left(H_{2r+1}{\overline{H}_{2r+2}}-{\overline{H}_{2r+1}}
H_{2r+2}\right).
\end{equation}

The length of the $(r+1)$-th fundamental form $\alpha^f_{r+1}$ is given by
\begin{equation}\label{deuterhsxeshden}
\Vert \alpha^f_{r+1}\Vert ^2=2^r\big(|{H_{2r+1}}|^2+|{H_{2r+2}}|
^2\big),
\end{equation}
or equivalently (cf. \cite{A})
\begin{equation}\label{si}
\Vert \alpha^f_{r+1}\Vert ^2=2^r(\kappa_r^2+\mu_r^2).
\end{equation}
In particular, it follows from the Gauss equation that 
\begin{equation}\label{Gausseqa2}
\Vert \alpha^f_{2}\Vert ^2=2(1-K).
\end{equation}

\smallskip
Each plane
subbundle $N_r^f$ inherits a Riemannian connection from that of the normal bundle. Its
\textit{intrinsic curvature} $K^*_r$ is given by the following proposition (cf. \cite{A}).

\begin{proposition}\label{5}
\textit{The intrinsic curvature $K_r^{\ast}$ of each plane subbundle $N_{r}^f$
of a minimal surface} $f\colon M\to \mathbb{S}^{n}$ \textit{is given by} 
\begin{equation*}
K_{1}^{\ast}=K_1^{\perp}-{\frac{\Vert \alpha^f_3\Vert ^2}{2K_1^{\perp}}} 
\;\; \text{and}\;\; K_r^{\ast}={\frac{K_r^{\perp}}{(K_{r-1}^{\perp})^2}}{\frac{
\Vert \alpha^f_{r}\Vert ^{2}}{2^{r-2}}}-{\frac{\Vert \alpha^f_{r+2}\Vert ^2}{
2^rK_r^{\perp}}}\;\;\text{for}\;\;2\leq r\leq \tau_f^o.
\end{equation*}
\end{proposition}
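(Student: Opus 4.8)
The plan is to read off $K_r^*$ from the Gauss equation for the inclusion of the plane subbundle $N_r^f$ into the full normal bundle $N_fM$. Since the connection inherited by $N_r^f$ is just the $N_r^f$-component of $\nabla^\perp$, its curvature equals the genuine normal (Ricci) curvature $\langle R^\perp(e_1,e_2)e_{2r+1},e_{2r+2}\rangle$ corrected by the second fundamental form $B$ of the subbundle, where $B(e_i,\cdot)$ is $\nabla^\perp_{e_i}(\,\cdot\,)$ projected onto the orthogonal complement of $N_r^f$ in $N_fM$. Because $f$ is minimal, $\nabla^\perp$ carries $N_r^f$ only into $N_{r-1}^f\oplus N_r^f\oplus N_{r+1}^f$, so $B$ splits into a downward part valued in $N_{r-1}^f$ and an upward part valued in $N_{r+1}^f$, and the Gauss equation reduces to $K_r^*=\langle R^\perp(e_1,e_2)e_{2r+1},e_{2r+2}\rangle-(\text{downward term})-(\text{upward term})$.

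First I would dispose of the Ricci term. The shape operator of $f$ takes values determined by $\alpha^f=\alpha_2^f\in N_1^f$, so for $r\ge 2$ the operators $A_{e_{2r+1}},A_{e_{2r+2}}$ vanish and the normal curvature of $N_r^f$ inside $N_fM$ is zero, whereas for $r=1$ the classical identity gives $\langle R^\perp(e_1,e_2)e_3,e_4\rangle=K_1^\perp$. Moreover $N_1^f$ has no lower neighbour inside $N_fM$, so its $B$ is purely upward. This already isolates the bare $K_1^\perp$ summand of the first formula, explains the absence of any such summand for $r\ge 2$, and shows why the first formula carries only one correction term.

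Next I would compute the upward coupling. The transition forms $\langle\nabla^\perp_{e_i}e_{2r+1},e_{2r+3}\rangle,\dots$ are obtained by differentiating $\alpha_{r+1}^f(e_1,\dots,e_1)=h_1^{2r+1}e_{2r+1}+h_1^{2r+2}e_{2r+2}$ and projecting onto $N_{r+1}^f$; by the inductive definition of the fundamental forms this projection reproduces $\alpha_{r+2}^f$. Solving for the transition forms thus amounts to inverting the $2\times 2$ matrix formed by the components $h_i^{2r+1},h_i^{2r+2}$, whose determinant is, up to sign, the normal curvature $K_r^\perp$ in view of \eqref{prwthsxeshden}. Substituting back into the upward term and simplifying with \eqref{deuterhsxeshden} and \eqref{si} yields precisely $\|\alpha_{r+2}^f\|^2/(2^rK_r^\perp)$, accounting for the last summand in both formulas.

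The technical heart, and the main obstacle, is the downward coupling to $N_{r-1}^f$. One must express the forms $\langle\nabla^\perp_{e_i}e_{2r+1},e_{2r-1}\rangle=-\langle\nabla^\perp_{e_i}e_{2r-1},e_{2r+1}\rangle$, that is, the adjoint of the transition carrying $N_{r-1}^f$ up to $N_r^f$, in terms of $\alpha_{r+1}^f$ and the ellipse data of the previous plane; the analogous inversion now introduces $1/(K_{r-1}^\perp)^2$, while the leading direction of $\alpha_r^f$ contributes $\|\alpha_r^f\|^2/2^{r-2}$. The delicate points are twofold. Since the Gauss correction is the antisymmetric (wedge) pairing $\langle B(e_1,\cdot),B(e_2,\cdot)\rangle-\langle B(e_2,\cdot),B(e_1,\cdot)\rangle$, it measures an oriented area rather than a squared norm, and it is exactly this antisymmetry that reinstates the extra factor $K_r^\perp$ in the numerator of the downward term and forces the downward and upward contributions to enter with opposite signs; this sign must be pinned down through the orientation conventions on each $N_s^f$ and the complex structure $J$. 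The remaining work is the careful elimination of the intermediate frame quantities in favour of the invariants $K_{r-1}^\perp,K_r^\perp,\|\alpha_r^f\|^2$ by means of \eqref{prwthsxeshden}--\eqref{si}, where any normalization slip would corrupt the precise coefficient $\tfrac{K_r^\perp}{(K_{r-1}^\perp)^2}\tfrac{\|\alpha_r^f\|^2}{2^{r-2}}$.
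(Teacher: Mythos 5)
The paper offers no proof of this proposition: it is quoted from Asperti's paper \cite{A}, and your route is essentially the computation carried out there --- the Gauss equation for the subbundle $N_r^f\subset N_fM$, the Ricci equation to show the ambient term vanishes for $r\ge 2$ and equals $K_1^\perp$ for $r=1$, and the inversion of the $2\times 2$ systems coming from the inductive definition of the higher fundamental forms to express the transition forms in terms of $\alpha^f_r$, $\alpha^f_{r+1}$, $\alpha^f_{r+2}$. Your attribution of each term is right, including the key point that the antisymmetry of the subbundle Gauss correction is what puts the determinant $K_r^\perp$ (rather than a norm) in the numerator of the downward contribution over $(K_{r-1}^\perp)^2$. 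Bear in mind that what you have is a correct skeleton rather than a finished proof: the explicit solution of the linear systems and the reduction via \eqref{prwthsxeshden}--\eqref{si} are exactly where the coefficients $2^{r-2}$, $2^r$ and the signs get pinned down, and a useful consistency check once you do it is that the general formula at $r=1$, read with $N_0^f=TM$, $\Vert\alpha^f_1\Vert^2=2$ and $K_0^\perp=2$, collapses to the stated expression for $K_1^{\ast}$.
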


Let $f\colon M\to\mathbb{S}^n$ be a minimal isometric immersion. If $M$ is
simply connected, there exists a one-parameter \emph{associated family} of minimal
isometric immersions $f_\theta\colon M\to\mathbb{S}^n,$ where  $\theta\in\mathbb{S}^1=[0,\pi).$ To see this, for each
$\theta\in\mathbb{S}^1$ consider the orthogonal parallel tensor field 
$$
J_{\theta}=\cos\theta I+\sin\theta J,
$$
where $I$ is the identity endomorphism of the tangent bundle and $J$ is the complex
structure of $M$ induced by
the metric and the orientation.  Then, the symmetric section $\alpha^f(J_\theta\cdot,
\cdot)$ of the bundle $\text{Hom}(TM\times TM,N_f M)$ satisfies the Gauss, Codazzi and
Ricci equations, with respect to the same normal connection; see \cite{DG2} for details. 
Therefore, there exists a minimal isometric immersion  $f_{\theta}\colon M\to
\mathbb{S}^n$ whose second fundamental form is given by
\begin{equation*}
\alpha^{f_{\theta}}(X,Y)=T_\theta\alpha^f(J_{\theta}X,Y),
\end{equation*}
where $T_\theta\colon N_fM\to N_{f_{\theta}}M$ is a parallel 
vector bundle isometry that identifies the normal subspaces
$N_s^f$ with $N_s^{f_\theta}$, $s\geq 1.$

\section{Hopf differentials and Exceptional surfaces}

Let $f\colon M\to\mathbb{S}^n$ be a minimal surface. The complexified tangent bundle
$TM\otimes \mathbb{C}$ is decomposed into the eigenspaces $T^{\prime}M$ and $T^{\prime \prime}M$ of the complex structure
$J$,  corresponding to the
eigenvalues $i$ and $-i.$  The $(r+1)$-th fundamental form $\alpha^f_{r+1}$, which takes values in
the normal subbundle $N_{r}^f$, can be complex linearly extended to $TM\otimes
\mathbb{C}$ with values in the complexified vector bundle $N_{r}^f\otimes \mathbb{C}$
and then decomposed into its $(p,q)$-components, $p+q=r+1,$ which are tensor
products of $p$ differential 1-forms vanishing on $T^{\prime \prime}M$ and $q$ differential 1-forms vanishing
on $T^{\prime}M.$ The minimality of $f$ is equivalent to the vanishing of the
$(1,1)$-part of the second fundamental form. Hence, the $(p,q)$-components of $\alpha^f_{r+1}$
vanish unless $p=r+1$ or $p=0,$ and consequently for a local complex coordinate $z$
on $M$, we have the following decomposition 
\begin{equation*}
\alpha^f_{r+1}=\alpha_{r+1}^{(r+1,0)}dz^{r+1}+\alpha_{r+1}^{(0,r+1)}d\bar{z}^
{r+1},
\end{equation*}
where 
\begin{equation*}
\alpha_{r+1}^{(r+1,0)}=\alpha^f_{r+1}(\partial,\dots,\partial),\;\;\alpha_{r+1}^{(0,r+1)}=\overline{\alpha_{r+1}^{(r+1,0)}}\;\;\;\text{and}\;\;\;\partial ={\frac{1}{2}}\big({\frac{\partial}{\partial x}}-i{\frac{\partial}{\partial y}}\big).
\end{equation*}

The \textit{Hopf differentials} are the differential forms (see \cite{V})
\begin{equation*}
\Phi _{r}=\langle \alpha_{r+1}^{(r+1,0)},\alpha_{r+1}^{(r+1,0)}\rangle dz^{2r+2}
\end{equation*}
of type $(2r+2,0),r=1,\dots,[(n-1)/2],$ where $\langle \cdot,\cdot\rangle$ denotes the extension of the usual Riemannian metric
of $\mathbb{S}^n$ to a complex bilinear form. These forms are
defined on the open subset where  the minimal surface is regular and are independent of the choice of coordinates, while
$\Phi _{1}$ is globally well defined.

Let $\{e_1,e_2\}$ be a local orthonormal frame in the tangent bundle.
It will be convenient to use complex vectors, and we put
\begin{equation*}
\text{ }E=e_1-ie_2\;\;
\text{and}\;\; \phi =\omega _{1}+i\omega _2,
\end{equation*}
where $\{\omega_1,\omega_2\}$ is the dual frame. We choose a local complex
coordinate $z=x+iy$ such that $\phi =Fdz.$

From the definition of Hopf differentials, we easily obtain 
\begin{equation*}
\Phi _{r}={\frac{1}{4}}\left({\overline{H}_{2r+1}^2}+{\overline{H}_{2r+2}^2}
\right) \phi^{2r+2}.
\end{equation*}

Moreover, using (\ref{prwthsxeshden}) and (\ref{deuterhsxeshden}), we find that
\begin{equation}\label{what}
\left\vert \langle \alpha_{r+1}^{(r+1,0)},\alpha_{r+1}^{(r+1,0)}\rangle \right\vert
^2=\frac{F^{2r+2}}{2^{2r+4}}\left(\Vert \alpha^f_{r+1}\Vert
^4-4^r(K_r^{\perp})^2\right).
\end{equation}
Thus, the zeros of $\Phi _r$ are precisely the points where the $r$-th curvature ellipse $
\mathcal{E}^f\sb r$ is a circle. Moreover, using \eqref{elipsi} and \eqref{si} we obtain the following: 
 \begin{lemma}\label{systematic}
Let $f\colon M\to\mathbb{S}^{n}$ be a minimal surface. Then the following assertions are equivalent:

(i) The surface $f$ is $s$-isotropic.

(ii) The Hopf differentials satisfy $\Phi_r=0$ for any $1\le r\le s.$

(iii) The length of the $(r+1)$-th fundamental form $\alpha^f_{r+1}$ and the $r$-th normal curvature $K_{r}^{\perp}$
satisfy
\begin{equation*}
\Vert \alpha^f_{r+1}\Vert ^2=2^rK_{r}^{\perp},
\end{equation*}
for any $1\le r\le s.$ In particular, the surface $f$ is 1-isotropic if and only if
the first normal curvature $K_{1}^{\perp}$ satisfies
\begin{equation*}
K_{1}^{\perp}=1-K.
\end{equation*}

\end{lemma}

The Codazzi equation implies that $\Phi _{1}$ is always holomorphic (cf. \cite{Ch,ChW}).
Besides $\Phi_1$, the rest Hopf differentials are not always holomorphic. The following
characterization of the holomorphicity of Hopf differentials was given in \cite{V08}, in
terms of the eccentricity of curvature ellipses of higher order. 

\begin{theorem}\label{ena}
Let $f\colon M\to \mathbb{S}^n$ be a minimal surface. Its Hopf differentials
$\Phi _{2},\dots,\Phi_{r+1}$ are holomorphic if and only if the higher curvature ellipses
have constant eccentricity up to order $r.$
\end{theorem}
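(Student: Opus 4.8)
The plan is to turn holomorphicity of each $\Phi_r$ into a single scalar equation and then identify its obstruction with the variation of the curvature ellipse one order lower. Fix a local complex coordinate $z$, write $\Phi_r=p_r\,dz^{2r+2}$ with $p_r=\langle\beta_r,\beta_r\rangle$ and $\beta_r:=\alpha_{r+1}^{(r+1,0)}=\alpha^f_{r+1}(\partial,\dots,\partial)\in N^f_r\otimes\mathbb{C}$. Since $dz^{2r+2}$ is a local holomorphic section, $\Phi_r$ is holomorphic if and only if $\partial_{\bar z}p_r=0$. Because the induced metric is conformal one has $\nabla_{\partial_{\bar z}}\partial=0$, so differentiation passes cleanly onto the normal factor and $\partial_{\bar z}p_r=2\langle\nabla^\perp_{\bar\partial}\beta_r,\beta_r\rangle$. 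The first structural input is that, by minimality, $\alpha^f_{r+2}$ has only the pure types $(r+2,0)$ and $(0,r+2)$; hence $\nabla^\perp_{\bar\partial}\beta_r$ has vanishing $N^f_{r+1}$-component and takes values in $(N^f_{r-1}\oplus N^f_r)\otimes\mathbb{C}$. As $\beta_r\perp N^f_{r-1}$, only the $N^f_r$-component of $\nabla^\perp_{\bar\partial}\beta_r$ survives in $\partial_{\bar z}p_r$.

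To compute that $N^f_r$-component I would use the recursion $\beta_r=(\nabla^\perp_\partial\beta_{r-1})^{N^f_r}$ together with the commutation rule $\nabla^\perp_{\bar\partial}\nabla^\perp_\partial\beta_{r-1}-\nabla^\perp_\partial\nabla^\perp_{\bar\partial}\beta_{r-1}=R^\perp(\partial_{\bar z},\partial_z)\beta_{r-1}$. This is the crucial point where the order-$(r-1)$ geometry enters: the normal curvature term $R^\perp(\partial_{\bar z},\partial_z)\beta_{r-1}$ is exactly the Ricci-equation datum underlying Proposition \ref{5}, and together with the coupling coefficients of $\nabla^\perp_\partial\beta_{r-1}$ along $\beta_{r-1}$ and $\bar\beta_{r-1}$ it records the shape of $\mathcal{E}^f_{r-1}$. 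Expanding the $N^f_r$-component in the frame $\{\beta_r,\bar\beta_r\}$ (where $\mathcal{E}^f_r$ is a genuine ellipse), the part proportional to $\beta_r$ is the $(0,1)$-connection of the complex line through $\beta_r$ and contributes only a multiple of $p_r$, while the part proportional to $\bar\beta_r$ is the geometric obstruction. Using metric compatibility and the base identity $\nabla^\perp_{\bar\partial}\beta_1=0$ (equivalently the classical holomorphicity of $\Phi_1$), I expect the computation to collapse, under the inductive hypothesis that $\Phi_2,\dots,\Phi_{r-1}$ are already holomorphic, to an identity of the schematic form $\partial_{\bar z}\Phi_r=(\text{nonvanishing factor})\cdot\Phi_{r-1}\cdot\partial\big(\varphi(\varepsilon_{r-1})\big)$, where $\varphi$ is a fixed strictly monotone function of the eccentricity, expressible through the $a$-invariants $a^{\pm}_{r-1}=\kappa_{r-1}\pm\mu_{r-1}$.

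Granting such an identity, the equivalence follows by induction on $r$, with base case the classical holomorphicity of $\Phi_1$. In the direct implication, holomorphicity of $\Phi_r$ gives $\partial_{\bar z}\Phi_r=0$; on the dense open set where $\mathcal{E}^f_{r-1}$ is not a circle we have $\Phi_{r-1}\neq0$, forcing $\partial\varphi(\varepsilon_{r-1})=0$ and hence $\varepsilon_{r-1}$ locally constant, while at points where $\mathcal{E}^f_{r-1}$ is a circle one has $\varepsilon_{r-1}=0$, so $\varepsilon_{r-1}$ is a single constant on the connected regular set and extends by continuity. Conversely, if $\varepsilon_{r-1}$ is constant the right-hand side vanishes and $\Phi_r$ is holomorphic on the regular set, extending across the isolated non-regular points by removable singularity. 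Stacking orders $1$ through $r$ yields that $\Phi_2,\dots,\Phi_{r+1}$ are holomorphic exactly when $\mathcal{E}^f_1,\dots,\mathcal{E}^f_r$ have constant eccentricity.

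The main obstacle I anticipate is precisely the middle step: producing the explicit structure equation for $\partial_{\bar z}\Phi_r$ and verifying that, after eliminating the gauge term and invoking the inductive hypothesis, the geometric obstruction depends only on the order-$(r-1)$ eccentricity rather than on order-$r$ data. This forces one to combine the higher Codazzi equations with the Ricci equation for the consecutive normal planes $N^f_{r-1},N^f_r$ and to track carefully the coupling coefficients; choosing a normal frame adapted to the flag to kill the connection terms is what makes the monotone function $\varphi$ of $\varepsilon_{r-1}$ emerge cleanly. A secondary technical matter is the passage between the regular open dense set and the whole of $M$, handled through the isolatedness of the zeros of the holomorphic Hopf differentials and removable-singularity arguments.
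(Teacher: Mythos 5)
First, a remark on the comparison itself: the paper does not prove Theorem \ref{ena} --- it is imported verbatim from \cite{V08} --- so there is no internal proof to measure you against. The closest the paper comes is the adapted-frame machinery of Lemma \ref{ksanauseful} and the structure equations quoted from the proof of \cite[Proposition 4]{V08}, namely $d\overline{H}_{2r+1}-(r+1)i\overline{H}_{2r+1}\omega_{12}-\overline{H}_{2r+2}\,\omega_{2r+1,2r+2}\equiv 0 \;\mathrm{mod}\;\phi$ together with the Ricci-equation expression $\omega_{2r+1,2r+2}=\frac{\kappa_r\mu_r}{\kappa_r^2-\mu_r^2}*d\log\frac{\mu_r}{\kappa_r}$; your invariant formulation via $\beta_r=\alpha_{r+1}^{(r+1,0)}$ is a coordinate-free repackaging of exactly that moving-frames computation, and your setup (conformality giving $\nabla_{\bar\partial}\partial=0$, minimality killing the $N^f_{r+1}$-component, the bilinear pairing killing the $N^f_{r-1}$-component) is all correct.

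The genuine gap is that the pivotal identity is asserted, not proved. You write that you ``expect the computation to collapse'' to $\partial_{\bar z}\Phi_r=(\text{nonvanishing factor})\cdot\Phi_{r-1}\cdot\partial\bigl(\varphi(\varepsilon_{r-1})\bigr)$ and you yourself flag producing this structure equation as ``the main obstacle.'' But that identity \emph{is} the theorem: once it is in hand, the induction, the unique-continuation argument on the dense set where $\Phi_{r-1}\neq0$, and the removable-singularity bookkeeping are routine. Everything hinges on actually carrying out the expansion of $(\nabla^\perp_{\bar\partial}\beta_r)^{N^f_r}$ via the recursion $\beta_r=(\nabla^\perp_\partial\beta_{r-1})^{N^f_r}$, the commutator $R^\perp(\bar\partial,\partial)$, and the solution of the $2\times2$ linear system expressing $(\nabla^\perp_\partial\beta_{r-1})^{N^f_{r-1}}$ in the (generically nondegenerate) frame $\{\beta_{r-1},\bar\beta_{r-1}\}$; only then does one see that the coefficient of $\beta_r$ vanishes identically (it is not merely a ``gauge term to be eliminated'' --- a term $A\,p_r$ in $\partial_{\bar z}p_r$ would otherwise give holomorphic \emph{type} rather than holomorphicity), and that the coefficient of $\bar\beta_r$ equals, up to nonvanishing factors, $p_{r-1}\cdot\partial_z\log\bigl((\kappa_{r-1}^2+\mu_{r-1}^2)/(\kappa_{r-1}^2-\mu_{r-1}^2)\bigr)$. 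I checked that this does work out at the first nontrivial order $r=2$, so your strategy is sound; but as written the argument is a plan whose central step is conjectural, and it also glosses over the degeneration of the frame $\{\beta_{r-1},\bar\beta_{r-1}\}$ along the (a priori not isolated) set where $\mu_{r-1}=0$, which must be controlled before the unique-continuation step can be applied.
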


A minimal surface in $\mathbb{S}^n$ is called \textit{$r$-exceptional} if all Hopf
differentials up to order $r+1$ are holomorphic, or equivalently if all higher curvature
ellipses up to order $r$ have constant eccentricity. A minimal surface in $\mathbb{S}^n$
is called \textit{exceptional} if it is $r$-exceptional for $r=[(n-1)/2-1].$ 
This class of minimal surfaces may be viewed as the next simplest to superconformal ones.
In fact, superconformal minimal surfaces are indeed exceptional, characterized by the fact that all
Hopf differentials vanish up to the
last but one, which is equivalent to the fact that all higher curvature ellipses are circles up
to the last but one. As a matter of fact, there is an abundance of exceptional surfaces.

We recall some results for exceptional surfaces proved in \cite{V08}, that will be used in
the proofs of our main results.

\begin{proposition}\label{3i}
Let $f\colon M\to\mathbb{S}^n$ be an $(r-1)$-exceptional surface. At regular
points the following hold:

(i) For any $1\leq s\leq r-1,$ we have
\begin{equation*}
\Delta \log \left\Vert \alpha_{s+1}\right\Vert ^2=2\big((s+1)K-K_s^{\ast}\big),
\end{equation*}
where $\Delta $ is the Laplacian operator with respect to the induced metric $ds^{2}.$

(ii) If $\Phi _{r}\neq 0$\textit{, then}
\begin{equation*}
\Delta \log \left(\left\Vert \alpha_{r+1}\right\Vert^2+2^rK_r^{\perp}\right) =2\big((r+1)K-K_r^{\ast}\big)
\end{equation*}
and 
\begin{equation*}
\Delta \log \left(\left\Vert \alpha_{r+1}\right\Vert^2-2^rK_r^{\perp}\right) =2\big((r+1)K+K_r^{\ast}\big).
\end{equation*}

(iii) If $\Phi _{r}=0$\textit{, then}
\begin{equation*}
\Delta \log \left\Vert \alpha_{r+1}\right\Vert^2=2\big((r+1)K-K_r^{\ast}\big).
\end{equation*}

(iv) The intrinsic curvature of the $s$-th
normal bundle $N_s^f$\textit{\ is} $K_{s}^{\ast}=0$ 
if $1\leq s\leq r-1$ and $\Phi _s\neq 0.$
\end{proposition}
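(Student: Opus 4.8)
The plan is to reduce all four statements to the computation of $\Delta\log(a^{+}_{s})^{2}$ and $\Delta\log(a^{-}_{s})^{2}$ and to extract these Laplacians from the holomorphicity of the Hopf differentials together with the Codazzi and Ricci equations. Since $2^{s}(a^{\pm}_{s})^{2}=\Vert\alpha_{s+1}\Vert^{2}\pm2^{s}K^{\perp}_{s}$ and $\Delta\log c=0$ for every positive constant $c$, the quantities appearing in (i)--(iii) are, up to constants, the $(a^{\pm}_{s})^{2}$. I would fix a local complex coordinate $z$ with $\phi=F\,dz$, so that the induced metric is $|F|^{2}|dz|^{2}$, $\Delta=4|F|^{-2}\partial\bar{\partial}$, and the Gauss equation reads $\Delta\log|F|^{2}=-2K$. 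Writing $A_{s}=H_{2s+1}+iH_{2s+2}$ and $B_{s}=H_{2s+1}-iH_{2s+2}$, equations \eqref{prwthsxeshden} and \eqref{deuterhsxeshden} together with the definition of the $a$-invariants give $|A_{s}|=a^{-}_{s}$ and $|B_{s}|=a^{+}_{s}$, while the coefficient of $\Phi_{s}$ is $\tfrac14\overline{A_{s}B_{s}}F^{2s+2}$; equivalently, by \eqref{what}, the density of $\Phi_{s}$ equals $\tfrac14 a^{+}_{s}a^{-}_{s}|F|^{2s+2}$.

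The easy half is the sum. Since $f$ is $(r-1)$-exceptional, $\Phi_{s}$ is holomorphic for every $1\le s\le r$, so on the open set where $\Phi_{s}\neq0$ its coefficient is a nonvanishing holomorphic function and the logarithm of its modulus is harmonic. Writing this out with the density above and $\Delta\log|F|^{2}=-2K$ gives
\[\Delta\log(a^{+}_{s})^{2}+\Delta\log(a^{-}_{s})^{2}=4(s+1)K.\]
It remains to establish the difference $\Delta\log(a^{+}_{s})^{2}-\Delta\log(a^{-}_{s})^{2}=-4K^{\ast}_{s}$.

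For the difference I would decompose $\alpha_{s+1}^{(s+1,0)}=\xi_{+}\oplus\xi_{-}$ along the orthogonal splitting $N^{f}_{s}\otimes\mathbb{C}=L^{+}_{s}\oplus L^{-}_{s}$ into the $\pm i$-eigenbundles of the complex structure of the normal plane bundle (oriented as in Section 2), so that $|\xi_{\pm}|=(\mathrm{const})\,|F|^{s+1}a^{\pm}_{s}$. The cleanest situation is $\Phi_{s}=0$: then $\mathcal{E}^{f}_{s}$ is a circle, $\xi_{-}=0$, and the Codazzi equation (which forces the $N^{f}_{s}$-component of $\bar{\partial}\alpha_{s+1}^{(s+1,0)}$ to vanish) makes $\xi_{+}$ a holomorphic section of the line bundle $L^{+}_{s}$; the Poincar\'{e}--Lelong formula together with the Ricci equation, whose curvature term is $K^{\ast}_{s}$ in the chosen orientation, gives $\Delta\log|\xi_{+}|^{2}=-2K^{\ast}_{s}$, and hence, after removing the factor $|F|^{s+1}$ by means of $\Delta\log|F|^{2}=-2K$, we obtain $\Delta\log(a^{+}_{s})^{2}=2\bigl((s+1)K-K^{\ast}_{s}\bigr)$. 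For $s=r$ this is (iii), since then $\Vert\alpha_{r+1}\Vert^{2}=2^{r-1}(a^{+}_{r})^{2}$. When $\Phi_{s}\neq0$ both $\xi_{\pm}$ are nonzero and are coupled through the second fundamental form of the splitting, so they are no longer individually holomorphic. The main obstacle is to show that these off-diagonal contributions cancel in $\Delta\bigl(\log|\xi_{+}|^{2}-\log|\xi_{-}|^{2}\bigr)$ once the holomorphicity of $\Phi_{s}$ is imposed, leaving only the curvatures of $L^{\pm}_{s}$, equal to $\pm K^{\ast}_{s}$; this cancellation is a moving-frame computation and is the technical heart of the argument. Granting it, the difference formula holds and, combined with the sum, yields $\Delta\log(a^{\pm}_{s})^{2}=2\bigl((s+1)K\mp K^{\ast}_{s}\bigr)$ on $\{\Phi_{s}\neq0\}$, in particular (ii) for $s=r$.

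Finally I turn to (iv) and (i) for $1\le s\le r-1$. Here $\Phi_{s+1}$ is holomorphic, so Theorem \ref{ena} gives that $\mathcal{E}^{f}_{s}$ has constant eccentricity. If this constant is nonzero then $\Phi_{s}$ is nowhere zero, the ratio $a^{-}_{s}/a^{+}_{s}$ is constant, and therefore $\Delta\log(a^{-}_{s})^{2}=\Delta\log(a^{+}_{s})^{2}$; comparing with the two formulas of the previous paragraph forces $4K^{\ast}_{s}=0$, which is (iv), and moreover
\[\Delta\log\Vert\alpha_{s+1}\Vert^{2}=\Delta\log\bigl(2^{s-1}\bigl((a^{+}_{s})^{2}+(a^{-}_{s})^{2}\bigr)\bigr)=\Delta\log(a^{+}_{s})^{2}=2(s+1)K=2\bigl((s+1)K-K^{\ast}_{s}\bigr),\]
which is (i). If instead the constant eccentricity is zero, then $\Phi_{s}\equiv0$, $\mathcal{E}^{f}_{s}$ is a circle, and the degenerate computation of the preceding paragraph applies verbatim, again giving $\Delta\log\Vert\alpha_{s+1}\Vert^{2}=2\bigl((s+1)K-K^{\ast}_{s}\bigr)$. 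In both cases (i) holds on all regular points.
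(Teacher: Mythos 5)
A preliminary remark: the paper does not prove Proposition \ref{3i} at all --- it is quoted verbatim from \cite{V08} --- so your attempt has to be measured against the argument given there. Your reduction of all four statements to the Laplacians of the $a$-invariants is the right move, and the ``sum'' identity
$\Delta\log(a_s^+)^2+\Delta\log(a_s^-)^2=4(s+1)K$
is correctly and completely derived from the holomorphicity of $\Phi_s$, the factorization $|f_s|=\tfrac14\,a_s^+a_s^-|F|^{2s+2}$ and $\Delta\log|F|^2=-2K$; this is indeed one half of the standard proof, and the consequences you draw from the sum alone (e.g.\ $\Delta\log\Vert\alpha_{s+1}\Vert^2=2(s+1)K$ when the eccentricity is constant and nonzero) are sound.

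The genuine gap is the ``difference'' identity $\Delta\log(a_s^+)^2-\Delta\log(a_s^-)^2=-4K_s^{\ast}$, which you state and then explicitly concede (``Granting it, \dots''). This is not a loose end: it carries the entire content of (ii) and (iv) and of the $K_s^{\ast}$-terms in (i) and (iii), so without it the proposition is simply not proved. Worse, the one justification you do offer --- that ``the Codazzi equation forces the $N_s^f$-component of $\bar\partial\alpha_{s+1}^{(s+1,0)}$ to vanish'' --- is automatic only for $s=1$; for $s\ge2$ that component is exactly the obstruction to the holomorphicity of $\Phi_s$ (Theorem \ref{ena}), so it cannot be invoked as a general fact, and your treatment of the circular case inherits the same gap. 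What the source actually does is a moving-frames computation: setting $k_s^{\pm}=\overline{H}_{2s+1}\pm i\overline{H}_{2s+2}$ (so that $a_s^{\pm}=|k_s^{\pm}|$, as in Section 8 of this paper), the structure equations under the constant-eccentricity hypothesis take the form
\begin{equation*}
dk_s^{\pm}-(s+1)\,i\,k_s^{\pm}\,\omega_{12}\pm i\,k_s^{\pm}\,\omega_{(2s+1)(2s+2)}\equiv 0 \;\mathrm{mod}\;\phi
\end{equation*}
(the case $s=2$ is quoted verbatim in the proof of Theorem \ref{compnon}), and exterior differentiation together with $d\omega_{12}=-K\,\omega_1\wedge\omega_2$ and $d\omega_{(2s+1)(2s+2)}=-K_s^{\ast}\,\omega_1\wedge\omega_2$ yields $\Delta\log(a_s^{\pm})^2=2\big((s+1)K\mp K_s^{\ast}\big)$, from which (i)--(iv) all follow. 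Until you carry out that computation --- and, in particular, isolate where the exceptionality hypothesis is used to decouple $k_s^+$ from $k_s^-$ --- the proof is incomplete at precisely its technical heart.
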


A remarkable property of exceptional surfaces is that singularities of the higher normal bundles
are of holomorphic type and can be smoothly extended to vector bundles. This fact was proved in
\cite[Proposition 4]{V08}.
\begin{proposition}\label{neoksanaafththfora}
Let $f\colon M\to \mathbb{S}^n$ be an $r$-exceptional surface. Then the set $L_0$,
where $f$ fails to be regular, consists of isolated points and all $N_s^f$'s and the Hopf
differentials $\Phi_s$'s extend smoothly to $L_0$ for any $1\leq s\leq r.$ 
\end{proposition}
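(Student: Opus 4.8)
The plan is to prove the statement locally: it suffices to show that in a neighbourhood of each point $p_0$ at which some $N_s^f$ drops dimension, the degeneracy locus is the single point $p_0$ and the relevant data extend smoothly across it. Fix a local complex coordinate $z$ centred at $p_0$ and argue by induction on $s$, the claim being vacuous on a neighbourhood where $f$ is already regular. At the inductive step we may assume $N_1^f,\dots,N_{s-1}^f$ and $\Phi_1,\dots,\Phi_{s-1}$ have already been extended smoothly across $p_0$; consequently the projection onto the orthogonal complement of $N_1^f\oplus\cdots\oplus N_{s-1}^f$ is defined on a full neighbourhood, and so is the $(s+1,0)$-component $\beta_s:=\alpha_{s+1}^{(s+1,0)}=\alpha^f_{s+1}(\partial,\dots,\partial)$, a smooth section of $N_s^f\otimes\mathbb{C}$ over the regular part. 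The base case $s=1$ is immediate: $\beta_1=\alpha^f(\partial,\partial)$ is a holomorphic $N_fM\otimes\mathbb{C}$-valued section by the Codazzi equation, and by \eqref{Gausseqa2} its zeros are the totally geodesic points $K=1$, which are therefore isolated.

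The heart of the argument is to equip the oriented rank-two bundle $N_s^f$ with the structure of a complex line bundle and to recognise $\beta_s$ as an essentially holomorphic section of it. Writing $\beta_s$ in a local unitary frame produces the complex functions $H_{2s+1},H_{2s+2}$ of Section 2; the higher Codazzi equations for a minimal surface, combined with the $r$-exceptionality hypothesis (holomorphicity of $\Phi_s$, equivalently constant eccentricity $\varepsilon_s$ by Theorem \ref{ena}), show that the $N_s^f$-component of $\nabla^\perp_{\bar\partial}\beta_s$ vanishes modulo $N_{s-1}^f$. Concretely, the combinations $\Psi_s^\pm:=H_{2s+1}\mp iH_{2s+2}$ satisfy a linear first-order elliptic system $\bar\partial\Psi=a\Psi+b\bar\Psi$ with bounded coefficients. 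Here one separates two cases according to the value of the constant $\varepsilon_s$. If $\varepsilon_s\neq0$, then by \eqref{what} together with \eqref{elipsi} and \eqref{si} one has $|\Phi_s|^2\propto(\kappa_s^2-\mu_s^2)^2=\varepsilon_s^4\kappa_s^4$, so the zero set of the nonzero holomorphic form $\Phi_s$ coincides with the locus $\kappa_s=0$ where the ellipse $\mathcal{E}^f_s$, hence $N_s^f$, degenerates; being the zero set of a holomorphic object it is discrete. If $\varepsilon_s=0$, then $\Phi_s\equiv0$, the ellipse $\mathcal{E}^f_s$ is a circle and one of $\Psi_s^\pm$ vanishes identically, while the degeneracy locus is $\{\kappa_s=\mu_s=0\}=\{\beta_s=0\}$; here $\Phi_s$ carries no information and the elliptic system must be used.

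To finish, apply the Carleman similarity principle to the surviving component $\Psi_s$: locally $\Psi_s=e^{\omega}h$ with $\omega$ continuous and $h$ holomorphic, so the degeneracy locus $\{\beta_s=0\}=\{h=0\}$ is discrete, which proves that $L_0$ is a set of isolated points. Near such a zero $h=z^{k}u$ with $u$ nonvanishing and $k<\infty$, whence $z^{-k}\beta_s$ extends to a smooth nonvanishing $N_s^f\otimes\mathbb{C}$-valued section; the oriented plane it spans together with its conjugate is the desired smooth extension of $N_s^f$ across $p_0$. Since $\langle\beta_s,\beta_s\rangle$ is holomorphic and bounded on the punctured disc, Riemann's removable singularity theorem extends $\Phi_s$ as well. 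Running the induction through $s=r$ and taking the finite union of the discrete degeneracy loci yields the assertion.

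I expect the main obstacle to be the isotropic directions, where $\Phi_s$ vanishes identically: there the isolation of the zeros cannot be read off from a holomorphic differential, and one is forced to extract the precise first-order elliptic system satisfied by the fundamental-form components from the higher Codazzi equations and to invoke the similarity principle, all while carrying the inductive projection onto the already-extended lower normal bundles.
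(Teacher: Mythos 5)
A preliminary remark: the paper does not prove this proposition at all; it quotes it from \cite[Proposition 4]{V08}. The only internal evidence of that proof is in the proof of Theorem \ref{compnon}, where the paper extracts from it the first-order system $\partial\overline{H}_{2s+1}/\partial\bar{z}=(s+1)i\,\overline{H}_{2s+1}\,\omega_{12}(\bar{\partial})+\overline{H}_{2s+2}\,\omega_{2s+1,2s+2}(\bar{\partial})$ (and its companion), together with ``a theorem due to Chern'' \cite[p.~32]{Ch} yielding $\overline{H}_{a}=z^{m}H^{*}_{a}$ with $(H^{*}_{2s+1},H^{*}_{2s+2})$ nonvanishing. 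Your induction on $s$, the reduction of $\beta_s=\alpha_{s+1}^{(s+1,0)}$ to a Carleman-type system, and the factorization $z^{k}\cdot(\text{nonvanishing})$ reconstruct exactly this route, so in substance your argument agrees with the cited proof.

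Three points should be tightened. First, the quasi-holomorphicity of $\beta_s$ modulo $N_{s-1}^f$ is a consequence of the structure equations of an arbitrary minimal surface and does not use exceptionality; moreover, to have bounded coefficients near a degenerate point you should write the system in a smooth frame of the full complement $\left(N_1^f\oplus\cdots\oplus N_{s-1}^f\right)^{\perp}$, which exists by the induction hypothesis, rather than in a frame adapted to $N_s^f$, since the form $\omega_{2s+1,2s+2}$ presupposes the very bundle you are trying to extend. Second, and this is where exceptionality genuinely enters, $N_s^f$ drops rank wherever $\mu_s=0$ (the ellipse degenerating to a segment), not only where $\beta_s=0$; the constancy of $\varepsilon_s$ is what identifies $\{\mu_s=0\}$ with $\{\kappa_s=0\}=\{\beta_s=0\}$. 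You use this correctly in the case $\varepsilon_s\neq 0$, but it should also be said in the base case $s=1$, where you assert that the degeneracy locus is the set of totally geodesic points. Third, your last step has a genuine small gap: a nonvanishing complex vector $z^{-k}\beta_s$ need not have linearly independent real and imaginary parts, so ``the oriented plane it spans together with its conjugate'' could collapse to a line at $p_0$, and $N_s^f$ would then not extend with rank two. Writing $\kappa_s=|z|^{k}\kappa_s^{*}$ and $\mu_s=|z|^{k}\mu_s^{*}$ and using that $\mu_s/\kappa_s$ is a positive constant (positive because $N_s^f$ has rank two on the regular set, so $\varepsilon_s<1$) gives $\mu_s^{*}(p_0)>0$ by continuity and closes the gap; the same observation shows that the two scalar similarity factorizations of $\Psi_s^{\pm}$ have equal vanishing orders, which your argument needs in the nonisotropic case and which the vector-valued form of Chern's theorem, as invoked in the paper, delivers directly.
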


\section{Absolute value type functions}

For the proof of our results, we shall use the notion of absolute value type functions
introduced in \cite{EGT,ET}. A smooth complex valued function $p$ defined on a
Riemann surface is called of \textit{holomorphic type} if locally $p=p_0p_1,$ where $p_0$
is holomorphic and $p_1$ is smooth without zeros. A function $u\colon M\to\lbrack 0,+
\infty)$ defined on a Riemann surface $M$ is called of \textit{absolute value type} if there
is a function $p$ of holomorphic type on $M$ such that $u=|p|.$

The zero set of such a function on a connected compact oriented surface $M$ is either
isolated or the whole of $M$, and outside its zeros the function is smooth. If $u$ is a
nonzero absolute value type function, i.e., locally $u=|t_0|u_1$, with $t_0$ holomorphic,
the order $k\ge1$ of any point $p\in M$ with $u(p)=0$ is the order of $t_0$ at $p.$ Let
$N(u)$ be the sum of the orders for all zeros of $u.$ Then $\Delta\log u$ is bounded on
$M\smallsetminus\left\{u=0\right\}$ and its integral is computed in the following lemma
that was proved in \cite{EGT,ET}.

\begin{lemma}\label{forglobal}
Let $(M,ds^2)$ be a compact oriented two-dimensional Riemannian manifold with area
element $dA.$

(i) If $u$ is an absolute value type function on $M,$ then 
\begin{equation*}
\int_{M}\Delta \log udA=-2\pi N(u).
\end{equation*}

(ii) If $\Phi $ is a holomorphic symmetric $(r,0)$-form on $M,$ then either $\Phi =0$ or
$N(\Phi)=-r\chi (M),$ where $\chi (M)$ is the Euler-Poincar\'{e} characteristic of $M.$
\end{lemma}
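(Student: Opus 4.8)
The plan for part (i) is to reduce the statement to a residue-type computation in the Euclidean plane by exploiting the conformal invariance of the integrand, and then to extract the contribution of each zero of $u$ by excising small geodesic disks and applying Stokes' theorem. First I would record that $\Delta\log u\,dA = d(\ast\, d\log u)$, where $\ast$ is the Hodge star operator of $ds^2$; since in dimension two the Hodge star acting on $1$-forms is conformally invariant, the $2$-form $d(\ast\, d\log u)$ depends only on the conformal structure and, in a local conformal coordinate $z$ with $ds^2=\lambda^2|dz|^2$, coincides with $\Delta_0\log u\,dx\wedge dy$, where $\Delta_0=\partial_x^2+\partial_y^2$. By the dichotomy recalled above, the zero set of a nonzero $u$ is finite, say $\{z_1,\dots,z_N\}$ with orders $k_1,\dots,k_N$; near each $z_j$ we may write $u=|t_0|u_1$ with $t_0=(z-z_j)^{k_j}h$, $h$ holomorphic and nonvanishing, and $u_1$ smooth and positive. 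Hence $\log u = k_j\log|z-z_j|+\log|h|+\log u_1$, and because $\log|h|$ is harmonic while $\Delta_0\log|z-z_j|=0$ off $z_j$, the function $\Delta\log u$ agrees pointwise near $z_j$ with the bounded quantity $\Delta\log u_1$. In particular $\Delta\log u$ is bounded on $M\smallsetminus\{u=0\}$ and integrable on $M$.

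Next I would excise the geodesic disks $D_\epsilon(z_j)$ and set $M_\epsilon=M\smallsetminus\bigcup_j D_\epsilon(z_j)$. Applying Stokes' theorem gives $\int_{M_\epsilon}\Delta\log u\,dA=\int_{\partial M_\epsilon}\ast\,d\log u$, where $\partial M_\epsilon$ carries the boundary orientation, so that each circle $\partial D_\epsilon(z_j)$ is traversed clockwise. On $\partial D_\epsilon(z_j)$ the smooth part of $\log u$ contributes a flux of order $O(\epsilon)$, while by conformal invariance the singular part may be computed in the flat metric: in polar coordinates $\ast\,d(k_j\log r)=k_j\,d\theta$, so that $\int_{\partial D_\epsilon(z_j)}\ast\,d(k_j\log r)=2\pi k_j$ in the counterclockwise sense. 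Taking the boundary orientation into account and letting $\epsilon\to0$, the integrability established above yields $\int_M\Delta\log u\,dA=-\sum_{j=1}^N 2\pi k_j=-2\pi N(u)$, which is assertion (i). The main technical point here is the justification of the passage to the limit, namely that the smooth-part flux vanishes and that $\int_{M_\epsilon}\Delta\log u\,dA\to\int_M\Delta\log u\,dA$; both follow from the local description of the singularities obtained above.

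For part (ii) I would deduce the formula from (i) together with the Gauss--Bonnet theorem. Writing $\Phi=\phi\,dz^r$ in a local conformal coordinate, the pointwise norm $u=|\Phi|=c\,|\phi|\,\lambda^{-r}$ is of absolute value type, since $\phi$ is holomorphic and $\lambda^{-r}$ is smooth and positive; its zeros, counted with multiplicity, are exactly the zeros of $\Phi$, so $N(u)=N(\Phi)$. Off the zero set one has $\log u=\mathrm{const}+\log|\phi|-r\log\lambda$, and since $\log|\phi|$ is harmonic away from the zeros while $\Delta\log\lambda=-K$, it follows that $\Delta\log u=rK$ pointwise on $M\smallsetminus\{u=0\}$. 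Integrating and using Gauss--Bonnet, $\int_M\Delta\log u\,dA=r\int_M K\,dA=2\pi r\,\chi(M)$. Comparing with part (i) gives $-2\pi N(\Phi)=2\pi r\,\chi(M)$, that is $N(\Phi)=-r\chi(M)$, provided $\Phi\not\equiv0$; when $\Phi=0$ there is nothing to prove. Equivalently, this is the statement that a nonzero holomorphic section of the $r$-th power of the canonical bundle $K_M$ has exactly $\deg K_M^{\otimes r}=-r\chi(M)$ zeros.
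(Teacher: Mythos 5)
Your proof is correct and takes essentially the same route as the source: the paper does not prove Lemma \ref{forglobal} itself but quotes it from \cite{EGT,ET}, where part (i) is established by exactly your excision--Stokes residue computation around the zeros and part (ii) by combining (i) with the pointwise identity $\Delta\log\Vert\Phi\Vert=rK$ and the Gauss--Bonnet theorem. One minor remark: in (ii) the norm $u=\Vert\Phi\Vert$ is of absolute value type only in the local sense, since no globally defined function $p$ of holomorphic type with $u=|p|$ can exist once $N(u)>0$ (the total winding number of a smooth complex-valued function on a closed surface vanishes), but your proof of (i) uses nothing beyond the local factorization $u=|z-z_j|^{k_j}\cdot(\text{smooth positive})$ near each zero, so the application is unaffected --- and this local reading is also how the notion is used in \cite{EGT,ET} and throughout the present paper.
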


The following lemma, that was proved in \cite{N}, provides a sufficient condition for a
function to be of absolute value type.
\begin{lemma}\label{dena}
Let $D$ be a plane domain containing the origin with coordinate $z$ and $u$ be a real
analytic nonnegative function on $D$ such that $u(0)=0.$ If $u$ is not identically zero
and $\log u$ is harmonic away from the points where $u=0$, then $u$ is of absolute
value type and the order of the zero of $u$ at the origin is even.
\end{lemma}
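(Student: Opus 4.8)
The plan is to localize near the origin and to convert the hypothesis $\Delta\log u=0$ into a statement about the holomorphic type of the logarithmic derivative $\psi:=u_z/u$. Writing $z=x+iy$ and using $\partial_z\partial_{\bar z}=\tfrac14\Delta$, harmonicity of $\log u$ on the set where $u>0$ is equivalent to $\partial_{\bar z}\psi=0$ there, so $\psi$ is holomorphic on $\{u>0\}$. Before exploiting this I would record two preliminary reductions. \emph{Parity of the order.} Expanding the real analytic $u$ as $u=\sum_{j\ge d}P_j$ with $P_d\not\equiv0$ homogeneous of degree $d$, nonnegativity forces $P_d(\omega)\ge0$ for every unit vector $\omega$ (the leading term dominates along each ray), hence $P_d\ge0$ as a form; if $d$ were odd then $P_d(-\omega)=-P_d(\omega)$ would give $P_d\equiv0$, a contradiction, so $d$ is even. \emph{The zero set is discrete.} If $Z=\{u=0\}$ had a one-dimensional branch, then near a regular point $q\neq0$ one may choose coordinates with $Z=\{y=0\}$ and $u=y^{2m}w$, $w>0$ real analytic (the transverse order is even by the same ray argument); then $\Delta\log u=-2m/y^2+O(1)\neq0$ for small $y\neq0$, contradicting harmonicity away from $Z$. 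Hence the origin is an isolated zero and $u$ vanishes there to a finite even order.

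With $0$ an isolated zero, $\psi$ is holomorphic on a punctured disk $D^*_\rho$, and the heart of the argument is to control its singularity at $0$. First I would show $\psi$ is meromorphic: by the {\L}ojasiewicz inequality for the real analytic function $u$ with isolated zero set $\{0\}$ there are $c,N>0$ with $u(z)\ge c|z|^N$ near $0$, so $|\psi|\le|u_z|/u\le C|z|^{-N}$ on small circles, and the Laurent coefficients $a_n=\tfrac1{2\pi i}\oint_{|z|=r}\psi z^{-n-1}dz$ satisfy $|a_n|\le Cr^{-N-n}\to0$ as $r\to0$ for $n<-N$; thus $a_n=0$ for $n\ll0$ and $\psi$ has at most a pole. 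Integrating the meromorphic $\psi$ and using $\partial_z\log u=\psi$ together with $\partial_{\bar z}\log u=\bar\psi$ (as $u$ is real), I get $\log u=2\operatorname{Re}\!\int\psi\,dz+\text{const}$ on $D^*_\rho$. Writing the polar part of $\psi$ as $\sum_{j\ge2}a_{-j}z^{-j}+a_{-1}z^{-1}$ plus a holomorphic remainder, its antiderivative contributes a term $2\operatorname{Re}Q(z)$ with $Q$ a nonconstant principal part in $1/z$ whenever some $a_{-j}\neq0$ for $j\ge2$.

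The decisive point is to eliminate these higher poles and to identify the residue. Single-valuedness of $\log u$ on $D^*_\rho$ forces $a_{-1}$ to be real, say $a_{-1}=\nu$, since the multivalued contribution $2\operatorname{Im}(a_{-1})\arg z$ must vanish. If $Q\not\equiv0$ then $\sup_{|z|=r}\operatorname{Re}Q(z)\to+\infty$ as $r\to0$, so $\log u=2\operatorname{Re}Q+2\nu\log|z|+2\operatorname{Re}\Phi$ would be unbounded above on small circles; but the finite order $d$ of the zero gives $u\le C|z|^d$, hence $\sup_{|z|=r}\log u\to-\infty$, a contradiction. Therefore $Q\equiv0$, $\psi=\nu/z+\phi(z)$ with $\phi$ holomorphic across $0$, and choosing a holomorphic antiderivative $\Phi$ of $\phi$ I obtain $\log u=2\nu\log|z|+2\operatorname{Re}\Phi$, i.e. $u=|z^{2\nu}e^{2\Phi}|$. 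Since $u\asymp|z|^{2\nu}$ with $e^{2\Phi}$ holomorphic and nonvanishing near $0$, the exponent $2\nu$ equals the finite integer order of the zero; thus $p:=z^{2\nu}e^{2\Phi}$ is of holomorphic type and $u=|p|$, proving $u$ is of absolute value type, while the parity reduction shows $2\nu$ is even. I expect the main obstacle to be precisely the singularity analysis of $\psi$ at the origin: securing meromorphy through a {\L}ojasiewicz-type lower bound and then ruling out poles of order $\ge2$ by combining single-valuedness of $\log u$ with the vanishing $u(0)=0$.
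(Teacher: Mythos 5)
Your proposal is correct, but note that the paper contains no internal proof to compare against: Lemma \ref{dena} is imported verbatim from the reference [N] (Naka--Miyaoka), so your argument is a blind reconstruction, and it is essentially the standard one -- reducing everything to the statement that $\psi=\partial_z\log u=u_z/u$ is holomorphic off the zero set, meromorphic at an isolated zero, and in fact has only a simple pole with real residue $\nu$, whence $u=|z|^{2\nu}\,e^{2\operatorname{Re}\Phi}$ locally and $2\nu$ equals the (even, by your leading-form parity argument) vanishing order. All the key steps check out: the ray argument for parity of the leading homogeneous term; the {\L}ojasiewicz lower bound $u\ge c|z|^N$ giving $|\psi|\le C|z|^{-N}$ and hence vanishing of the deep Laurent coefficients; single-valuedness of $\log u$ killing $\operatorname{Im}a_{-1}$; and the maximum-on-circles blow-up of $\operatorname{Re}Q$ against boundedness of $u$ to kill poles of order $\ge 2$.

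Two spots deserve a patch, though neither is a genuine gap. First, in the discreteness argument you straighten the zero curve by a real-analytic coordinate change and then compute ``$\Delta\log u=-2m/y^2+O(1)$'' as if $\Delta$ were still the flat Laplacian; harmonicity is not invariant under general diffeomorphisms. Either straighten conformally (a real-analytic arc is locally the image of a real segment under the holomorphic extension of its parametrization, and harmonicity is conformally invariant), or observe that the transformed operator is elliptic with positive coefficient on $\partial_y^2$, so the singular term $-2mc/y^2$, $c>0$, survives and the contradiction stands. Relatedly, the factorization $u=y^{2m}w$ with $w>0$ holds only after choosing the regular point generically (away from the isolated zeros of the leading coefficient $c_{m_0}(x)$ in $u=\sum_j c_j(x)y^j$), which is all you need. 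Second, your construction is local at each zero; to produce the single function $p$ of holomorphic type on $D$ demanded by the definition, glue via a Weierstrass product $t_0$ with the prescribed (discrete) zeros and orders and set $p=t_0\cdot(u/|t_0|)$ -- a one-line globalization, and in any case the lemma is only ever used locally in the paper.
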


\section{Pseudoholomorphic curves in $\mathbb{S}^6$}

In this section we summarize some well known facts about pseudoholomorphic
curves in the nearly K{\"a}hler sphere $\mathbb{S}^6$. It is known that the
multiplicative structure on the Cayley numbers $\mathbb{O}$ can be
used to define an almost complex structure on the sphere $\mathbb{S}^6$ in
$\mathbb{R}^7.$ This almost complex structure is not integrable but it is nearly
K{\"a}hler. A \textit{pseudoholomorphic curve}, which was introduced by Bryant \cite{Br}, is a nonconstant smooth map
$g\colon M\to\mathbb{S}^6$ from a Riemann surface $M$ into the nearly K{\"a}hler
sphere $\mathbb{S}^6,$ whose differential is complex linear.

It is known \cite{Br, EschVl} that any pseudoholomorphic curve $g\colon M\to\mathbb{S}
^6$ is $1$-isotropic. The nontotally geodesic pseudoholomorphic curves in $\mathbb{S}^6$
are are either substantial in a totally geodesic $\mathbb{S}^5\subset
\mathbb{S}^6$ or substantial in $\mathbb{S}^6$  (see \cite{BVW}). In the latter case, the curve is either
null torsion (studied by Bryant \cite{Br}) or nonisotropic. It turns out that null torsion curves are isotropic. 

The following theorem \cite{EschVl} provides a characterization of Riemannian metrics
that arise as induced metrics on pseudoholomorphic curves in $\mathbb{S}^5.$
\begin{theorem}\label{eschvlach}
Let $(M,ds^2)$ be a simply connected Riemann surface, with Gaussian curvature $K\leq 1$ and Laplacian operator
$\Delta$. Suppose that the function $1-K$ is of absolute value type. Then there exists an isometric pseudoholomorphic curve $g\colon M\to\mathbb{S}^5$ if and only if
\[
\Delta\log(1-K)=6K.\tag{$\ast$}
\]
In fact, up to translations with elements of $G_2$, that is the set $Aut(\mathbb{O})\subset SO(7),$ there is precisely one associated family of such maps.
\end{theorem}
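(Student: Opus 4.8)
The plan is to establish the two implications separately, reducing the existence problem to the compatibility of the Gauss--Codazzi--Ricci system, which for a curve lying in $\mathbb{S}^5$ collapses onto the single scalar equation $(\ast)$.

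\emph{Necessity.} Let $g\colon M\to\mathbb{S}^5$ be a pseudoholomorphic curve. It is minimal and $1$-isotropic, so by Lemma \ref{systematic} one has $\Phi_1=0$ and $K_1^\perp=1-K$; in particular $K\le1$. Being substantial in $\mathbb{S}^5$, its normal bundle splits as $N_1^g\oplus N_2^g$ with $N_1^g$ a plane bundle and $N_2^g$ a line bundle, so the second curvature ellipse degenerates to a segment and hence has constant eccentricity; by Theorem \ref{ena} the differential $\Phi_2$ is holomorphic, so $g$ is exceptional (equivalently superconformal, since $\Phi_1$ is the last but one). Because $\Phi_1=0$, Proposition \ref{3i}(iii) with $r=1$ gives $\Delta\log\|\alpha^g_2\|^2=2(2K-K_1^\ast)$, and the Gauss equation \eqref{Gausseqa2} turns the left-hand side into $\Delta\log(1-K)$. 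It therefore remains to prove $K_1^\ast=-K$. By Proposition \ref{5}, $K_1^\ast=K_1^\perp-\|\alpha^g_3\|^2/(2K_1^\perp)$, so with $K_1^\perp=1-K$ the identity $K_1^\ast=-K$ is equivalent to $\|\alpha^g_3\|^2=2(1-K)=\|\alpha^g_2\|^2$. This equal-length identity for consecutive fundamental forms is the one genuinely new input, and it is exactly where pseudoholomorphicity enters: in the $G_2$-adapted Frenet frame of $g$ the octonion multiplication by unit vectors intertwines successive higher fundamental forms by isometries, and for a curve constrained to $\mathbb{S}^5$ this forces $\|\alpha^g_3\|=\|\alpha^g_2\|$. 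Granting it, $(\ast)$ follows. (As a consistency check, the same computation with $\Phi_1\neq0$ and $K_1^\perp=0$ in $\mathbb{S}^3$, via Proposition \ref{3i}(ii), reproduces the classical spherical Ricci condition $\Delta\log(1-K)=4K$.)

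\emph{Sufficiency and uniqueness.} Conversely, assume $K\le1$, that $1-K$ is of absolute value type, and that $(\ast)$ holds. Since $1-K$ is of absolute value type, its zeros are isolated (we discard the degenerate case $K\equiv1$, which corresponds only to a non-substantial totally geodesic map), and on their complement $M_0$ the plan is to reconstruct $g$ by building a $G_2$-Frenet frame directly from the metric. Concretely, I would prescribe on $M_0$ the data of a $1$-isotropic minimal surface in $\mathbb{S}^5$ by setting $\Phi_1=0$ and $\|\alpha_2\|^2=2(1-K)$, and then prescribe the third fundamental form and the normal connection so that the pseudoholomorphic length relation $\|\alpha_3\|^2=\|\alpha_2\|^2$ holds. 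Using Propositions \ref{5} and \ref{3i} to eliminate the auxiliary quantities $K_1^\ast$ and $\|\alpha_3\|^2$, the Gauss, Codazzi and Ricci equations for this prescribed data reduce precisely to $(\ast)$ and are therefore satisfied. As $M$ is simply connected, the fundamental theorem of submanifolds yields a minimal isometric immersion into $\mathbb{S}^5$ realizing these fundamental forms, and the octonionic compatibility built into the frame guarantees that the resulting map is pseudoholomorphic. The associated family $g_\theta$ of Section 2 accounts for the freedom in the integration, while rigidity of the $G_2$-frame up to $\mathrm{Aut}(\mathbb{O})=G_2$ shows that this family is unique up to translations by $G_2$. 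Finally, the isolated zeros of $1-K$ are totally geodesic points at which all prescribed data are of holomorphic type, so the frame and the immersion extend smoothly across them, as in Proposition \ref{neoksanaafththfora}.

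\emph{The main obstacle.} The crux is the sufficiency construction: showing that the full integrability system of a pseudoholomorphic curve---Gauss--Codazzi--Ricci together with the octonionic closure condition---is genuinely equivalent to the single equation $(\ast)$ and conceals no further compatibility constraint. This amounts to writing out the $G_2$-Toda system attached to the harmonic sequence of $g$ and checking that, under the superconformal ($\mathbb{S}^5$) constraint and the equal-length identities $\|\alpha_{k+1}\|=\|\alpha_k\|$, the entire system collapses to $(\ast)$. Establishing these equal-length identities from the nearly K{\"a}hler octonionic structure is the key step on which both implications rest.
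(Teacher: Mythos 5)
A preliminary but important remark: the paper does not prove Theorem \ref{eschvlach} at all — it is quoted in Section 5 as a known result from \cite{EschVl} — so there is no in-paper argument to compare yours against, and your proposal has to stand on its own. Judged that way, your necessity direction correctly isolates the right reduction: granting that $g$ is $1$-isotropic (so $K_1^{\perp}=1-K$ and $\Vert\alpha^g_2\Vert^2=2(1-K)$ by Lemma \ref{systematic} and \eqref{Gausseqa2}), Proposition \ref{3i}(iii) with $r=1$ together with Proposition \ref{5} shows that $(\ast)$ is equivalent to the single identity $\Vert\alpha^g_3\Vert^2=2(1-K)$. But that identity is precisely where your argument stops being a proof: the sentence about octonion multiplication ``intertwining successive higher fundamental forms by isometries'' asserts the conclusion rather than deriving it, and it carries the entire weight of this direction. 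A concrete route that the paper itself points to: Section 5 records that $g$ is congruent to its polar surface $g^*=\xi$ (\cite[Corollary 3]{V16}); since $A_\xi=0$ (because $\xi$ spans $N_2^g\perp N_1^g$), the metric induced by $g^*$ is expressible in terms of $\alpha^g_3$ and $K_1^{\perp}$, and the congruence $g\cong g^*$ then forces $\Vert\alpha^g_3\Vert^2=2K_1^{\perp}=2(1-K)$. Some such computation must actually appear.

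The sufficiency direction has a more serious gap. Even granting that your prescribed data satisfies Gauss--Codazzi--Ricci — which you assert but do not verify; you would need to prescribe $\alpha_3$ itself and the connection forms $\omega_{12},\omega_{34},\omega_{3\,5},\omega_{4\,5}$, not merely the scalar $\Vert\alpha_3\Vert^2$, and check the Codazzi equation for $\alpha_3$ and the Ricci equation for $N_1$ — the fundamental theorem of submanifolds produces only a minimal isometric immersion into $\mathbb{S}^5$. Pseudoholomorphicity with respect to the nearly K{\"a}hler structure of $\mathbb{S}^6$ is an additional first-order condition on a $G_2$-adapted frame, and ``the octonionic compatibility built into the frame'' is again the conclusion restated, not an argument. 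The same applies to uniqueness: identifying the residual freedom as exactly one associated family modulo $G_2$ (rather than, say, modulo $SO(6)$) requires an actual $G_2$-frame computation. In short, both implications rest on the same two unproven key steps, so what you have is a correct and useful skeleton whose load-bearing joints are missing.
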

The above result shows that a minimal surface in a sphere is locally isometric to a
pseudoholomorphic curve in $\mathbb{S}^5$ if its Gaussian curvature satisfies the 
condition $(\ast)$ at points where $K<1$ or equivalently if the metric
$d\hat{s}^2=(1-K)^{1/3}ds^2$ is flat.

Let $g\colon M\to\mathbb{S}^5$ be a pseudoholomorphic curve and let
$\xi\in N_fM$ be a smooth unit vector field that spans the extended line bundle $N_2^g$ over the isolated set of points where $f$ fails to be regular  (see Proposition \ref{neoksanaafththfora}).   The
surface $g^*\colon M\to\mathbb{S}^5$ defined by $g^*=\xi$ is called the \textit{polar surface} of $g.$ It has been proved in \cite[Corollary 3]{V16} that the surfaces $g$ and $g^*$ are
congruent.

We recall the following theorem \cite{EschVl}, which provides a characterization of Riemannian metrics
that arise as induced metrics on isotropic substantial pseudoholomorphic curves in $\mathbb{S}^6.$

\begin{theorem}
Let $(M,ds^2)$ be a simply connected Riemann surface, with Gaussian curvature $K\leq 1$ and Laplacian operator
$\Delta$. Suppose that the function $1-K$ is of absolute value type. Then there exists an isotropic pseudoholomorphic curve $g\colon M\to\mathbb{S}^6,$ unique up to translations with elements of $G_2,$ if and only if
\[
\Delta\log(1-K)=6K-1.\tag{$\ast\ast$}
\]
\end{theorem}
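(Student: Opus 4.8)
The plan is to establish the two implications separately: necessity by a direct computation with the structure equations, and sufficiency by reconstructing the immersion from prescribed isotropic data, following the pattern of the $\mathbb{S}^5$ characterization in Theorem~\ref{eschvlach}.

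For the necessity, suppose $g\colon M\to\mathbb{S}^6$ is isotropic and pseudoholomorphic. Since $g$ is substantial its normal bundle splits as $N_1^g\oplus N_2^g$ into two plane bundles, and isotropy means $\Phi_1=\Phi_2=0$, so $g$ is exceptional. By Lemma~\ref{systematic} the $1$- and $2$-isotropy give $K_1^\perp=1-K$ and $\|\alpha_3^g\|^2=4K_2^\perp$. I would then use the structural relation $K_2^\perp=(1-K)/4$, forced on pseudoholomorphic curves by the compatibility of the higher fundamental forms with the nearly K{\"a}hler ($G_2$) structure; equivalently $\|\alpha_3^g\|^2=1-K$. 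Feeding this into Proposition~\ref{5} yields $K_1^\ast=(1-K)-\tfrac12=\tfrac12-K$, and Proposition~\ref{3i}(iii) with $r=1$ (legitimate because $\Phi_1=0$) gives $\Delta\log\|\alpha_2^g\|^2=2(2K-K_1^\ast)=6K-1$. As $\|\alpha_2^g\|^2=2(1-K)$ by \eqref{Gausseqa2}, this is exactly $(\ast\ast)$.

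For the sufficiency, let $(M,ds^2)$ be simply connected with $K\le1$, $1-K$ of absolute value type, and satisfying $(\ast\ast)$. I would reconstruct the immersion from its candidate invariants. Isotropy forces every curvature ellipse to be a circle, so the fundamental forms are determined, up to rotations in the plane bundles, by their norms; I prescribe $\|\alpha_2\|^2=2(1-K)$ and $\|\alpha_3\|^2=1-K$ (that is, $K_1^\perp=1-K$, $K_2^\perp=(1-K)/4$) and define the normal connection through these normal curvatures. One then verifies the Gauss, Codazzi and Ricci equations for this data; by Propositions~\ref{5} and \ref{3i} the nontrivial ones collapse to $(\ast\ast)$ (indeed, at the top level they force the constant intrinsic curvature $K_2^\ast=1/2$ of $N_2$). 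Because $1-K$ is of absolute value type its zeros, the totally geodesic points, are isolated, and by an extension argument of the type in Proposition~\ref{neoksanaafththfora} the plane bundles and the forms extend smoothly across them; simple connectivity then allows one to integrate the data, via the fundamental theorem of submanifolds, to an isometric minimal immersion $g\colon M\to\mathbb{S}^6$, unique up to an isometry of $\mathbb{S}^6$.

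The genuinely hard step, and the main obstacle, is to show that the immersion obtained this way is actually pseudoholomorphic, i.e.\ that its differential is complex linear for the almost complex structure coming from octonion multiplication, and that the residual ambiguity is exactly $G_2$ rather than all of $SO(7)$. The plan is to align the adapted moving frame with an octonionic ($G_2$) frame: the prescribed isotropic data together with $(\ast\ast)$ should be precisely what makes the frame equations take the $G_2$-reduced form, so that $J$-linearity holds identically and any two solutions differ by a constant element of $G_2=\mathrm{Aut}(\mathbb{O})$. This is the point where the special value $K_2^\perp=(1-K)/4$ is indispensable and where the argument leaves the purely Riemannian fundamental theorem; the remaining verifications are the routine bookkeeping indicated above.
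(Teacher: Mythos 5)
First, a point of comparison: the paper does not prove this statement at all; it is recalled from Eschenburg--Vlachos \cite{EschVl}, so there is no internal proof to measure yours against. Judged on its own terms, your proposal has a genuine gap in each direction.

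For necessity, everything hinges on the relation $K_2^\perp=(1-K)/4$ (equivalently $\Vert\alpha_3\Vert^2=1-K$), which you simply assert is ``forced by compatibility with the nearly K\"ahler ($G_2$) structure.'' But given $1$-isotropy, the Gauss equation and Propositions \ref{5} and \ref{3i}, this relation is \emph{equivalent} to $(\ast\ast)$: Proposition \ref{3i}(iii) with $r=1$ gives $\Delta\log(1-K)=4K-2K_1^\ast$ and Proposition \ref{5} gives $K_1^\ast=(1-K)-\Vert\alpha_3\Vert^2/(2(1-K))$, so $(\ast\ast)$ holds precisely when $\Vert\alpha_3\Vert^2=1-K$. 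Asserting that relation is therefore tantamount to assuming the conclusion. What actually forces it is the structural identity $H_6=i(\kappa_1-H_5)$ for an adapted frame along a pseudoholomorphic curve (see \cite[Lemma 5]{V16}, quoted in the proof of Lemma \ref{ksanauseful}): combined with $K_2^\perp=i\left(H_5\overline{H}_6-\overline{H}_5H_6\right)$ and $\Vert\alpha_3\Vert^2=4\left(|H_5|^2+|H_6|^2\right)$ it yields $\Vert\alpha_3\Vert^2=4\kappa_1^2-4K_2^\perp$, and then $2$-isotropy ($\Vert\alpha_3\Vert^2=4K_2^\perp$, Lemma \ref{systematic}) together with $\kappa_1^2=(1-K)/2$ gives $K_2^\perp=(1-K)/4$. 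That computation, or an explicit citation of it, is the missing content of your necessity argument.

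For sufficiency your text is a plan rather than a proof: you concede that showing the reconstructed immersion is $J$-holomorphic for the octonionic almost complex structure, and that the residual ambiguity is $G_2$ rather than all of $SO(7)$, is ``the genuinely hard step,'' and you only indicate that the frame equations ``should'' reduce to $G_2$-form. That step is the entire substance of the existence half of the theorem --- it is where \cite{EschVl} does the real work, by constructing a $G_2$-adapted frame and integrating its structure equations --- and the purely Riemannian fundamental theorem of submanifolds cannot deliver it. A secondary issue is the extension across the zeros of $1-K$: the data you prescribe degenerates there, and Proposition \ref{neoksanaafththfora} is a statement about already-existing exceptional surfaces, so it cannot be invoked to extend candidate data before the immersion exists; this is exactly where the absolute-value-type hypothesis must be used in an essential, and so far unexplained, way.
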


The following theorem \cite{V16} provides a characterization of Riemannian metrics
that arise as induced metrics on nonisotropic substantial pseudoholomorphic curves in $\mathbb{S}^6.$
\begin{theorem}
Let $(M,ds^2)$ be a simply connected Riemann surface, with Gaussian curvature $K\leq 1$ and Laplacian operator
$\Delta$. Suppose that the function $1-K$ is of absolute value type. Then there exists a nonisotropic pseudoholomorphic curve $g\colon M\to\mathbb{S}^6,$ unique up to translations with elements of $G_2,$ if and only if
\[
\Delta\log\left((1-K)^2\left(1-6K+\Delta\log\left(1-K\right)\right)\right)=12K.
\]
Moreover the following holds:
\begin{equation}\label{trik}
6K-1<\Delta\log(1-K)<6K.
\end{equation}
\end{theorem}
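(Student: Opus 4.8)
The plan is to reduce the statement to a complete description of the local invariants of a nonisotropic pseudoholomorphic curve $g\colon M\to\mathbb{S}^6$ and then to invoke the fundamental theorem of submanifolds. Since $g$ is $1$-isotropic, Lemma~\ref{systematic} and \eqref{Gausseqa2} give $K_1^\perp=1-K$ and $\|\alpha^g_2\|^2=2(1-K)$, so the first curvature ellipse is a circle whose radius is fixed by the metric. As $\Phi_1$ is always holomorphic, $g$ is $0$-exceptional and Proposition~\ref{3i}(iii) applies with $r=1$, yielding $\Delta\log(1-K)=2(2K-K_1^\ast)$; hence $K_1^\ast=2K-\tfrac12\Delta\log(1-K)$ is intrinsic. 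Feeding this into Proposition~\ref{5} gives the closed form
\[
\|\alpha^g_3\|^2=2(1-K)\big(K_1^\perp-K_1^\ast\big)=(1-K)\big(2-6K+\Delta\log(1-K)\big).
\]
Thus, writing $w=1-6K+\Delta\log(1-K)$, the quantity $\|\alpha^g_3\|^2=(1-K)(1+w)$ is already determined by the metric, and by \eqref{si} this fixes $\kappa_2^2+\mu_2^2=\tfrac14(1-K)(1+w)$.

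The one genuinely new datum is the second normal curvature $K_2^\perp$, and here the nearly K\"ahler ($G_2$) structure enters. Working in a $G_2$-adapted unitary frame along $g$, in which the octonionic cross product identifies the tangent plane, $N_1^g$ and $N_2^g$, the Ricci and Codazzi equations of $g$ collapse to the single relation $K_2^\ast=K+K_1^\ast$ between the intrinsic curvatures of the tangent bundle and of the first two normal planes. Combined with Proposition~\ref{5} for $r=2$ (note $\alpha^g_4=0$, since $N_3^g$ is absent in $\mathbb{S}^6$), namely $K_2^\ast=2K_2^\perp/(1-K)$, this is equivalent to
\[
\tfrac14\|\alpha^g_3\|^2+K_2^\perp=(\kappa_2+\mu_2)^2=\tfrac12(1-K),\qquad\text{i.e.}\qquad K_2^\perp=\tfrac14(1-K)\big(6K-\Delta\log(1-K)\big).
\]
Subtracting, $(\kappa_2-\mu_2)^2=\tfrac14\|\alpha^g_3\|^2-K_2^\perp=\tfrac12(1-K)w$, and multiplying the two displayed squares produces the algebraic identity $(1-K)^2w=4(\kappa_2^2-\mu_2^2)^2$, which re-expresses the argument of the Laplacian in the theorem through the eccentricity datum of the second ellipse (cf.\ \eqref{what}).

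It then remains to compute $\Delta\log(\kappa_2^2-\mu_2^2)$. Since $g$ is nonisotropic, $\Phi_2\neq0$, but $\Phi_2$ is \emph{not} holomorphic (its eccentricity $\kappa_2/\mu_2$ varies with $w$), so Proposition~\ref{3i}(ii) is unavailable; instead one reads off from the same $G_2$ structure equations (the Codazzi identity for $\alpha^g_3$ together with the Ricci equation of $N_2^g$) that $\Delta\log(\kappa_2^2-\mu_2^2)=6K$ at regular points. Inserting $(1-K)^2w=4(\kappa_2^2-\mu_2^2)^2$ and $\Delta\log4=0$ turns this into $\Delta\log\big((1-K)^2w\big)=12K$, the displayed equation. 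The inequalities \eqref{trik} are then forced: with $v=6K-\Delta\log(1-K)=1-w$ one has $K_2^\perp=\tfrac14(1-K)v$ and $(\kappa_2-\mu_2)^2=\tfrac12(1-K)w$, so on the dense regular set $v>0$ is equivalent to $K_2^\perp>0$, i.e.\ to $g$ being substantial in $\mathbb{S}^6$ (rank-two second normal bundle), while $w>0$ is equivalent to $\kappa_2\neq\mu_2$, i.e.\ to nonisotropy; the isolated degeneracies where $K=1$ are controlled because $1-K$ is of absolute value type (Proposition~\ref{neoksanaafththfora}, Lemma~\ref{dena}).

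For the converse and the uniqueness, the plan is to run this computation backwards. Given $(M,ds^2)$ with $1-K$ of absolute value type and satisfying the displayed PDE, the formulas above define candidate data $\kappa_1=\mu_1$, $\kappa_2$, $\mu_2$, $K_1^\perp$, $K_2^\perp$ and their connection forms; the PDE is precisely the integrability (Gauss--Codazzi--Ricci, equivalently Maurer--Cartan for the $G_2$-reduced frame bundle) needed to integrate them, and $1-K$ being of absolute value type guarantees the construction extends across the isolated zeros of the curvature ellipses. Because $M$ is simply connected this integrates to a frame $F\colon M\to G_2\subset SO(7)$ whose distinguished column is the desired $g\colon M\to\mathbb{S}^6\subset\mathrm{Im}\,\mathbb{O}$, automatically pseudoholomorphic and nonisotropic, and unique up to left translation by $G_2$. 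I expect the main obstacle to be the $G_2$-frame computation underlying the two structure relations $K_2^\ast=K+K_1^\ast$ and $\Delta\log(\kappa_2^2-\mu_2^2)=6K$: everything else is bookkeeping with Propositions~\ref{5} and~\ref{3i}, but these two identities encode the full force of the nearly K\"ahler condition and must be extracted by hand from the octonionic structure equations.
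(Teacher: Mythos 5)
First, a point of comparison: the paper offers no proof of this statement. It is imported verbatim from \cite{V16} in Section 5, so there is no internal argument to measure yours against; I can only assess the proposal on its own terms.

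Your necessity direction is essentially correct, and the bookkeeping checks out: $K_1^\perp=1-K$, $K_1^\ast=2K-\tfrac12\Delta\log(1-K)$, $\Vert\alpha_3^g\Vert^2=(1-K)\bigl(2-6K+\Delta\log(1-K)\bigr)$, and the two squares $(\kappa_2\pm\mu_2)^2$ are as you state. Moreover, the two ``$G_2$ identities'' you isolate and defer are genuine, and both are in effect available inside this paper: the relation you write as $K_2^\ast=K+K_1^\ast$ is equivalent to $\kappa_2+\mu_2=\kappa_1$, i.e.\ to the relation $\tilde H_6=i(\kappa_1-\tilde H_5)$ of \cite[Lemma 5]{V16} quoted in the proof of Lemma \ref{ksanauseful}; and $\Delta\log(\kappa_2^2-\mu_2^2)=6K$ is exactly the content of the connection form $\omega_{12}=-\tfrac16\ast d\log(\kappa_2^2-\mu_2^2)$ in \eqref{connectionforms}, which Lemma \ref{ksanauseful}(ii) derives from the Ricci equations. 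So the forward implication does reduce, as you claim, to two identities that are true. One caveat: your argument only yields the non-strict inequalities $6K-1\le\Delta\log(1-K)\le 6K$ away from $K=1$, with strictness precisely where $K_2^\perp>0$ and $\kappa_2\neq\mu_2$. Upgrading to the strict inequality \eqref{trik} at every point, in particular at possible interior zeros of $\Phi_2$, is not addressed and does not follow from ``nonisotropic'' alone, which only forbids the second ellipse from being a circle identically.

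The genuine gap is the converse and the uniqueness statement. ``Run the computation backwards and invoke Maurer--Cartan'' is a program, not a proof: one must exhibit a $G_2$-adapted coframe, and verify that the single scalar PDE on the metric, together with $1-K$ being of absolute value type, implies the full integrability system (Gauss, Codazzi, Ricci and the nearly K\"ahler compatibility) for the candidate data $\kappa_1,\kappa_2,\mu_2,\omega_{12},\omega_{34},\omega_{56}$ --- including across the zeros of $1-K$ and of $\kappa_2-\mu_2$, where the frame of Lemma \ref{ksanauseful} degenerates. That verification is the substance of the cited theorem in \cite{V16} (compare the analogous Theorem \ref{eschvlach}, whose proof is exactly such an integration argument), and your sketch does not carry it out.
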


\section{Isometric deformations of pseudoholomorphic curves in $\mathbb{S}^5$}
We are interested in nontrivial isometric deformations of pseudoholomorphic
curves in $\mathbb{S}^5.$ Given a pseudoholomorphic curve
$g\colon M\to\mathbb{S}^5,$ we would like to describe the moduli space of all
noncongruent substantial minimal surfaces $f\colon M\to\mathbb{S}^n$ that are locally
isometric to the curve $g.$ For the class of the exceptional surfaces we denote the
above mentioned space by $\mathcal{M}_n^{\mathrm{e}}(g).$ Hereafter we assume
that $n$ is odd and $M$ is nonflat.

If $M$ is simply connected, it has been proved in \cite[Theorem 3]{VT} that $n\equiv 5\;
\mathrm{mod}\; 6,$ and

$$
\mathcal{M}_n^{\mathrm{e}}(g)=\mathbb{S}_\ast^{m-1}\times\Gamma^m,
$$
where $m=(n+1)/6,$
$$
\mathbb{S}_\ast^{m-1}=\Big\{\bold{a}=(a_1,\dots,a_m)\in\mathbb{S}^{m-1}
\subset\mathbb{R}^m \colon \prod\limits_{j=1}^{m}a_j\neq0\Big\}
$$
and 
$$
\Gamma^m=\big\{{\pmb{\theta}}=(\theta_1,\dots,\theta_m)\in[0,\pi)
\times\cdots\times[0,\pi)\colon 0\leq \theta _1<\cdots<\theta_m<\pi
\big\}.
$$

Our aim in this section is to study the moduli space of noncongruent isometric deformations of a
nonsimply connected pseudoholomorphic curve $g\colon M\to\mathbb{S}^5.$ We
consider the covering map $\Pi\colon\tilde{M}\to M,$
$\tilde{M}$ being the universal cover of $M$ with the metric and orientation that
make $\Pi$ an orientation preserving local isometry. Corresponding objects on
$\tilde{M}$ are denoted with tilde. Then the map $\tilde{g}\colon \tilde{M}\to
\mathbb{S}^5$ with $\tilde{g}=g\circ\Pi$ is a pseudoholomorphic curve. Obviously,
since $\tilde{g}$ is simply connected, we know from \cite[Theorem 3]{VT} that

$$
\mathcal{M}_n^{\mathrm{e}}(\tilde{g})=\mathbb{S}_\ast^{m-1}\times
\Gamma^m.
$$

For any $(\bold{a},\pmb{\theta})\in\mathbb{S}_\ast^{m-1}\times
\bar{\Gamma}^m,$ where $\bar{\Gamma}^{m}$ is the closure of ${\Gamma}^m,$ we consider
the minimal surface 
$\tilde{g}_{\bold{a},\pmb{\theta}}\colon \tilde{M}\to\mathbb{S}^
{6m-1}\subset\mathbb{R}^{6m}$ defined by
\[
\tilde{g}_{\bold{a},{\pmb{\theta}}}=a_1\tilde{g}_{\theta_{1}}\oplus\cdots
\oplus a_m\tilde{g}_{\theta_m},
\]
where $\oplus$
denotes the orthogonal sum with respect to an orthogonal decomposition of
$\mathbb{R}^{6m}.$ Each surface $\tilde{g}_{\theta_{j}}\colon \tilde{M}\to\mathbb{S}
^5, \,j=1,\dots,m,$ is a member of the associated family of $\tilde{g}.$

Clearly, given an exceptional surface $f\colon M\to\mathbb{S}^n
$ in the moduli space of the curve $g,$ the minimal surface $\tilde
{f}\colon \tilde{M}\to\mathbb{S}^n$ with $\tilde{f}=f\circ\Pi$ belongs to the moduli
space $\mathcal{M}_n^{\mathrm{e}}(\tilde{g})$ of the curve
$\tilde{g}.$ Therefore, the moduli space $\mathcal{M}_n^{\mathrm{e}}(g)$ can
be described as the subset of all $(\bold{a},\pmb{\theta})$ in $\mathcal{M}^{\mathrm
{e}}_n(\tilde{g})$ such that $\tilde{g}_{\bold{a},\pmb{\theta}}$ factors as
$F\circ\Pi$ for some exceptional surface $F\colon M \to\mathbb{S}^n.$
We follow this notation throughout this section.

The group $\mathcal{D}$ of \textit{deck transformations} of
the covering map $\Pi\colon \tilde{M}\to M$ consists of all diffeomorphisms
$\sigma\colon\tilde{M}\to\tilde{M}$ such that $\Pi\circ\sigma=\Pi.$

We need the following lemmas.
\begin{lemma}\label{deckbig}
For each $\sigma\in\mathcal{D}$ the surfaces $\tilde{g}_{\bold{a},\pmb{\theta}}$
and $\tilde{g}_{\bold{a},\pmb{\theta}}\circ\sigma$ are congruent for every $(\bold
{a},\pmb{\theta})\in\mathbb{S}_\ast^{m-1}\times
\bar{\Gamma}^m,$ that is there exists $\Phi_{\pmb{\theta}}(\sigma)\in\mathrm{O}
(n+1)$ such that
\begin{equation*}
\tilde{g}_
{\bold{a},{\pmb{\theta}}}\circ\sigma=\Phi_{\pmb{\theta}}(\sigma)\circ\tilde{g}_
{\bold{a},{\pmb{\theta}}}.
\end{equation*}
\end{lemma}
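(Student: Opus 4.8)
The plan is to reduce the statement to the corresponding claim for each member $\tilde{g}_{\theta_j}$ of the associated family separately, and then to assemble the resulting ambient isometries block-diagonally. The starting observation is that a deck transformation $\sigma\in\mathcal{D}$ is an \emph{orientation-preserving} isometry of $(\tilde{M},ds^2)$ satisfying $\tilde{g}\circ\sigma=\tilde{g}$: indeed $\tilde{g}=g\circ\Pi$ and $\Pi\circ\sigma=\Pi$ give $\tilde{g}\circ\sigma=g\circ\Pi\circ\sigma=g\circ\Pi=\tilde{g}$, while $\sigma$ preserves orientation because the orientation of $\tilde{M}$ was chosen to make $\Pi$ orientation-preserving. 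In particular $d\sigma$ commutes with the complex structure $J$, hence with $J_\theta=\cos\theta\,I+\sin\theta\,J$ for every $\theta$.

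The heart of the proof is to show that for each fixed $\theta$ the immersion $\tilde{g}_\theta\circ\sigma$ is congruent to $\tilde{g}_\theta$, i.e. that there is $\phi_\theta(\sigma)\in\mathrm{O}(6)$ with $\tilde{g}_\theta\circ\sigma=\phi_\theta(\sigma)\circ\tilde{g}_\theta$. Both maps are minimal isometric immersions of the simply connected surface $(\tilde{M},ds^2)$ into $\mathbb{S}^5$, so by the fundamental theorem of submanifold theory (cf. \cite{DF}) it suffices to produce a parallel vector bundle isometry of their normal bundles carrying one second fundamental form onto the other. Since precomposition with an isometry does not alter the extrinsic geometry, the second fundamental form of $\tilde{g}_\theta\circ\sigma$ at $p$ is $\alpha^{\tilde{g}_\theta}_{\sigma(p)}(d\sigma_p\,\cdot,d\sigma_p\,\cdot)$; using the defining relation $\alpha^{\tilde{g}_\theta}(X,Y)=T_\theta\,\alpha^{\tilde{g}}(J_\theta X,Y)$ from Section 2, the commutation $d\sigma\circ J_\theta=J_\theta\circ d\sigma$, and finally $\tilde{g}\circ\sigma=\tilde{g}$, one computes (suppressing the normal-bundle identifications)
\begin{align*}
\alpha^{\tilde{g}_\theta\circ\sigma}_p(X,Y)
&=\alpha^{\tilde{g}_\theta}_{\sigma(p)}(d\sigma_p X,d\sigma_p Y)
=T_\theta\,\alpha^{\tilde{g}}_{\sigma(p)}\big(d\sigma_p(J_\theta X),d\sigma_p Y\big)\\
&=T_\theta\,\alpha^{\tilde{g}}_p(J_\theta X,Y).
\end{align*}
Thus $\tilde{g}_\theta\circ\sigma$ carries a second fundamental form of the form (parallel isometry)$\,\circ\,\alpha^{\tilde{g}}(J_\theta\cdot,\cdot)$, i.e. it is again a member of the associated family of $\tilde{g}$ at the \emph{same} parameter $\theta$ and solves the same Gauss--Codazzi--Ricci system as $\tilde{g}_\theta$ (cf. \cite{DG2}); the fundamental theorem then yields the required $\phi_\theta(\sigma)$.

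Given the $\phi_{\theta_j}(\sigma)\in\mathrm{O}(6)$, I would set $\Phi_{\pmb{\theta}}(\sigma)=\phi_{\theta_1}(\sigma)\oplus\cdots\oplus\phi_{\theta_m}(\sigma)\in\mathrm{O}(6m)=\mathrm{O}(n+1)$, block-diagonal with respect to the fixed orthogonal decomposition of $\mathbb{R}^{6m}$. Since precomposition with $\sigma$ commutes with the pointwise orthogonal sum, and each $\phi_{\theta_j}(\sigma)$ is linear (hence commutes with the scalar $a_j$ and acts only on the $j$-th block), one gets
\begin{equation*}
\tilde{g}_{\bold{a},\pmb{\theta}}\circ\sigma=\bigoplus_{j=1}^m a_j\big(\tilde{g}_{\theta_j}\circ\sigma\big)=\bigoplus_{j=1}^m a_j\phi_{\theta_j}(\sigma)\tilde{g}_{\theta_j}=\Phi_{\pmb{\theta}}(\sigma)\circ\tilde{g}_{\bold{a},\pmb{\theta}},
\end{equation*}
which is the desired identity; note that $\Phi_{\pmb{\theta}}(\sigma)$ does not depend on $\bold{a}$, consistently with the notation.

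The step requiring most care is the second paragraph: the careful bookkeeping of the successive normal-bundle identifications (the one induced by reparametrization through $\sigma$, the parallel isometry $T_\theta$, and the identification furnished by $\tilde{g}\circ\sigma=\tilde{g}$) and the verification that their composite is a \emph{parallel} bundle isometry, so that the fundamental theorem genuinely applies. The orientation-preserving property of $\sigma$, which gives $d\sigma\circ J_\theta=J_\theta\circ d\sigma$ rather than an anticommutation, is the small but essential ingredient that keeps the parameter $\theta$ fixed under $\sigma$ instead of sending $\theta\mapsto-\theta$.
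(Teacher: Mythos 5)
Your proof is correct and follows essentially the same route as the paper: establish that each $\tilde g_{\theta_j}\circ\sigma$ is congruent to $\tilde g_{\theta_j}$, and then assemble the resulting ambient isometries block-diagonally with respect to the orthogonal decomposition $\mathbb{R}^{6m}=\mathbb{R}^{6}\oplus\cdots\oplus\mathbb{R}^{6}$. The only difference is that the paper obtains the per-$\theta$ congruence by citing \cite[Proposition 9]{DV}, whereas you rederive it from the fundamental theorem of submanifolds via the associated-family description of the second fundamental form; both are fine.
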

\begin{proof}
It follows from \cite[Proposition 9]{DV} that the surfaces $\tilde{g}_{\theta}$ and $\tilde{g}_
{\theta}\circ\sigma$ are congruent for all $\theta\in[0,\pi).$ Therefore, there exists
$\Psi_\theta(\sigma)\in\mathrm{O}(7)$ such that 
\begin{equation} \label{decksmall}
\tilde{g}_\theta\circ\sigma=
\Psi_\theta(\sigma)\circ\tilde{g}_\theta 
\end{equation}
for every $\theta\in[0,\pi).$

We define the isometry $\Phi_{\pmb{\theta}}(\sigma)\in\mathrm{O}(n+1)$ given by 
\[
\Phi_{\pmb{\theta}}(\sigma)=\Psi_{\theta_1}(\sigma)\oplus\cdots\oplus\Psi_
{\theta_m}(\sigma),
\]
with respect to an orthogonal decomposition $\mathbb{R}^{6m}=\mathbb{R}^{6}
\oplus\cdots\oplus\mathbb{R}^{6}.$
That 
\begin{equation*}
\tilde{g}_
{\bold{a},{\pmb{\theta}}}\circ\sigma=\Phi_{\pmb{\theta}}(\sigma)\circ\tilde{g}_
{\bold{a},{\pmb{\theta}}}
\end{equation*}
holds, follows directly from \eqref{decksmall}.
\end{proof}

\begin{remark}\label{remark1}
The isometry $\Phi_{\pmb{\theta}}(\sigma)$ is real analytic with respect to
${\pmb{\theta}}$ (cf. \cite{EQ}).
\end{remark}
\begin{lemma}\label{lemmaphitheta}
If $(\bold{a},\pmb{\theta})$ belongs to $\mathcal{M}_n^{\mathrm{e}}
(\tilde{g}),$ then $(\bold{a},\pmb {\theta})$ belongs to $\mathcal{M}^{\mathrm
{e}}_n(g)$ if and only if
\begin{equation}\label{phitheta}
\Phi_{\pmb{\theta}}(\mathcal{D})=\left\{\mathrm{Id}\right\}.
\end{equation} 
\end{lemma}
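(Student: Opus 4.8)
The plan is to unwind the description of $\mathcal{M}^{\mathrm{e}}_n(g)$ recalled just before the lemma: a pair $(\bold{a},\pmb{\theta})\in\mathcal{M}^{\mathrm{e}}_n(\tilde g)$ lies in $\mathcal{M}^{\mathrm{e}}_n(g)$ exactly when $\tilde g_{\bold{a},\pmb{\theta}}$ factors as $F\circ\Pi$ for some exceptional surface $F\colon M\to\mathbb{S}^n$. Hence it suffices to prove that the existence of such a factorization is equivalent to $\Phi_{\pmb{\theta}}(\mathcal{D})=\{\mathrm{Id}\}$. Both implications will rest on the intertwining identity $\tilde g_{\bold{a},\pmb{\theta}}\circ\sigma=\Phi_{\pmb{\theta}}(\sigma)\circ\tilde g_{\bold{a},\pmb{\theta}}$ furnished by Lemma \ref{deckbig}, together with the defining property $\Pi\circ\sigma=\Pi$ of the deck transformations $\sigma\in\mathcal{D}$.

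First I would treat the direction in which $F$ is assumed to exist. From $\tilde g_{\bold{a},\pmb{\theta}}=F\circ\Pi$ and $\Pi\circ\sigma=\Pi$ one gets $\tilde g_{\bold{a},\pmb{\theta}}\circ\sigma=F\circ\Pi\circ\sigma=F\circ\Pi=\tilde g_{\bold{a},\pmb{\theta}}$ for every $\sigma\in\mathcal{D}$. Comparing this with Lemma \ref{deckbig} gives $\Phi_{\pmb{\theta}}(\sigma)\circ\tilde g_{\bold{a},\pmb{\theta}}=\tilde g_{\bold{a},\pmb{\theta}}$, so the orthogonal transformation $\Phi_{\pmb{\theta}}(\sigma)\in\mathrm{O}(n+1)$ fixes every point of the image $\tilde g_{\bold{a},\pmb{\theta}}(\tilde M)$. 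Since $(\bold{a},\pmb{\theta})\in\mathcal{M}^{\mathrm{e}}_n(\tilde g)$, the surface $\tilde g_{\bold{a},\pmb{\theta}}$ is substantial in $\mathbb{S}^n$, so its image lies in no linear hyperplane and therefore spans $\mathbb{R}^{n+1}$; a linear isometry fixing a spanning set is the identity, whence $\Phi_{\pmb{\theta}}(\sigma)=\mathrm{Id}$. As $\sigma\in\mathcal{D}$ was arbitrary, $\Phi_{\pmb{\theta}}(\mathcal{D})=\{\mathrm{Id}\}$.

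Conversely, assuming $\Phi_{\pmb{\theta}}(\mathcal{D})=\{\mathrm{Id}\}$, Lemma \ref{deckbig} at once gives $\tilde g_{\bold{a},\pmb{\theta}}\circ\sigma=\tilde g_{\bold{a},\pmb{\theta}}$ for all $\sigma\in\mathcal{D}$, so $\tilde g_{\bold{a},\pmb{\theta}}$ is invariant under the deck group. Since $\Pi$ realizes $M$ as $\tilde M/\mathcal{D}$, such an invariant map descends: putting $F(p)=\tilde g_{\bold{a},\pmb{\theta}}(\tilde p)$ for any $\tilde p\in\Pi^{-1}(p)$ is well defined by the invariance and, because $\Pi$ is a local diffeomorphism, yields a smooth map $F\colon M\to\mathbb{S}^n$ with $F\circ\Pi=\tilde g_{\bold{a},\pmb{\theta}}$. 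It then remains to confirm that $F$ belongs to the moduli space. As $\Pi$ is an orientation-preserving local isometry and $\tilde g_{\bold{a},\pmb{\theta}}$ is a substantial minimal exceptional surface isometric to $\tilde g=g\circ\Pi$, each of the relevant properties is local and hence transfers to $F$: $F$ is an isometric minimal immersion carrying the metric $ds^2$ of $g$, its image coincides with that of $\tilde g_{\bold{a},\pmb{\theta}}$ by surjectivity of $\Pi$ and so is substantial, and the holomorphicity of its Hopf differentials up to the required order is inherited through the local isometry. Thus $(\bold{a},\pmb{\theta})\in\mathcal{M}^{\mathrm{e}}_n(g)$.

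I do not anticipate a genuine obstacle here, since the statement is a descent argument for the universal covering. The only two points that warrant care are the linear-algebra step in the forward direction—that an element of $\mathrm{O}(n+1)$ fixing a linearly spanning subset of $\mathbb{R}^{n+1}$ must be the identity, which is precisely where substantiality is used—and the verification in the backward direction that the descended map $F$ inherits minimality, exceptionality, substantiality and the induced metric; the latter all follow from $\Pi$ being a local isometry and therefore need only be recorded rather than computed.
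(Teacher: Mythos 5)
Your proposal is correct and follows essentially the same route as the paper: both directions hinge on the intertwining identity of Lemma \ref{deckbig}, with substantiality forcing $\Phi_{\pmb{\theta}}(\sigma)=\mathrm{Id}$ in the forward direction and transitivity of the deck group on fibers giving the descent to $F$ in the converse. You merely spell out two steps the paper leaves implicit (the spanning-set argument in $\mathrm{O}(n+1)$ and the local transfer of minimality and exceptionality), which is fine.
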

\begin{proof}
Let $(\bold{a},\pmb{\theta})\in\mathcal{M}^{\mathrm{e}}_
n(g).$ There exists an exceptional surface $F\colon M\to\mathbb
{S}^{n}$ such that 
\begin{equation*}
F\circ\pi=\tilde{g}_{\bold{a},{\pmb{\theta}}}.
\end{equation*} 
Composing with an arbitrary $\sigma\in\mathcal{D}$ and using 
Lemma \ref{deckbig}, we obtain
\begin{equation*}
\tilde{g}_{\bold{a},{\pmb{\theta}}}=\Phi_{\pmb{\theta}}(\sigma)\circ\tilde{g}_
{\bold{a},{\pmb{\theta}}}.
\end{equation*}
The fact that $\tilde{g}_{\bold{a},{\pmb{\theta}}}$ has substantial
codimension yields \eqref{phitheta}.

Conversely, assume that \eqref{phitheta} holds. We will prove that $\tilde{g}_
{\bold{a},{\pmb{\theta}}}$ factors as $F\circ\Pi$ where $F\colon M\to\mathbb{S}
^{n}$ is an exceptional surface. At first we claim that $\tilde{g}_{\bold{a},{\pmb
{\theta}}}$ remains constant on each fiber of the covering map $\Pi.$ Indeed, let
$\tilde{p}_1, \tilde{p}_2$ belong to the fiber $\Pi^{-1}(p)$ for some $p\in M.$ Then there exists
a deck transformation $\sigma$ such that $\sigma(\tilde{p}_1)=\tilde{p}_2.$ Using
Lemma \ref{deckbig} and \eqref{phitheta}, we obtain 
\begin{eqnarray} 
\tilde{g}_{\bold{a},{\pmb{\theta}}}(\tilde{p}_2)
&=&\tilde{g}_{\bold{a},{\pmb{\theta}}}\circ\sigma(\tilde{p}_1) \nonumber\\
&=&\Phi_{\pmb{\theta}}(\sigma)\circ\tilde{g}_{\bold{a},{\pmb{\theta}}}(\tilde{p}
_1) \nonumber\\
&=&\tilde{g}_{\bold{a},{\pmb{\theta}}}(\tilde{p}_1).\nonumber
\end{eqnarray} 
Then $\tilde{g}_
{\bold{a},{\pmb{\theta}}}$ factors as $F\circ\Pi,$ where $F\colon M\to\mathbb{S}
^{n}$ is a minimal surface. It remains to prove that $F\in\mathcal{M}^{\mathrm{e}}_n(g).$ Since $\Pi$ is an
orientation preserving local isometry, it is obvious that $F$ is an exceptional surface.
\end{proof}

The following theorem provides properties of exceptional surfaces that are locally isometric to a pseudoholomorphic curve in
$\mathbb{S}^5.$
\begin{theorem}\label{lineorfinite}
If $g$ is a nonflat pseudoholomorphic curve in $\mathbb{S}^5,$ and $n$ is odd, then
 the moduli space $\mathcal{M}_n^{\mathrm{e}}(g)$ splits as $\mathbb{S}^{m-1}_*\times\Gamma_0,$ where $\Gamma_0$
is a subset of $\Gamma^m.$ If $\Gamma_0$ is a proper subset of $\Gamma^m,$ then
it is locally a disjoint finite
union of $d$-dimensional real analytic subvarieties where $d=0,\dots,m-1$. Moreover,
the subset $\Gamma_0$ has the property that for each point
${\pmb{\theta}}\in\Gamma_0,$ every straight line through ${\pmb{\theta}}$ that
is parallel to every coordinate axis of $\mathbb{R}^m$ either intersects $\Gamma_0$ at finitely
many points, or at a line segment.
\end{theorem}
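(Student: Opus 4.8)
The plan is to realise $\Gamma_0$ explicitly as a real analytic subset of $\Gamma^m$ and then read off its local structure from the structure theory of real analytic sets. I would begin from the description of the moduli space already in hand: writing $\tilde g$ for the lift to the universal cover, we have $\mathcal{M}_n^{\mathrm{e}}(\tilde g)=\mathbb{S}_\ast^{m-1}\times\Gamma^m$ (with $n\equiv5\ \mathrm{mod}\ 6$ and $m=(n+1)/6$; otherwise the moduli space is empty and the assertion is vacuous). By the construction in Lemma \ref{deckbig} the isometry $\Phi_{\pmb{\theta}}(\sigma)=\Psi_{\theta_1}(\sigma)\oplus\cdots\oplus\Psi_{\theta_m}(\sigma)$ depends only on $\pmb{\theta}$ and $\sigma$, and not on $\bold{a}$. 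Hence the criterion \eqref{phitheta} of Lemma \ref{lemmaphitheta} is independent of $\bold{a}$, and setting
\[
\Gamma_0=\big\{\pmb{\theta}\in\Gamma^m\colon \Phi_{\pmb{\theta}}(\sigma)=\mathrm{Id}\ \text{for all}\ \sigma\in\mathcal{D}\big\}
\]
immediately yields $\mathcal{M}_n^{\mathrm{e}}(g)=\mathbb{S}_\ast^{m-1}\times\Gamma_0$ with $\Gamma_0\subseteq\Gamma^m$.

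Next I would encode $\Gamma_0$ as a zero set. For each $\sigma\in\mathcal{D}$ let $\phi^\sigma_{kl}(\pmb{\theta})$ be the entries of $\Phi_{\pmb{\theta}}(\sigma)-\mathrm{Id}$; by Remark \ref{remark1} each $\phi^\sigma_{kl}$ is real analytic in $\pmb{\theta}$ on all of $\mathbb{R}^m$ (the associated-family parameter $\theta$ lives on $\mathbb{S}^1=[0,\pi)$, so $\Phi_{\pmb{\theta}}(\sigma)$ is $\pi$-periodic and real analytic in each $\theta_j$). Then $\Gamma_0$ is exactly the common zero set of the countable family $\{\phi^\sigma_{kl}\}$. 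Since $\mathcal{D}\cong\pi_1(M)$ may be infinite, the crux is to pass from infinitely many equations to finitely many locally: fixing $\pmb{\theta}_0$ and using that the ring of germs of real analytic functions at $\pmb{\theta}_0$ is Noetherian, the ideal generated by the germs of all $\phi^\sigma_{kl}$ is finitely generated; consequently, in a neighbourhood of $\pmb{\theta}_0$ the set $\Gamma_0$ coincides with the zero set of finitely many real analytic functions, so it is a real analytic variety near $\pmb{\theta}_0$.

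With $\Gamma_0$ exhibited as a real analytic variety, the structure theorem for real analytic sets ({\L}ojasiewicz) provides, in a neighbourhood of each of its points, a decomposition of $\Gamma_0$ as a finite disjoint union of real analytic submanifolds of dimensions between $0$ and $m$. To rule out the top dimension when $\Gamma_0\neq\Gamma^m$, I would invoke the identity theorem on the connected set $\Gamma^m$ (it is convex, being cut out by linear inequalities, hence connected with connected interior): if some stratum were $m$-dimensional, then $\Gamma_0$ would contain a nonempty open subset on which every $\phi^\sigma_{kl}$ vanishes, and real analyticity would force $\phi^\sigma_{kl}\equiv0$ throughout $\Gamma^m$ for all $\sigma,k,l$, whence $\Gamma_0=\Gamma^m$, contrary to hypothesis. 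Thus for a proper $\Gamma_0$ only strata of dimension $d=0,\dots,m-1$ occur, which is the asserted local description.

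Finally, for the line property I would restrict to one real variable. Fix $\pmb{\theta}\in\Gamma_0$ and a coordinate direction $e_j$, and set $I=\{t\in\mathbb{R}\colon \pmb{\theta}+te_j\in\Gamma^m\}$, an interval by convexity of $\Gamma^m$ and bounded since $\Gamma^m\subseteq[0,\pi)^m$. Each restriction $t\mapsto\phi^\sigma_{kl}(\pmb{\theta}+te_j)$ is a real analytic function of one variable. If all of them vanish identically on $I$, the whole segment $\{\pmb{\theta}+te_j\colon t\in I\}$ lies in $\Gamma_0$ and the intersection is a line segment; otherwise some restriction is not identically zero, and being real analytic on a neighbourhood of the compact closure $\bar I$ it has there only finitely many zeros, so $\Gamma_0$ meets the line in a finite set. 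This dichotomy is exactly the stated property. The main obstacle I anticipate is precisely the passage from the infinite family of deck-transformation equations to a locally finite real analytic description; once that is secured, the dimension bound follows from the identity theorem and the line property from the one-variable analytic dichotomy.
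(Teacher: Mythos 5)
Your proposal is correct and follows essentially the same route as the paper: both reduce to the criterion $\Phi_{\pmb{\theta}}(\mathcal{D})=\{\mathrm{Id}\}$ of Lemma \ref{lemmaphitheta}, use the real analyticity of $\pmb{\theta}\mapsto\Phi_{\pmb{\theta}}(\sigma)$ (Remark \ref{remark1}) together with {\L}ojasiewicz's structure theorem for the local decomposition, and obtain the line dichotomy from one-variable analyticity (the paper iterates the mean value theorem to kill all derivatives at an accumulation point, which is the same analytic-continuation idea as your ``not identically zero implies finitely many zeros on a compact interval''). Your write-up is in fact slightly more careful than the paper's on two points it leaves implicit: the Noetherian reduction of the infinite family of deck-transformation equations to a locally finite one, and the identity-theorem argument excluding an $m$-dimensional stratum when $\Gamma_0$ is proper.
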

\begin{proof}
Lemma \ref{lemmaphitheta} implies that $\mathbb{S}_*^{m-1}\times
\left\{{\pmb{\theta}}\right\}$ is contained in $\mathcal{M}_n^{\mathrm{e}}(g)$ for
each $(\bold{a},{\pmb{\theta}})\in\mathcal{M}_n^{\mathrm{e}}(g).$ Therefore,
the moduli space splits as
$$
\mathcal{M}_n^{\mathrm{e}}(g)=\mathbb{S}_*^{m-1}\times\Gamma_0,
$$
where $\Gamma_0$ is a subset of $\Gamma^m.$ Additionally, Lemma \ref{lemmaphitheta}
implies that ${\pmb{\theta}}\in\Gamma_0$ if and only if $\Phi_{\pmb{\theta}}(\mathcal{D})
=\left\{\mathrm{Id}\right\}.$ Fix $\sigma\in\mathcal{D}$. Then $\Phi_{\pmb{\theta}}(\sigma)
=\mathrm{Id}$ and $\Gamma_0$ is a real analytic set (see Remark \ref{remark1}). If $\Gamma_0$ is a proper subset of
$\Gamma^m$, according to Lojasiewicz's structure theorem \cite[Theorem
6.3.3]{KP}) the set $\Gamma_0$ locally decomposes as
\[
\Gamma_0=\mathcal{V}^0\cup\mathcal{V}^1\cup\cdots\cup\mathcal{V}^{m-1},
\]
where each $\mathcal{V}^d,\ 0\le d\le m-1,$ is either empty or a disjoint finite union
of $d$-dimensional real analytic subvarieties.

Let ${\pmb{\theta}}=(\theta_1,\dots,\theta_l,\dots,\theta_m)\in\Gamma_0.$ Suppose that the
straight line through ${\pmb{\theta}}$ that is parallel to the $l$-th coordinate axis of $\mathbb{R}^m$
is not a finite set. Thus, this line contains a sequence ${\pmb{\theta}}^{(i)}=(\theta_1,\dots,
\theta_l^{(i)},\dots,\theta_m), i\in\mathbb{N}.$ By passing if necessary to a subsequence, we
may assume that this sequence converges to ${\pmb{\theta}}^{\infty}=(\theta_1,\dots,
\theta_l^{\infty},\dots,\theta_m),$ where $\theta_l^{\infty}=\lim\theta_l^{(i)}.$
Clearly ${\theta}_{l-1}\le{\theta}_l^{\infty}\le{\theta}_{l+1}.$
At first we suppose that ${\theta}_{l-1}<{\theta}_l^{\infty}<{\theta}_{l+1},$ that is ${\pmb{\theta}}^{\infty}\in\Gamma_0.$ Fix $\sigma\in
\mathcal{D}.$ Lemma \ref{lemmaphitheta} implies that
$\Phi_{{\pmb{\theta}}^{(i)}}(\sigma)=\mathrm{Id}$ and consequently $\Phi_{{\pmb{\theta}}^{\infty}}(\sigma)=\mathrm{Id}$. 
We define the function
$$
h(\theta)=\left(\Phi_{({\theta}_1,\dots,
{\theta}_{l-1},\theta,{\theta}_{l+1},\dots{\theta}_m)}(\sigma)\right)_{ij}, \, \theta\in[\theta_{l-1},\theta_{l+1}),
$$
where $\big(\Phi_{\pmb{
\theta}}(\sigma)\big)_{ij}$ denotes the $(i,j)$-element of the matrix of $\Phi_{\pmb{
\theta}}(\sigma)$ with respect to the standard basis of $\mathbb{R}^{n+1}$.
From the mean value theorem we have that there exists $\xi_1^{(i)}$ between $\theta_l^{(i)}$ and
${\theta}_l^{\infty}$ such that $(dh/d\theta)(\xi_1^{(i)})=0$ and hence $(dh/d\theta)(\theta_l^{\infty})
=0.$ Applying
again the mean value theorem, we obtain
that there exists $\xi_2^{(i)}$ between $\xi_1^{(i)}$ and $\theta_l^{\infty}$ such that
$(d^2h/d\theta^2)(\xi_2^{(i)})=0.$ Inductively, we have
that the $k$-th derivative satisfies $(d^kh/d\theta^k)(\theta_l^{\infty})
=0$ for any 
$k.$ The analyticity of $h$ (see Remark \ref{remark1})
yields that $h=\delta_{ij}$ on $[\theta_{l-1},\theta_{l+1}),$ where $\delta_{ij}$ is the Kr\"onecker delta.

Now without loss of generality, assume that ${\theta}_{l-1}={\theta}_l^{\infty}<{\theta}_{l+1}.$
Clearly ${\pmb{\theta}}^{\infty}\notin\Gamma_0.$ We fix $\sigma\in\mathcal{D}$
and extend $\Phi_{\pmb{\theta}}$ in the obvious way. Then $\Phi_{{\pmb{\theta}}^{(i)}}
(\sigma)=\mathrm{Id}$ and consequently $\Phi_{{\pmb{\theta}}^{\infty}}(\sigma)=\mathrm{Id}$
and the claim follows as before.
\end{proof}

We now provide a result for compact pseudoholomorphic curves in $\mathbb{S}^5.$

\begin{theorem}\label{compactps}
If $g$ is a compact pseudoholomorphic curve in $\mathbb{S}^5$ that is not
homeomorphic to the torus, then the moduli space $\mathcal{M}_n^{\mathrm{e}}
(g),$ with $n$ odd, is given by $\mathcal{M}_n^{\mathrm{e}}(g)=\mathbb{S}^{m-1}_*\times
\Gamma_0,$ where $\Gamma_0$ is a proper subset  of $\Gamma^m$ that is locally a disjoint finite
union of $d$-dimensional real analytic subvarieties where $d=0,\dots,m-1.$ Moreover,
every straight line through each point ${\pmb{\theta}}\in\Gamma_0$ that
is parallel to every coordinate axis of $\mathbb{R}^m$ intersects $\Gamma_0$ at finitely
many points.
\end{theorem}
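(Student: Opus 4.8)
The plan is to reduce everything to a single monodromy statement and then to extract a contradiction from the topological hypothesis. First I would record what Theorem \ref{lineorfinite} already provides: the splitting $\mathcal{M}_n^{\mathrm{e}}(g)=\mathbb{S}^{m-1}_*\times\Gamma_0$, and, as soon as $\Gamma_0$ is a proper subset of $\Gamma^m$, both the local decomposition into $d$-dimensional real analytic subvarieties and the dichotomy that every coordinate-parallel line meets $\Gamma_0$ either finitely or in a segment. Thus it suffices to exclude the segment alternative and to prove $\Gamma_0\neq\Gamma^m$, and I claim both follow from one fact about the maps $\Psi_\theta(\sigma)$ of Lemma \ref{deckbig}. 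Two observations drive this. Since $\tilde g_0=g\circ\Pi$ and $\Pi\circ\sigma=\Pi$, we have $\Psi_0(\sigma)=\mathrm{Id}$ for every $\sigma\in\mathcal{D}$; and by Remark \ref{remark1} the map $\theta\mapsto\Psi_\theta(\sigma)$ is real analytic on the connected interval $[0,\pi)$. If a coordinate-parallel segment, say in the $l$-th direction, were contained in $\Gamma_0$, then Lemma \ref{lemmaphitheta} together with the splitting $\Phi_{\pmb\theta}(\sigma)=\Psi_{\theta_1}(\sigma)\oplus\cdots\oplus\Psi_{\theta_m}(\sigma)$ would give $\Psi_\theta(\sigma)=\mathrm{Id}$ on a nondegenerate subinterval, and real analyticity would propagate this to all $\theta\in[0,\pi)$ and all $\sigma$, forcing $\Phi_{\pmb\theta}(\mathcal{D})=\{\mathrm{Id}\}$ for every $\pmb\theta\in\Gamma^m$, i.e. $\Gamma_0=\Gamma^m$. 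The same computation shows $\Gamma_0=\Gamma^m$ is equivalent to $\Psi_\theta(\sigma)=\mathrm{Id}$ for all $\theta,\sigma$, that is, to the entire associated family $\{g_\theta\}$ descending to $M$. Hence both desired conclusions follow once I prove that \emph{the associated family of $g$ does not descend to the compact non-torus base $M$.}

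To prove non-descent I would argue by contradiction. Suppose $\Psi_\theta(\sigma)=\mathrm{Id}$ for all $\theta$ and $\sigma$; then every member $g_\theta\colon M\to\mathbb{S}^5$ is a globally defined compact pseudoholomorphic curve isometric to $g$ (Theorem \ref{eschvlach} identifies the associated family with the family of such curves). I first pin down the topology. Since $g$ is $1$-isotropic, Lemma \ref{systematic} gives $K_1^\perp=1-K$, and Theorem \ref{eschvlach} gives $\Delta\log(1-K)=6K$ with $1-K$ of absolute value type. Integrating over $M$ and combining Lemma \ref{forglobal}(i) with Gauss--Bonnet yields $-2\pi N(1-K)=6\int_M K\,dA=12\pi\chi(M)$, so $N(1-K)=-6\chi(M)\ge0$ and therefore $\chi(M)\le0$. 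As $M$ is not the torus and the sphere is excluded by $\chi(M)\le0$, we obtain $\chi(M)<0$, and $1-K$ genuinely vanishes at the isolated totally geodesic points where the first curvature ellipse collapses (these are even-order zeros by Lemma \ref{dena}).

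The main obstacle is converting the global definition of the whole family into a contradiction with $\chi(M)<0$, and here is the mechanism I would pursue. Because $g$ is $1$-isotropic, $\mathcal{E}_1$ is a circle of radius $\sqrt{(1-K)/2}$ in the oriented plane bundle $N_1^g$, and passing to $g_\theta$ rotates the osculating data in $N_1^g$ by an angle governed by $\theta$. I would express $\Psi_\theta(\sigma)$, to first order in $\theta$, as a period $\oint_{\gamma_\sigma}\beta$ of a closed $\mathfrak{so}(7)$-valued one-form $\beta$ manufactured from $\alpha^g(J\,\cdot,\cdot)$; full descent forces all such periods (and all higher $\theta$-derivatives at $\theta=0$) to vanish, so $\beta$ is exact, which says precisely that the osculating circle field extends across the collapse points of $\mathcal{E}_1$ with vanishing total winding. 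That total winding is the Euler number of $N_1^g$, which by the standard osculating-bundle identities is pinned to $\chi(M)$ and to $N(1-K)=-6\chi(M)$, and with $\chi(M)<0$ it cannot vanish. This contradiction yields non-descent and completes the argument. I expect the delicate step to be the precise identification of the period of $\beta$ with the Euler number of $N_1^g$ and the verification that the collapse points contribute strictly; this is exactly where compactness and $\chi(M)\neq0$ are used, and the flat torus of Remark \ref{flattori}, where $\chi(M)=0$ and the family does descend, shows the hypothesis is sharp.
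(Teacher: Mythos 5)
Your first paragraph is a correct and even attractive reorganization: using Lemma \ref{lemmaphitheta}, the block splitting $\Phi_{\pmb{\theta}}(\sigma)=\Psi_{\theta_1}(\sigma)\oplus\cdots\oplus\Psi_{\theta_m}(\sigma)$ and the real analyticity of $\theta\mapsto\Psi_\theta(\sigma)$, both the segment alternative of Theorem \ref{lineorfinite} and the equality $\Gamma_0=\Gamma^m$ do collapse to the single statement $\Psi_\theta(\mathcal{D})=\{\mathrm{Id}\}$ for all $\theta$, i.e. to full descent of the associated family; and your Gauss--Bonnet computation $N(1-K)=-6\chi(M)$ correctly yields $\chi(M)<0$ once the sphere and the torus are excluded. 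The genuine gap is in the non-descent argument, and I do not think it is merely a ``delicate step'' that can be filled: it rests on an identification of objects living in different cohomology groups. The first-order obstruction to descent is a collection of periods of a (closed, if everything works) one-form $\beta$ over the cycles $\gamma_\sigma$, $\sigma\in\mathcal{D}$, which represent classes in $H_1(M)$; this is an $H^1$-type quantity. The Euler number of $N_1^g$ is the sum of local winding numbers of a section around the isolated totally geodesic points, an $H^2$-type quantity. Vanishing of $\oint_{\gamma_\sigma}\beta$ for all $\sigma$ says nothing about the local periods of $\beta$ around the collapse points of $\mathcal{E}_1$: small loops around those points are null-homotopic in $M$, hence not in the span of the $\gamma_\sigma$ inside $H_1(M\smallsetminus\{\text{collapse points}\})$. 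Worse, if $\beta$ really is closed off the collapse points, Stokes' theorem already forces the \emph{total} winding around them to vanish for any such $\beta$, descent or not, so ``total winding $=0$'' cannot possibly detect $\chi(N_1^g)=-\chi(M)\neq 0$. The chain ``periods vanish $\Rightarrow\beta$ exact $\Rightarrow$ total winding zero $\Rightarrow\chi(N_1^g)=0$'' therefore breaks at every link, and the Euler class is simply not the obstruction to descent.

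For comparison, the paper closes exactly this hole by analysis rather than topology, and in fact proves the stronger statement directly: if infinitely many points $\pmb{\theta}_j$ on a coordinate-parallel line lay in $\Gamma_0$, the corresponding surfaces $F_j\colon M\to\mathbb{S}^n$ with $F_j\circ\Pi=\tilde{g}_{\mathbf{a},\pmb{\theta}_j}$ would have linearly independent coordinate functions in the first $\mathbb{R}^6$-block (linear independence is where nonflatness, hence the non-torus hypothesis, enters, via the argument of \cite{DV}); since all these functions are eigenfunctions of $\Delta$ with eigenvalue $2$ and $M$ is compact, this contradicts the finite dimensionality of that eigenspace. If you wish to keep your reduction to non-descent, you must replace the Euler-class mechanism with an input of this spectral kind; as it stands, the proposal does not prove that the associated family fails to descend.
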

\begin{proof}
Suppose to the contrary that the intersection of $\Gamma_0$ with the straight line through $\pmb
{\theta}$ that is parallel to the first coordinate axis is an infinite set. For a fixed $\bold{a}\in
\mathbb{S}_*^{m-1},$ we choose $\pmb{\theta}_1,\dots, \pmb
{\theta}_N\in\Gamma_0$ that belong to this straight line. Hence $(\bold{a},\pmb
{\theta}_j)\in\mathcal{M}^{\mathrm{e}}_n(g)$ for all $\pmb{\theta}_j=(
\theta_{j1},\dots,\theta_{jm}),\ j=1,\dots,N.$ Consequently there exist exceptional
surfaces $F_j\colon M\to\mathbb{S}^n$ such that $F_j\circ\pi=\tilde{g}_{\bold{a},
\pmb{\theta}_j}.$

We claim that the set of all coordinate functions associated to vectors $\bold{v}=(v_1,0,\dots,0)$ in $\mathbb{R}^{6m}$
of all surfaces $F_j$'s are linearly independent. It is
sufficient to prove that if
\begin{equation}\label{linear}
\sum\limits_{j=1}^N\langle F_j,\bold{v}\rangle=0,
\end{equation}
then $\bold{v}=0.$ From \eqref{linear} we obtain 
$$
\sum\limits_{j=1}^N\langle F_j\circ\pi,\bold{v}\rangle=0,
$$
or equivalently 
$$
a_1\sum\limits_{j=1}^N\langle \tilde{g}_{\theta_{j1}},v_1\rangle=0.
$$
In analogy with the argument in the proof of \cite[Theorem 2]{DV}, we finally conclude
that $v_1=0$ and the claim is proved.

The contradiction follows easily since the coordinate functions of the surfaces $F_j$'s are eigenfunctions of
the Laplacian operator with corresponding eigenvalue 2 and the vector space of the eigenfunctions 
has finite dimension. Hence $\Gamma_0\neq\Gamma^m$ and the proof follows from Theorem \ref{lineorfinite}.
\end{proof}

\begin{remark}\label{flattori}
The assumption in Theorem \ref{compactps} that the pseudoholomorphic curve
$g$ is not homeomorphic to the torus
is essential and can not be dropped. According to results due to Kenmotsu \cite{K2, K11}
the moduli space of all minimal surfaces in odd dimensional spheres that are isometric to
a flat pseudoholomorphic torus in $\mathbb{S}^5$ is not a finite set. 
\end{remark}

\begin{proof}[Proof of Theorem \ref{mikradim}]
It follows from \cite[Theorem 5]{VT} and \cite[Corollary 1]{VT} that any minimal surface $f\colon M\to\mathbb{S}^n$ that is
isometric to $g$ is exceptional and $n=5.$ Then Theorem \ref{compactps} above completes the proof.
\end{proof}

\section{Rigidity of isotropic pseudoholomorphic curves in $\mathbb{S}^6$}
In this section, we study the rigidity of isotropic pseudoholomorphic curves in $\mathbb{S}
^6$ among minimal surfaces in spheres. We prove Theorem \ref{forintro}.

\begin{proof}[Proof of Theorem \ref{forintro}]
According to \cite[Theorem 2]{V16}, the function $1-K$ is of absolute value type.
If the zero set of the function $1-K$ is empty, then from condition ($\ast\ast$) and the Gauss-Bonnet theorem
it follows that $M$ is homeomorphic to the sphere. From \cite{Calabi} we have that $f$ is isotropic and from
\cite{V} it follows that $n=6$ and $f$ is congruent to $g.$
Now suppose that the zero set of the function $1-K$
is the finite set $M_0=\left\{ p_1,\dots,p_m\right\}$ with corresponding order $\mathrm{ord}_{p_j}(1-K)=2k_j.$
For each point $p_j, j=1,\dots,m,$ we choose a
local complex coordinate $z$ such that $p_j$ corresponds to $z=0$\ and the induced
metric is written as $ds^2=F|dz|^2.$ On a neighbourhood of \ $p_j,$ we have that
\begin{equation}\label{againavt}
1-K=|z|^{2k_j}u_0,
\end{equation}
where $u_0$ is a smooth positive function.

We claim that $f$ is $1$-isotropic. The first
Hopf differential $\Phi _1=f_1dz^4$ is globally defined and holomorphic. Hence either $\Phi _1$ is identically zero, or its zeros are isolated.  Suppose to the contrary
that $\Phi _1$ is not identically zero. The Gauss equation \eqref{Gausseqa2} yields that each $p_j$ is a totally geodesic point. It follows from the definition of the first Hopf differential that $\Phi _1$ vanishes at each $p_j.$
Hence we may write $f_1=z^{l_{1j}}\psi _1$\ around\ $p_j,$ where
$l_{1j}$\ is the order of $\Phi_1$\ at $p_j,$ and $\psi _1$ is a nonzero
holomorphic function. Bearing in mind \eqref{what}, we obtain
\begin{equation}\label{forbelow}
\frac{1}{4}\left\Vert \alpha_2\right\Vert^4-(K_1^{\perp})^2=(2F^{-1})^{4}|\psi _1|^2|z|^{2l_{1j}}
\end{equation}
around $p_j.$ We now consider the function $u_1\colon M
\smallsetminus M_0\to \mathbb{R}$ defined by
$$
u_1=\frac{\left(\frac{1}{4}\left\Vert \alpha_2\right\Vert^4-(K_1^{\perp})^2\right)^3}{(1-K)^4}.
$$
From (\ref{againavt}) and \eqref{forbelow} it follows that the function $u_1$
around $p_j,$ is written as 
\begin{equation}\label{u_1around}
u_1=(2F^{-1})^{12}u_0^{-4}|\psi _1|^6|z|^{6l_{1j}-8k_j}.
\end{equation}
Using \eqref{Gausseqa2} we obtain $u_1\leq (1-K)^2.$ Thus, from \eqref{againavt} and \eqref{u_1around} we deduce that $l_{1j}\geq 2k_j$ and we
can extend $u_1$ to a smooth function on $M.$ From Proposition \ref{3i}(ii) for $r=1,$ it follows that
\begin{equation*}
\Delta \log \left(\left\Vert \alpha_{2}\right\Vert^2+2K_1^{\perp}\right) =2\big(2K-K_1^{\ast}\big)
\end{equation*}
and 
\begin{equation*}
\Delta \log \left(\left\Vert \alpha_{2}\right\Vert^2-2K_1^{\perp}\right) =2\big(2K+K_1^{\ast}\big).
\end{equation*}
Summing up, we obtain
\begin{equation*}
\Delta \log \left(\left\Vert \alpha_{2}\right\Vert^4-4(K_1^{\perp})^2\right) =8K.
\end{equation*}
Combining the last equation with the condition ($\ast\ast$), we have
\begin{equation*}
\Delta \log \left(\left\Vert \alpha_{2}\right\Vert^4-4(K_1^{\perp})^2\right)^3=\Delta\log (1-K)^4+4,
\end{equation*}
or equivalently $\Delta\log u_1=4$ away from the isolated
zeros of $u_1$. Thus, by continuity $\Delta u_1\ge 4u_1\ge0,$ and from the maximum principle we have that this holds only for $u_1\equiv0,$ or equivalently only if $\left\Vert \alpha_2\right\Vert^4=4(K_1^{\perp})^2.$ Lemma \ref{systematic} implies that $\Phi _1=0$ and this contradicts our assumption that $\Phi _1$ is not identically zero. Hence, $\Phi _1$ is identically zero and from Lemma \ref{systematic} yields that $f$ is 1-isotropic.
Proposition \ref{3i}(i) for $s=1$ implies that
\begin{equation*}
\Delta \log (1-K)=2\big(2K-K_1^{\ast}\big),
\end{equation*}
which using the condition ($\ast\ast$) yields
\begin{equation}\label{k*fin}
K_1^{\ast}=\frac{1}{2}-K.
\end{equation}
Since $f$ is 1-isotropic, we know from Lemma \ref{systematic} that $K_1^\perp=1-K.$ Proposition \ref{5} and \eqref{k*fin} yield that 
\begin{equation}\label{a3sketo}
\left\Vert \alpha_{3}\right\Vert^2=1-K.
\end{equation}

We now claim that $f$ is also $2$-isotropic. From Proposition \ref{neoksanaafththfora} we know that $\Phi _2=f_2dz^{6}$
is globally defined. Theorem \ref{ena} implies that it is also holomorphic. Hence either $\Phi _2$ is identically zero or its zeros are isolated.
In the former case, from Lemma \ref{systematic} we have that $f$ is $2$-isotropic. Assume now to the contrary that $\Phi _2$ is not identically zero. Obviously, we have that $\alpha_3$ vanishes at each $p_j$ and consequently from the definition of the second Hopf differential, also $\Phi _2$ vanishes at each
$p_j.$ Hence we may write $f_2=z^{l_{2j}}\psi _2$\ around\ $p_j,$
where $l_{2j}$\ is the order of $\Phi _2$\ at $p_j,$ and $\psi _2$ is a
nonzero holomorphic function. Bearing in mind (\ref{what}),\ we obtain 
\begin{equation}\label{eqa3}
\left\Vert \alpha_{3}\right\Vert^4-16(K_2^{\perp})^2=2^{8}F^{-6}|\psi _2|^2|z|^{2l_{2j}}
\end{equation}
around $p_j.$  We now consider the function $u_2\colon M
\smallsetminus M_0\to \mathbb{R}$ defined by
$$
u_2=\frac{\left\Vert \alpha_{3}\right\Vert^4-16(K_2^{\perp})^2}{(1-K)^{2}}.
$$
 In view of (\ref{againavt}) and \eqref{eqa3}, it follows that the function $u_2$ around $p_j$ is written as
\begin{equation}\label{u2}
u_2=2^{8}F^{-6}u_0^{-2}|\psi
_2|^2|z|^{2l_{2j}-4k_j}.
\end{equation}

Using \eqref{a3sketo} we derive that $u_2\leq 1.$ From \eqref{againavt} and (\ref{u2}) we deduce that $l_{2j}\geq 2k_j$ and we
can extend $u_2$ to a smooth function on $M.$ Proposition \ref{3i}(ii) for $r=2$ implies that 
\begin{equation*}
\Delta \log \left(\left\Vert \alpha_{3}\right\Vert^2+4K_2^{\perp}\right) =2\big(3K-K_2^{\ast}\big)
\end{equation*}
and 
\begin{equation*}
\Delta \log \left(\left\Vert \alpha_{3}\right\Vert^2-4K_2^{\perp}\right) =2\big(3K+K_2^{\ast}\big).
\end{equation*}
Summing up, we obtain
\begin{equation*}
\Delta \log \left(\left\Vert \alpha_{3}\right\Vert^4-16(K_2^{\perp})^2\right) =12K.
\end{equation*}
Combining the last equation with the condition ($\ast\ast$), we have that $\Delta\log u_2=2$ away from the isolated
zeros of $u_2$. Thus, by continuity $\Delta u_2\ge 2u_2\ge0,$ and from the maximum principle we have that this holds only for $u_2\equiv0,$  or equivalently only if $\left\Vert \alpha_3\right\Vert^4=16(K_2^{\perp})^2.$ Lemma \ref{systematic} implies that $\Phi _2=0$ and this contradicts our assumption that $\Phi _2$ is not identically zero. Hence, $\Phi _2$ is identically zero and Lemma \ref{systematic} implies that $f$ is 2-isotropic.
Now Proposition \ref{3i}(i) for $s=2$ yields
\begin{equation*}
\Delta \log \left\Vert \alpha_{3}\right\Vert^2 =2\big(3K-K_2^{\ast}\big),
\end{equation*}
 and combining this with condition $(\ast\ast)$ we obtain $K_2^{\ast}=1/2.$

Since $f$ is 2-isotropic, from Lemma \ref{systematic} and \eqref{a3sketo} we have $K_2^\perp=(1-K)/4.$ Using that $K_2^{\ast}=1/2,$ 
Proposition \ref{5} for $r=2$ implies that $\alpha_{4}=0.$ Therefore $n=6$ and the surface $f$ is congruent to $g$ (cf. \cite[Theorem A]{V}).
\end{proof}

We now prove the following local result for exceptional surfaces.
\begin{theorem} \label{congruent2iso}
Let $f\colon M\to\mathbb{S}^n$ be a substantial exceptional surface that is isometric to an isotropic pseudoholomorphic curve $g\colon M \to\mathbb{S}^6.$ Then $n=6$ and $f$ is congruent to $g.$
\end{theorem}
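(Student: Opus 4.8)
The plan is to establish Theorem \ref{congruent2iso} as a local analogue of Theorem \ref{forintro}, replacing the global tools (Gauss--Bonnet, the maximum principle on a compact surface) with the assumption that $f$ is exceptional. The skeleton of the argument should run in parallel: first show that $f$ is $1$-isotropic, then that $f$ is $2$-isotropic, and finally conclude from Proposition \ref{5} that $\alpha_4=0$, forcing $n=6$ and congruence with $g$ via \cite[Theorem A]{V}. The key algebraic identities---equation \eqref{k*fin} giving $K_1^\ast=\tfrac12-K$, equation \eqref{a3sketo} giving $\|\alpha_3\|^2=1-K$, and the eventual value $K_2^\ast=\tfrac12$---are purely pointwise consequences of the condition $(\ast\ast)$ together with Proposition \ref{3i}(i) and Lemma \ref{systematic}, so they carry over verbatim once isotropy is known. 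Thus the entire content of the proof is pushing isotropy through in the absence of compactness.

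To prove $1$-isotropy I would argue that $\Phi_1=f_1\,dz^4$ is globally defined and holomorphic (by the Codazzi equation), hence its zeros are isolated unless it vanishes identically; suppose for contradiction it does not. The plan is to form the same auxiliary function
\[
u_1=\frac{\left(\tfrac14\|\alpha_2\|^4-(K_1^\perp)^2\right)^3}{(1-K)^4},
\]
and to show, exactly as before, that $u_1$ extends smoothly across the (isolated) totally geodesic points and satisfies $\Delta\log u_1=4$ away from its zeros. Here is where exceptionality substitutes for compactness: since $f$ is exceptional, $1-K$ is of absolute value type (this is where I would invoke Theorem \ref{eschvlach}/condition $(\ast\ast)$ combined with Proposition \ref{neoksanaafththfora}), and more importantly $u_1$ itself is seen to be of absolute value type by Lemma \ref{dena}, because $\log u_1$ is harmonic off its zero set and $u_1\ge 0$ is real analytic. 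Then Lemma \ref{forglobal}(i) is unavailable locally, so instead I would use that $\Delta\log u_1=4>0$ together with $u_1\le(1-K)^2$ (from \eqref{Gausseqa2}) to derive a contradiction: an absolute value type function whose log is \emph{strictly subharmonic} with positive Laplacian cannot exist unless it vanishes identically, since $\log u_1$ would be a genuine subharmonic-with-a-strict-sign function that is $-\infty$ at its isolated zeros, which is incompatible with the local structure $u_1=|t_0|^2 u_1'$ forced by holomorphic type. Concluding $u_1\equiv0$ gives $\Phi_1=0$ by Lemma \ref{systematic}, a contradiction; hence $f$ is $1$-isotropic.

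The step for $2$-isotropy is identical in structure, now with $\Phi_2=f_2\,dz^6$ (globally defined and holomorphic by Theorem \ref{ena}, using that $f$ is exceptional so that $\Phi_2$ is holomorphic), the function
\[
u_2=\frac{\|\alpha_3\|^4-16(K_2^\perp)^2}{(1-K)^2},
\]
and the pointwise identity $\Delta\log u_2=2$ derived from Proposition \ref{3i}(ii) and $(\ast\ast)$. Once $u_2\equiv0$ is forced, Lemma \ref{systematic} gives $\Phi_2=0$, i.e.\ $2$-isotropy, whence $K_2^\ast=\tfrac12$ and $K_2^\perp=(1-K)/4$, and Proposition \ref{5} for $r=2$ yields $\alpha_4=0$, so $n=6$ and $f\cong g$. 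The main obstacle is precisely the replacement of the compact maximum-principle argument: in the proof of Theorem \ref{forintro} one integrates $\Delta u\ge0$ over a closed surface, but here $M$ need not be compact, so I expect the crux to be arguing---via the absolute value type structure and Lemma \ref{dena}---that a nonnegative function $u$ with $\Delta\log u$ equal to a \emph{positive constant} away from isolated zeros is forced to vanish identically. Making that local subharmonicity argument airtight (in particular handling the behaviour of $\log u$ near the isolated zeros, where it tends to $-\infty$) is the delicate point; everything else is a transcription of the computations already carried out for Theorem \ref{forintro}.
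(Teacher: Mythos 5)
Your plan founders at exactly the point you identify as the crux. The claim that a nonnegative function $u$ with $\Delta\log u$ equal to a positive constant away from isolated zeros must vanish identically is false in the local (noncompact) setting: on a plane domain with the flat metric, $u=|z|^2e^{x^2+y^2}$ satisfies $\Delta\log u=4$ away from the origin, is of absolute value type, and is not identically zero. The compact proof of Theorem \ref{forintro} works because integrating $\Delta\log u$ over a closed surface gives $-2\pi N(u)\le 0$ (Lemma \ref{forglobal}), which is incompatible with $\Delta\log u>0$; there is no local substitute for that integral identity, and the behaviour of $\log u$ near isolated zeros ($\log u\to-\infty$ there) is perfectly consistent with $\Delta\log u=4$ off the zero set. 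So the $1$-isotropy and $2$-isotropy steps, as you have set them up, cannot be closed.

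The paper uses exceptionality in a completely different and much stronger way than you do. You invoke it only to get holomorphy of $\Phi_2$; but by Theorem \ref{ena}, exceptionality means every higher curvature ellipse has \emph{constant eccentricity}, hence the ratio $\rho_s:=2^sK_s^\perp/\|\alpha_{s+1}\|^2=2\kappa_s\mu_s/(\kappa_s^2+\mu_s^2)$ is a \emph{constant} for each $s$. Consequently $\|\alpha_2\|^2\pm 2K_1^\perp=2(1\pm\rho_1)(1-K)$ and $\|\alpha_3\|^2\pm 4K_2^\perp=(1\pm\rho_2)(1-K)$ are constant multiples of $1-K$, so the log-Laplacian identities of Proposition \ref{3i}(ii) become exact pointwise equations for $\Delta\log(1-K)$. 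If $\rho_1\neq1$ these give $\Delta\log(1-K)=2K+1$, which together with $(\ast\ast)$ forces $K\equiv1/2$, contradicting $(\ast\ast)$ itself; if $\rho_2\neq1$ they give $\Delta\log(1-K)=6K$, again contradicting $(\ast\ast)$. No auxiliary function, no extension across zeros, and no maximum principle are needed. Your pointwise computations ($K_1^\ast=\tfrac12-K$, $\|\alpha_3\|^2=1-K$, $K_2^\ast=\tfrac12$, and the final step $\alpha_4=0$ via Proposition \ref{5}) are correct and do match the paper, but the isotropy steps that everything hinges on need to be rebuilt around the constancy of $\rho_1$ and $\rho_2$.
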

\begin{proof}
We set $\rho _s:=2^sK_s^{\perp}/\left\Vert \alpha_{s+1}\right\Vert^2,$ for any $1\leq s\leq r,$ where $r=[(n-1)/2-1].$ Using \eqref{elipsi} and \eqref{si} it follows that $\rho _s=2\kappa _{s}\mu _{s}/(\kappa _{s}^2+\mu _{s}^2).$ 
Since $f$ is exceptional, by the definition we have that the $s$-th ellipse has constant eccentricity or equivalently the ratio of the semiaxes $\kappa _{s},\mu _{s}$ is constant. Then it is clear that the function $\rho _s$ is constant.

Using equation \eqref{Gausseqa2}, Proposition \ref{3i}(i) for $s=1$ and condition  ($\ast\ast$), we find 
\begin{equation}\label{K*now}
K_1^{\ast}=\frac{1}{2}-K.
\end{equation}
Moreover, from the definition of $\rho_1$ we have that $K_1^{\perp}=\rho_1(1-K).$ 
We claim that $f$ is 1-isotropic, which is equivalent to $\rho_1=1$ due to Lemma \ref{systematic}. Assume to the contrary that
$\rho_1\neq1.$ Then from Lemma \ref{systematic} we have that $\Phi_1\neq0.$ Consequently Proposition \ref{3i}(ii) for $r=1$ yields 
\begin{equation*}
\Delta \log \left(\left\Vert \alpha_{2}\right\Vert^2-2K_1^{\perp}\right) =2\big(2K+K_1^{\ast}\big).
\end{equation*}
Using \eqref{Gausseqa2} and Lemma \ref{systematic}(iii) we obtain
\[
\Delta\log(1-K)=4K+2K_1^{\ast}.
\]
From \eqref{K*now} it follows that 
\[
\Delta\log(1-K)=2K+1.
\]
Combining this with
condition  ($\ast\ast$) we have that $K=1/2$, which is a contradiction. Hence
$\rho_1=1$ and consequently $f$ is 1-isotropic. From Proposition \ref{5}, equation \eqref{K*now} and Lemma \ref{systematic} it follows that 
\begin{equation}\label{a3laast}
\left\Vert \alpha_{3}\right\Vert^2=1-K.
\end{equation}

From Proposition \ref{neoksanaafththfora} we know that $\Phi _2=f_2dz^{6}$
is globally defined. Theorem \ref{ena} implies that it is also holomorphic. Hence either $\Phi _2$ is identically zero or its zeros are isolated.
Moreover, we have that $K_2^\perp=2^{-2}\rho_2\left\Vert \alpha_{3}\right\Vert^2.$ Similarly, we claim that $\rho _{2}=1.$ 
Assume to the contrary that
$\rho_2\neq1.$ Then from Lemma \ref{systematic}, the Hopf differential $\Phi_2\neq0.$ Proposition \ref{3i}(ii) for $r=2$ yields that
\begin{equation*}
\Delta \log \left(\left\Vert \alpha_{3}\right\Vert^2+4K_2^{\perp}\right) =2\big(3K-K_2^{\ast}\big)
\end{equation*}
and 
\begin{equation*}
\Delta \log \left(\left\Vert \alpha_{3}\right\Vert^2-4K_2^{\perp}\right) =2\big(3K+K_2^{\ast}\big),
\end{equation*} 
which due to \eqref{a3laast} implies that
\begin{equation*}
\Delta\log(1-K)=6K.
\end{equation*}
This contradicts $(\ast\ast),$ hence $\rho_2=1$ and consequently $\Phi_2$ is identically zero.
From Proposition \ref{3i}(iii) for $r=2$ and condition  ($\ast\ast$), we obtain $K_{2}^
{\ast}=1/2.$ Proposition \ref{5} for $r=2$ yields 
$ \alpha_{4}=0,$ which completes our proof.
\end{proof}

\section{Isometric deformations of nonisotropic pseudoholomorphic curves in $\mathbb{S}^6$}
In this section, we mostly deal with noncongruent isometric deformations of
pseudoholomorphic curves in $\mathbb{S}^6$ that are always 1-isotropic (see \cite{V16}) but
in general not 2-isotropic.

For a given nonisotropic pseudoholomorphic curve
$g\colon M\to\mathbb{S}^6,$ 
our aim is to describe the moduli space $\mathcal{M}
^K_n(g)$
of all noncongruent minimal surfaces $f\colon M\to\mathbb{S}^n$ that are locally
isometric to the curve $g$, having the same normal curvatures up to order 2 with
the curve $g.$

From \cite[Corollary 5.4(ii)]{V} we know that two
locally isometric 1-isotropic surfaces in $\mathbb{S}^6$ with the same normal curvatures, belong locally
to the same associated family. In particular, if $g$ is simply connected then $\mathcal{M}^K_6
(g)=[0,\pi).$

Hereafter we are interested in the case where the pseudoholomorphic curve $g$ is nonsimply connected. We consider the covering map $\Pi\colon\tilde{M}\to M,$ $\tilde{M}$ being the universal cover of $M$ equipped with the metric and
orientation that make $\Pi$ an orientation preserving local isometry. Corresponding objects on
$\tilde{M}$ are denoted with tilde. Then the map $\tilde{g}\colon \tilde{M}\to\mathbb{S}^6$
with $\tilde{g}=g\circ\Pi$ is up to congruence a pseudoholomorphic curve. Hence, the moduli space $\mathcal{M}^K_6(g)$ of
the curve $g$ can be described as the set of all $\theta\in\mathcal{M}^K_6
(\tilde{g})=[0,\pi)$ such that $\tilde{g}_{\theta}$ factors as $\tilde{g}_{\theta}=g_\theta\circ\Pi$
for a minimal surface $g_\theta\colon M\to\mathbb{S}^6$
and $\tilde{g}_{\theta}$ is a member in the associated family of $\tilde{g}.$
We follow this notation throughout this section.

\begin{lemma}\label{lemmaPhitheta}
(i) For each $\sigma\in\mathcal{D},$ the surfaces $\tilde{g}_{\theta}$
and $\tilde{g}_{\theta}\circ\sigma$ are congruent for every $\theta\in[0,\pi],$ that is there exists $\Psi_{\theta}(\sigma)\in\mathrm{O}
(7)$ such that
\begin{equation} \label{deckagain}
\tilde{g}_
{\theta}\circ\sigma=\Psi_{\theta}(\sigma)\circ\tilde{g}_
{\theta}.
\end{equation}

(ii) If $\theta$ belongs to $\mathcal{M}^K_6(\tilde{g}),$ then $\theta$ belongs to $\mathcal{M}^K_6(g)$ if
and only if
\begin{equation}\label{phithetaIII}
\Psi_{\theta}(\mathcal{D})=\left\{\mathrm{Id}\right\},
\end{equation} 
where $\Psi_{\theta}\in\mathrm{O}(7).$
\end{lemma}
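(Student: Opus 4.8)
The plan is to prove the two parts of Lemma \ref{lemmaPhitheta} by transporting the corresponding results already established for pseudoholomorphic curves in $\mathbb{S}^5$ (Lemmas \ref{deckbig} and \ref{lemmaphitheta}) to the present setting of a single pseudoholomorphic curve in $\mathbb{S}^6$ with prescribed normal curvatures. The essential structural difference is that here we are not taking orthogonal direct sums of associated family members into a high-dimensional sphere; instead, each $\tilde{g}_\theta$ lives in the fixed ambient $\mathbb{S}^6$, so the relevant isometry group is the single factor $\mathrm{O}(7)$ rather than $\mathrm{O}(n+1)$.

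For part (i), I would argue exactly as in the proof of Lemma \ref{deckbig}: since $\tilde{g}=g\circ\Pi$ is a pseudoholomorphic curve, for each deck transformation $\sigma\in\mathcal{D}$ the surface $\tilde{g}\circ\sigma$ is again a pseudoholomorphic curve inducing the same metric, and by \cite[Proposition 9]{DV} the surfaces $\tilde{g}_\theta$ and $\tilde{g}_\theta\circ\sigma$ are congruent for every $\theta\in[0,\pi]$. This directly yields the isometry $\Psi_\theta(\sigma)\in\mathrm{O}(7)$ satisfying \eqref{deckagain}. The verification reduces to invoking the cited congruence result together with the fact that $\sigma$ is an orientation-preserving isometry of $\tilde{M}$, hence commutes with the construction of the associated family $\tilde{g}_\theta$.

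For part (ii), the proof follows the template of Lemma \ref{lemmaphitheta} verbatim, only with $\Phi_{\pmb\theta}(\sigma)\in\mathrm{O}(n+1)$ replaced by $\Psi_\theta(\sigma)\in\mathrm{O}(7)$. Assuming $\theta\in\mathcal{M}^K_6(g)$, there is a minimal surface $g_\theta\colon M\to\mathbb{S}^6$ with $g_\theta\circ\Pi=\tilde{g}_\theta$; composing with an arbitrary $\sigma\in\mathcal{D}$ and using part (i) gives $\tilde{g}_\theta=\Psi_\theta(\sigma)\circ\tilde{g}_\theta$, and since $\tilde{g}_\theta$ is substantial in $\mathbb{S}^6$ this forces $\Psi_\theta(\sigma)=\mathrm{Id}$, i.e.\ \eqref{phithetaIII}. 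Conversely, if $\Psi_\theta(\mathcal{D})=\{\mathrm{Id}\}$, then for any two points $\tilde{p}_1,\tilde{p}_2$ in a fiber $\Pi^{-1}(p)$ choose $\sigma\in\mathcal{D}$ with $\sigma(\tilde{p}_1)=\tilde{p}_2$; part (i) and \eqref{phithetaIII} give $\tilde{g}_\theta(\tilde{p}_2)=\Psi_\theta(\sigma)\circ\tilde{g}_\theta(\tilde{p}_1)=\tilde{g}_\theta(\tilde{p}_1)$, so $\tilde{g}_\theta$ is constant on fibers and descends to $g_\theta\colon M\to\mathbb{S}^6$ with $g_\theta\circ\Pi=\tilde{g}_\theta$. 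Since $\Pi$ is an orientation-preserving local isometry, $g_\theta$ is minimal and has the same normal curvatures as $g$, whence $\theta\in\mathcal{M}^K_6(g)$.

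The only genuine subtlety, and the step I would treat most carefully, is the substantiality argument used to pass from $\tilde{g}_\theta=\Psi_\theta(\sigma)\circ\tilde{g}_\theta$ to $\Psi_\theta(\sigma)=\mathrm{Id}$. One must know that each member $\tilde{g}_\theta$ of the associated family of a substantial nonisotropic pseudoholomorphic curve remains substantial in $\mathbb{S}^6$; this is guaranteed because the associated family is generated by the parallel tensor $J_\theta$ and the isometry $T_\theta$ identifies $N^{\tilde g}_s$ with $N^{\tilde g_\theta}_s$ for all $s$, so the normal subbundles have the same ranks and $\tilde{g}_\theta$ fills out all of $\mathbb{R}^7$. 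An isometry of $\mathbb{R}^7$ fixing a substantial submanifold pointwise must be the identity, which closes the argument.
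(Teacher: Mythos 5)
Your proposal is correct and follows essentially the same route as the paper: part (i) by invoking \cite[Proposition 9]{DV} for the congruence of $\tilde{g}_\theta$ and $\tilde{g}_\theta\circ\sigma$, and part (ii) by the substantiality argument in one direction and the descent-along-fibers argument in the other, exactly as in the paper's proof (which itself mirrors Lemma \ref{lemmaphitheta}). Your additional care about why each $\tilde{g}_\theta$ remains substantial is a reasonable elaboration of a step the paper states without comment, but it does not change the argument.
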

\begin{proof}
(i) From \cite[Proposition 9]{DV} we have that for any $\sigma$ in the group
$\mathcal{D},$ the surfaces $\tilde{g}_\theta\colon\tilde{M}\to\mathbb{S}^6$ and $\tilde{g}_\theta\circ\sigma\colon \tilde{M}\to\mathbb{S}^6$ are congruent for any $\theta\in\mathcal{M}^K_6(g).$
Therefore, there exists 
$\Psi_\theta(\sigma)\in\mathrm{O}(7)$ such that \eqref{deckagain} holds
for every $\theta\in\mathcal{M}^K_6(g).$

(ii) Take $\theta\in\mathcal{M}^K_6(g).$ Then, there exists a minimal surface $g_{\theta}\colon M\to\mathbb
{S}^{6}$ such that $g_{\theta}\circ\pi=\tilde{g}_{\theta}.$
Composing with an arbitrary $\sigma\in\mathcal{D}$ and using
\eqref{deckagain} we obtain
\begin{equation*}
\tilde{g}_{\theta}=\Psi_{\theta}(\sigma)\circ\tilde{g}_{\theta}.
\end{equation*}
Since $\tilde{g}_{\theta}$ has substantial
codimension \eqref{phithetaIII} yields.

Conversely assume that \eqref{phithetaIII} holds. We will prove that 
$\tilde{g}_{\theta}$ factors as $\tilde{g}_{\theta}=g_{\theta}\circ\Pi,$ where $g_{\theta}\colon M\to\mathbb{S}^6$ is
a minimal surface. At first we claim that $\tilde{g}_{\theta}$ remains constant on each fiber of
the covering map $\Pi.$ Indeed, let
$\tilde{p}_1, \tilde{p}_2$ belong to $\Pi^{-1}(p)$ for some $p\in M.$ Then there exists
a deck transformation $\sigma$ such that $\sigma(\tilde{p}_1)=\tilde{p}_2.$ Using
\eqref{deckagain}, we obtain 
\begin{eqnarray} 
\tilde{g}_{\theta}(\tilde{p}_2)
&=&\tilde{g}_{\theta}\circ\sigma(\tilde{p}_1) \nonumber\\
&=&\Psi_{\theta}(\sigma)\circ\tilde{g}_{\theta}(\tilde{p}
_1) \nonumber\\
&=&\tilde{g}_{\theta}(\tilde{p}_1).\nonumber
\end{eqnarray} 
Then $\tilde{g}_{\theta}$ factors as $\tilde{g}_{\theta}=g_{\theta}\circ\Pi,$ where $F\colon M\to\mathbb{S}
^{n}$ is a minimal surface. It remains to prove that $g_{\theta}\in\mathcal{M}^K_6(g).$ Since $\Pi$ is an
orientation preserving local isometry, it is obvious that $F$ is a minimal surface.
\end{proof}
Now we are able to prove Theorem \ref{finiteorcircle}.
\begin{proof}[Proof of Theorem \ref{finiteorcircle}]
If $g$ is substantial in a totally geodesic $\mathbb{S}^5,$ then from \cite[Theorem 1]{DV},
the moduli space of $g$ is either a circle or a finite set.

If $g$ is isotropic and substantial in $\mathbb{S}^6,$ then Theorem \ref{congruent2iso}
implies that the moduli space of $g$ consists of a single point.

Suppose now that $g$ is substantial in $\mathbb{S}^6$ and nonisotropic. Assume that $\mathcal{M}^K_6(g)$ is not finite. Thus, there exists a sequence $\theta^{(i)}, i\in
\mathbb{N},$ that belongs to $\mathcal{M}^K_6(g).$ By passing if necessary to a subsequence, we
assume that this sequence converges to $\theta^{\infty}\in[0,\pi].$
From Lemma \ref{lemmaPhitheta}(ii), we derive that $\Psi_{\theta^{(i)}}(\mathcal{D})=\left\{\mathrm{Id}\right\}$ for
every $i\in\mathbb{N}$ and $\Psi_{\theta^{\infty}}(\mathcal{D})=\left\{\mathrm{Id}\right\}.$
Fix a $\sigma\in\mathcal{D}.$ 
We define the function
$$
h(\theta)=\left(\Psi_{\theta}(\sigma)\right)_{ij}, \theta\in[0,\pi],
$$
where $\left(\Psi_{\theta}(\sigma)\right)_{ij}$ denotes the $(i,j)$-element of the matrix of $\Psi_
{\theta}(\sigma)$ with respect to the standard basis of $\mathbb{R}^{7}$.
By the mean value theorem, there exists $\xi_1^{(i)}$ between $\theta^{(i)}$ and
${\theta}^{\infty}$ such that $(dh/d\theta)(\xi_1^{(i)})=0$ and hence $(dh/d\theta)(\theta^{\infty})
=0.$ Applying
repeatedly the mean value theorem, we obtain inductively
that the $k$-th derrivative satisfies $(d^kh/d\theta^k)(\theta^{\infty})
=0$ for any $k.$ The analyticity of $h$ (cf. \cite{EQ}) implies that $h=\delta_{ij},$ where $\delta_{ij}$ is the Kr\"onecker delta.
\end{proof}
We now turn our attention to the study of isometric deformations of compact nonisotropic pseudoholomorphic curves in $\mathbb{S}^6.$ We will need the following lemmas:

\begin{lemma}\label{antistoixo}
Let $g\colon M\to\mathbb{S}^6$ be a nonisotropic pseudoholomorphic curve.
For each $g_\theta,\theta\in\mathcal{M}^K_6(g),$ there exists a parallel and orthogonal
bundle isomorphism $T_\theta\colon N_gM\to N_{g_\theta}M$ such that the second
fundamental forms of $g$ and $g_\theta$ are related by
\[
\alpha^{g_\theta}(X,Y)=T_\theta\circ\alpha^g(J_\theta X,Y),\ \ X,Y\in TM.
\]
\end{lemma}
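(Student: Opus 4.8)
The plan is to build the bundle map on the universal cover by means of the associated family construction of Section 2, and then to descend it to $M$ using the condition $\Psi_\theta(\mathcal{D})=\{\mathrm{Id}\}$ that, by Lemma \ref{lemmaPhitheta}, characterizes $\theta\in\mathcal{M}^K_6(g)$. First I would work on $\tilde M$. Since $\theta\in\mathcal{M}^K_6(g)$ is a point of $\mathcal{M}^K_6(\tilde g)=[0,\pi)$, the surface $\tilde g_\theta$ is the member of the associated family of $\tilde g$ with parameter $\theta$, and by Lemma \ref{lemmaPhitheta}(ii) we have $\tilde g\circ\sigma=\tilde g$ (from $\Pi\circ\sigma=\Pi$) and, from \eqref{deckagain} with $\Psi_\theta(\sigma)=\mathrm{Id}$, also $\tilde g_\theta\circ\sigma=\tilde g_\theta$ for every $\sigma\in\mathcal{D}$; in particular $\tilde g=g\circ\Pi$ and $\tilde g_\theta=g_\theta\circ\Pi$. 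The construction recalled in Section 2 (see \cite{DG2}) supplies a parallel orthogonal bundle isometry $\tilde T_\theta\colon N_{\tilde g}\tilde M\to N_{\tilde g_\theta}\tilde M$ identifying $N_s^{\tilde g}$ with $N_s^{\tilde g_\theta}$ for all $s\ge1$ and satisfying
\begin{equation}\label{assocfam}
\alpha^{\tilde g_\theta}(X,Y)=\tilde T_\theta\,\alpha^{\tilde g}(\tilde J_\theta X,Y),\qquad X,Y\in T\tilde M,
\end{equation}
where $\tilde J_\theta$ is the pullback of $J_\theta$ under $\Pi$. It then suffices to prove that $\tilde T_\theta$ is $\mathcal{D}$-invariant, so that it descends to the desired $T_\theta$ on $M$.

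Second, I would fix $\sigma\in\mathcal{D}$ and compare \eqref{assocfam} with its pullback by $\sigma$. Since $\Pi$ is an orientation preserving local isometry, $\sigma$ is an isometry of $\tilde M$ commuting with $J$, hence with $\tilde J_\theta$. Because $\tilde g$ and $\tilde g_\theta$ are $\mathcal{D}$-invariant, for $\tilde p\in\tilde M$ and $\tilde q=\sigma(\tilde p)$ the normal spaces $N_{\tilde g}(\tilde p)$ and $N_{\tilde g}(\tilde q)$ (respectively $N_{\tilde g_\theta}(\tilde p)$ and $N_{\tilde g_\theta}(\tilde q)$) coincide as subspaces of $\mathbb{R}^7$, and $\sigma$ preserves the second fundamental forms as well as the normal subbundle decompositions. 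Evaluating \eqref{assocfam} at $\tilde q$ on $d\sigma X,d\sigma Y$ and inserting these invariances, I would arrive at $\tilde T_\theta(\tilde q)\,\alpha^{\tilde g}(\tilde J_\theta X,Y)=\tilde T_\theta(\tilde p)\,\alpha^{\tilde g}(\tilde J_\theta X,Y)$ for all $X,Y$. As the vectors $\alpha^{\tilde g}(\tilde J_\theta X,Y)$ span $N_1^{\tilde g}$, this forces $\sigma^*\tilde T_\theta=\tilde T_\theta$ on the first normal subbundle.

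Third, I would propagate this equality to the whole normal bundle. The automorphism $A:=\sigma^*\tilde T_\theta\circ\tilde T_\theta^{-1}$ of $N_{\tilde g_\theta}\tilde M$ is parallel and orthogonal, preserves each $N_s^{\tilde g_\theta}$, and restricts to the identity on $N_1^{\tilde g_\theta}$. Differentiating $A\xi=\xi$ for sections $\xi$ of $N_1^{\tilde g_\theta}$ and using $\nabla^\perp A=0$ gives $A\nabla^\perp_X\xi=\nabla^\perp_X\xi$; since the $N_2$-components of the $\nabla^\perp_X\xi$ span $N_2^{\tilde g_\theta}$, I would conclude $A=\mathrm{Id}$ on $N_2^{\tilde g_\theta}$, and inductively on every $N_s^{\tilde g_\theta}$, whence $A=\mathrm{Id}$ and $\sigma^*\tilde T_\theta=\tilde T_\theta$ for all $\sigma\in\mathcal{D}$. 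Consequently $\tilde T_\theta$ descends to a parallel orthogonal bundle isomorphism $T_\theta\colon N_gM\to N_{g_\theta}M$, and pushing \eqref{assocfam} down through the local isometry $\Pi$ yields $\alpha^{g_\theta}(X,Y)=T_\theta\,\alpha^g(J_\theta X,Y)$, as required.

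The main obstacle is the third step, i.e. upgrading agreement on $N_1$ to agreement on the full normal bundle; this is exactly the uniqueness of the bundle isometry attached to the associated family. It hinges on the fact that the higher normal subbundles are generated from $N_1^{\tilde g_\theta}$ through the normal connection, so that a parallel automorphism fixing $N_1$ pointwise is compelled to be the identity.
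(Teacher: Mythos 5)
Your proof is correct, and its decisive mechanism is the same one the paper uses: a parallel orthogonal bundle isometry that intertwines the second fundamental forms is forced on $N_1^g$ by the relation $\alpha^{g_\theta}(X,Y)=T_\theta\circ\alpha^g(J_\theta X,Y)$ itself, and agreement on $N_1^g$ propagates to $N_2^g$ by differentiating with the normal connection; since $N_gM=N_1^g\oplus N_2^g$ for a substantial surface in $\mathbb{S}^6$, this determines the isometry completely. Where you genuinely diverge is in the existence step and in where the gluing happens. The paper stays on $M$: it invokes the congruence theorem \cite[Corollary 5.4(ii)]{V} to produce, on every simply connected open set $U\subset M$, a local isometry $T_\theta^U$ with the required property, and then runs the uniqueness mechanism on overlaps to get $T_\theta^U=T_\theta^V$ on $U\cap V$, so the local pieces patch to a global $T_\theta$. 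You instead pass to the universal cover, take the canonical $\tilde T_\theta$ supplied by the associated family construction of \cite{DG2}, and run the same uniqueness mechanism to prove $\mathcal{D}$-equivariance (using $\Psi_\theta(\mathcal{D})=\{\mathrm{Id}\}$ from Lemma \ref{lemmaPhitheta} to identify the normal spaces at $\tilde p$ and $\sigma(\tilde p)$), so that $\tilde T_\theta$ descends. Your route is tied to the fact that $g_\theta$ is, by the paper's construction, the descent of the associated family member $\tilde g_\theta$; the paper's route uses only that $g_\theta$ is isometric to $g$ with the same normal curvatures, and so would apply to such a surface however it arose. Both arguments are sound, and the propagation step (identity on $N_1$ plus parallelism forces identity on $N_2$) is carried out correctly in your third paragraph.
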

\begin{proof}
Since $g$ and $g_\theta$ have the same normal curvatures, it follows from \cite[Corollary 5.4(ii)]{V} that for any simply connected
subset $U$ of $M$ there exists a parallel and orthogonal bundle isomorphism $T_\theta^U\colon
N_g U\to N_{g_\theta}U$ such that the second fundamental forms of the surfaces $g|_U$ and
$g_{\theta}|_U$ are related by
\[
\alpha^{g_\theta|_U}(X,Y)=T_\theta^U\circ\alpha^{g|_U}(J_\theta X,Y),\ \ X,Y\in TM.
\]
Let $U,V$ be simply connected subsets of $M$ with $U\cap V\neq\varnothing.$
Then on $U\cap V$ we have
\[
T_\theta^U\circ\alpha^{g|_U}(J_\theta X,Y)=T_\theta^V\circ\alpha^{g|_V}(J_\theta X,Y),
\]
for every $X,Y\in TM.$ Equivalently we obtain 
\[
\left(T_\theta^U-T_\theta^V\right)\circ\alpha^{g|_{U\cap V}}(X,Y)=0
\]
and obviously $\left(T_\theta^U-T_\theta^V\right)(N^{g|_{U\cap V}}_1)=0.$

Differentiating we obtain $\left(T_\theta^U-T_\theta^V\right)(N^{g|_{U\cap V}}_2)=0,$ which yields
that $T_\theta^U=T_\theta^V$ on $U\cap V.$ Thus,
$T_\theta^U$ is globally well defined.
\end{proof}
For each orthonormal frame along any minimal surface, one has the connection forms (cf. \cite{V}).
\begin{lemma}\label{ksanauseful}
Let $g\colon M\to\mathbb{S}^6$ be a substantial nonisotropic pseudoholomorphic curve and
let $M_1$ be the zero set of the second Hopf differential $\Phi_2.$ Around each point of $M\smallsetminus M_1,$ there exist a local
complex coordinate $(U,z),$ $U\subset M\smallsetminus M_1$ and orthonormal
frames $\{e_1,e_2\}$ in $TU,$ $\{e_3,e_4\}$ in $N^g_1U$ and $\{e_5,e_6\}$ in $N^g_2U$ which agree with the given orientations such that:


(i) $e_5$ and $e_6$ give respectively the directions of the major and the minor axes of the second curvature ellipse, and

(ii) $H_5=\kappa_2,$ $H_6=i\mu_2$ and $\kappa_2$ and $\mu_2$ are smooth real functions. Moreover, the connection and the normal connection forms, with respect to this frame, are given respectively, by 
\begin{equation}\label{connectionforms}
\omega_{12}=-\frac{1}{6}*d\log(\kappa_2^2-\mu_2^2),\,\,\omega_{34}=2\omega_{12}+*d\log\kappa_1, \,\, \omega_{56}=\frac{\kappa_2\mu_2}{\kappa_2^2-\mu_2^2}*d\log\frac{\mu_2}{\kappa_2}, 
\end{equation}
where $*$ stands for the Hodge operator.
\end{lemma}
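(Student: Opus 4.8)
The plan is to construct the desired adapted frame explicitly and then read off the connection forms from the structure equations. First I would work on $M\smallsetminus M_1$, where the second Hopf differential $\Phi_2$ is nonzero, so that the second curvature ellipse $\mathcal{E}^g_2$ is a genuine (noncircular) ellipse by the remark following \eqref{what}. This means $\kappa_2>\mu_2\ge0$ and the major and minor axes are well defined up to sign; I choose $e_5,e_6$ to point along the major and minor axes respectively, agreeing with the orientation induced on $N_2^g$. With this choice, the standard parametrization of the curvature ellipse shows that $\alpha_3^g(Z^\varphi,Z^\varphi,Z^\varphi)$ traces $\mathcal{E}^g_2$ with semiaxes $\kappa_2$ along $e_5$ and $\mu_2$ along $e_6$; comparing with the definitions of $h_j^\alpha$ and $H_\alpha=h_1^\alpha+ih_2^\alpha$ in Section 2, and rotating $\{e_1,e_2\}$ in $TU$ so that $e_1$ hits the major-axis direction, gives exactly $H_5=\kappa_2$ and $H_6=i\mu_2$, with $\kappa_2,\mu_2$ smooth since $\Phi_2\neq0$. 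This establishes (i) and (ii).

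For the connection forms I would write down the Codazzi and Ricci equations for the adapted frame. The relation $\omega_{56}$ should come from the Ricci equation for the pair $(e_5,e_6)$ together with the expressions \eqref{prwthsxeshden} and \eqref{deuterhsxeshden} for $K_2^\perp$ and $\|\alpha_3^g\|^2$ in terms of $H_5,H_6$; substituting $H_5=\kappa_2$, $H_6=i\mu_2$ turns the abstract curvature identity into the stated formula $\omega_{56}=\tfrac{\kappa_2\mu_2}{\kappa_2^2-\mu_2^2}*d\log(\mu_2/\kappa_2)$. The tangential form $\omega_{12}$ I would obtain by exploiting that $g$ is a nonisotropic pseudoholomorphic curve: such a curve is $1$-isotropic, so $\kappa_1=\mu_1$ and the first ellipse is a circle, and the higher-order data are governed by the $a$-invariants of Section 2. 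The factor $-1/6$ strongly suggests that $\omega_{12}$ is forced by the pseudoholomorphic structure equations (the nearly Kähler condition on $\mathbb{S}^6$), where the Gaussian curvature relation for such curves produces the coefficient $6$; I would trace this back to the Codazzi equation relating $d\omega_{12}$ to $(\kappa_2^2-\mu_2^2)$ through the eccentricity function.

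The form $\omega_{34}$ for the first normal bundle $N_1^g$ I would derive from the Ricci equation at the first level combined with the $1$-isotropy: writing $\omega_{34}$ in terms of the intrinsic curvature $K_1^*$ and using Proposition \ref{5} together with $\kappa_1=\mu_1$, the relation $\omega_{34}=2\omega_{12}+*d\log\kappa_1$ should emerge, the factor $2$ reflecting that $\alpha_2^g$ is a $(2,0)$-quantity (cf. the complex structure acting as $\cos\theta I+\sin\theta J$ from Section 2, where the second fundamental form picks up the square of the rotation). The main obstacle I anticipate is not any single computation but rather the bookkeeping of the complexified structure equations: one must keep careful track of the $(p,q)$-decompositions from Section 3, the precise sign and factor conventions in the definitions of $H_\alpha$, $\Phi_r$, and the Hodge operator $*$, and ensure the orientation conventions on each $N_s^g$ match those fixed in the preliminaries. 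Concretely, reconciling the holomorphicity of $\Phi_1$ (always true) with the non-holomorphicity of $\Phi_2$ (since $g$ is nonisotropic) is what pins down the nontrivial $*d\log$ terms, and getting every constant exactly right is where the care is required.
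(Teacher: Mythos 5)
Your construction of the frame in part (i) is essentially the paper's: the author also rotates an arbitrary tangent frame so that $e_1$ points at the preimage of the major axis (by maximizing $\theta\mapsto\Vert\alpha^g_3(X_\theta,X_\theta,X_\theta)\Vert^2$, which is smooth and nondegenerate off $M_1$ because the ellipse is noncircular there), sets $e_3=\kappa_1^{-1}\alpha^g(e_1,e_1)$, $e_4=\kappa_1^{-1}\alpha^g(e_1,e_2)$, $e_5=\kappa_2^{-1}\alpha^g_3(e_1,e_1,e_1)$, $e_6=\mu_2^{-1}\alpha^g_3(e_1,e_1,e_2)$, and then checks that this frame coincides with the one of \cite[Lemma 5]{V16}. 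That last identification, which you omit, is what licenses quoting \cite[Lemma 6]{V08} for $\omega_{34}$ and $\omega_{56}$.

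In part (ii) there is a genuine gap in your plan. You propose to obtain $\omega_{56}$ from ``the Ricci equation for the pair $(e_5,e_6)$'' and $\omega_{12}$ from a relation for $d\omega_{12}$. But the Ricci equation for $(e_5,e_6)$ and the Gauss equation only compute $d\omega_{56}$ and $d\omega_{12}$ (the intrinsic curvatures $K_2^*$ and $K$); a connection form is not determined by its exterior derivative, so neither formula in \eqref{connectionforms} can be extracted this way. The forms $\omega_{34}$ and $\omega_{56}$ actually come from the first-order Codazzi-type identities satisfied by $H_5,H_6$ (the relations $dH_\alpha-\cdots\equiv0\bmod\phi$ underlying \cite[Lemma 6]{V08}); substituting $H_5=\kappa_2$, $H_6=i\mu_2$ and separating real and imaginary parts yields $\omega_{56}$ algebraically in terms of $d\kappa_2,d\mu_2$. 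For $\omega_{12}$ the paper does use Ricci equations, but the \emph{mixed} ones, $\langle R^\perp(e_1,e_2)e_3,e_5\rangle=0$, $\langle R^\perp(e_1,e_2)e_4,e_6\rangle=0$, etc., which vanish because $e_3$ and $e_5$ lie in different normal subbundles. These are algebraic in $\omega_{12}$ (not merely in $d\omega_{12}$) precisely because the previously computed forms $\omega_{35}=\tfrac{\kappa_2}{\kappa_1}\omega_1$, $\omega_{46}=\tfrac{\mu_2}{\kappa_1}\omega_1$, etc., are proportional to $\omega_1,\omega_2$, so expanding $d\omega_{35}$ brings in $\omega_{12}$ through $d\omega_1=\omega_{12}\wedge\omega_2$. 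Your appeal to the nearly K\"ahler structure and the heuristic ``the factor $-1/6$ strongly suggests'' does not substitute for this computation; as written, the derivation of all three formulas in \eqref{connectionforms} is missing.
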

\begin{proof}
(i) Take an arbitrary orthonormal frame $\{E_1,E_2\}$ in $TU.$ Arguing
pointwise in $U$ we have that
$$\max\limits_{\theta\in[0,2\pi)}\Vert\alpha^g_3(X_\theta,X_\theta,X_\theta)\Vert=
\kappa_2 \text{\,\, and\,\,}
\min\limits_{\theta\in[0,2\pi)}\Vert\alpha^g_3(X_\theta,X_\theta,X_\theta)\Vert=\mu_2,
$$
where $X_\theta=\cos\theta E_1+\sin\theta E_2.$
Assume that the function $f(\theta)=\Vert\alpha^g_3(X_\theta,X_\theta,X_\theta)\Vert^2$ attains
its maximum at $\theta_0\in[0,2\pi).$ Since $f'(\theta_0)=0,$ we find that 
\[
\langle \alpha^g_3(X_{\theta_0},X_{\theta_0},X_{\theta_0}),\alpha^g_3(X_{\theta_0},X_{\theta_0},X_{\theta_0})\rangle=0,
\]
or equivalently
\[
2\langle \alpha^g_3(E_1,E_1,E_1),\alpha^g_3(E_1,E_1,E_2) \rangle\cos 6\theta =
\left( \Vert\alpha^g_3(E_1,E_1,E_1)\Vert^2-\Vert\alpha^g_3(E_1,E_1,E_2)\Vert^2\right)\sin 6\theta.
\]
Since the second curvature ellipse is not a circle, we choose a smooth function 
$\sigma$ such that
\[
\tan\sigma=
\frac{2\langle \alpha^g_3(E_1,E_1,E_1),\alpha^g_3(E_1,E_1,E_2) \rangle}
{\Vert\alpha^g_3(E_1,E_1,E_1)\Vert^2-\Vert\alpha^g_3(E_1,E_1,E_2)\Vert^2},
\]
or
\[
\cot\sigma=
\frac{\Vert\alpha^g_3(E_1,E_1,E_1)\Vert^2-\Vert\alpha^g_3(E_1,E_1,E_2)\Vert^2}
{2\langle \alpha^g_3(E_1,E_1,E_1),\alpha^g_3(E_1,E_1,E_2) \rangle}.
\]

We now consider the orthonormal frame $\{e_1,e_2\}$ in $TU$ with
$$ e_1=\cos\sigma E_1+\sin\sigma
E_2 \text{\,\,and\,\,} e_2=-\sin\sigma E_1+\cos\sigma E_2.$$
We may also consider the orthonormal frame $\{e_3, e_4\}$
in $N^g_1U$ given by 
$$ e_3=\frac{1}{\kappa_1 }\alpha^g(e_1,e_1)\text{\,\,and\,\,} 
e_4=\frac{1}{\kappa_1}\alpha^g(e_1,e_2)$$
and the orthonormal frame $\{e_5, e_6\}$ in $N^g_2U$ such
that
$$ e_5= \frac{1}{\kappa_2}\alpha^g_3(e_1,e_1,e_1)\text{\,\,and\,\,} e_6=\frac{1}{\mu_2}\alpha^g_3(e_1,e_1,e_2).$$

Let $\{\tilde{e}_5,
\tilde{e}_6\}$ be an orthonormal frame in $N^g_2U$ as in \cite[Lemma 5]{V16}. Then 
the complex valued functions $\tilde{H}_5,\tilde{H}_6$ associated to the
frame $\{\tilde{e}_5, \tilde{e}_6\}$ satisfy
\begin{equation}\label{tildeH}
\tilde{H}_6
=i(\kappa_1-\tilde{H}_5).
\end{equation}
We easily find that

\begin{equation}\label{H5}
\tilde{H}_5=\cos\varphi H_5+\sin\varphi H_6
\end{equation}
and 
\begin{equation}\label{H6}
\tilde{H}_6=-\sin\varphi H_5+\cos\varphi H_6,
\end{equation}
where $\varphi$ is the angle between $e_5$ and $\tilde{e}_5.$
Since $H_5=\kappa_2$ and $H_6=i\mu_2,$ equations
\eqref{tildeH}, \eqref{H5} and \eqref{H6} yield $\varphi=0$ and consequently the orthonormal
frames $\{e_5,e_6\}$ and $\{\tilde{e}_5,\tilde{e}_6\}$ coincide.

(ii) It follows directly from \cite[Lemma 6]{V08} that the connection forms $\omega_{34}$ and $\omega_{56}$
are given by \eqref{connectionforms}.

From $\alpha_3(e_1,e_1,e_1)=\kappa_2e_5,$ we obtain
\[
\omega_{35}(e_1)=-\omega_{45}(e_2)=\frac{\kappa_2}{\kappa_1}
\text{\,\,and\,\,\,}
\omega_{36}(e_1)=\omega_{46}(e_2)=0.
\]
Similarly, $\alpha_3(e_1,e_1,e_2)=\mu_2e_6$ implies that
\[
\omega_{46}(e_1)=\omega_{36}(e_2)=\frac{\mu_2}{\kappa_1}
\text{\,\,and\,\,\,}
\omega_{45}(e_1)=\omega_{35}(e_2)=0.
\]
Therefore, 
\[
\omega_{35}=\frac{\kappa_2}{\kappa_1}\omega_1,\ \omega_{45}=-\frac{\kappa_2}{\kappa_1}
\omega_2,
\ \omega_{36}=\frac{\mu_2}{\kappa_1}\omega_2 \text{\,\, and\,\,}\omega_{46}=\frac{\mu_2}{\kappa_1}\omega_1.
\]
Using the above, the Ricci equations 
\[
\langle R^\perp(e_1,e_2)e_3,e_5\rangle=0 \text{\,\ and \,} \langle R^\perp
(e_1,e_2)e_4,e_6\rangle=0,
\]
where $R^\perp$ is the curvature tensor of the normal bundle, are written equivalently as
\[
3\omega_{12}(e_1)=\frac{\mu_2}{\kappa_2}\omega_{56}(e_1)+e_2\big(\log\frac
{\kappa_2}{\kappa_1}\big)-*d\log\kappa_1(e_1)
\]
and
\[
3\omega_{12}(e_1)=\frac{\kappa_2}{\mu_2}\omega_{56}(e_1)+e_2\big(\log\frac
{\mu_2}{\kappa_1}\big)-*d\log\kappa_1(e_1)
\]
respectively. From these and from the fact that the normal connection form $\omega_{56}$ is given by \eqref{connectionforms}, one can easily obtain
\begin{eqnarray}\label{omegapros}
\omega_{12}(e_1)=-\frac{1}{6}*d\log(\kappa_2^2-\mu_2^2)(e_1).
\end{eqnarray}
Arguing similarly for the Ricci equations 
\[
\langle R^\perp(e_1,e_2)e_3,e_6\rangle=0 \text{\,\ and \,} \langle R^\perp
(e_1,e_2)e_4,e_5\rangle=0
\]
we have that
\begin{eqnarray*}
\omega_{12}(e_2)=-\frac{1}{6}*d\log(\kappa_2^2-\mu_2^2)(e_2), \nonumber
\end{eqnarray*}
which combined with \eqref{omegapros} yields the connection form $\omega_{12}$ of \eqref{connectionforms}.
\end{proof}

Let $g\colon M\to\mathbb{S}^6$ be a substantial pseudoholomorphic curve. Assume hereafter
that $g$ is nonisotropic. For each point $p\in M\smallsetminus M_1,$ we consider
$\{e_1,e_2,e_3,e_4,e_5,e_6\}$ being an orthonormal frame on a neighborhood $U
\subset M\smallsetminus M_1$ of $p$ as in Lemma
\ref{ksanauseful}. 
We note that the connection form $\omega_{56}$ cannot vanish on any open subset of
$M\smallsetminus M_1.$ 
Suppose to the contrary that $\omega_{56}=0.$ Then \eqref{connectionforms} implies that
$\mu_2=\lambda \kappa_2$ for some $\lambda\in\mathbb{R}^+$ and from \cite[Theorem 5(iii)]{V16} we obtain 
\[
\kappa_2=\frac{\kappa_1}{\lambda+1} \text{\,\, and\,}\ \mu_2=\frac{\lambda\kappa_1}{\lambda+1}.
\]
From \eqref{connectionforms} it follows that the connection form is given by
\[
\omega_{12}=-\frac{1}{3}*d\log\kappa_1,
\]
which implies
\[
6K=\Delta\log{\kappa_1^2}=\Delta\log(1-K).
\]
According to Theorem \ref{eschvlach}, this would imply a reduction of codimension, which is a contradiction.

For any $\theta\in\mathcal{M}^K_6(g),$ let $\{e_1,e_2,T_\theta e_3,T_\theta e_4,T_\theta
e_5,T_\theta e_6\}$ be an orthonormal frame along $g_\theta,$ where $T_\theta$ is the bundle
isomorphism of Lemma \ref{antistoixo}. The complex valued functions $H_3, H_4, H_5, H_6$ of $g,$ associated 
to the orthonormal frame $\{e_1,e_2,e_3,e_4,e_5,e_6\}$ and the corresponding
functions $H_3^\theta, H_4^\theta, H_5^\theta, H_6^\theta$ of $g_\theta$, associated to the orthonormal
frame
$\{e_1,e_2,T_\theta e_3,T_\theta e_4,T_\theta e_5,T_\theta e_6\}$ satisfy
\begin{equation}\label{xrhsimoweing}
H_3^\theta=e^{-i\theta}H_3,\,\,H_4^\theta=e^{-i\theta}H_4,\,\,H_5^\theta=e^{-i\theta}H_5\,\text{ and }H_6^\theta=e^{-i\theta}H_6.
\end{equation}

Using \eqref{xrhsimoweing} and the Weingarten formula for $g_\theta,$ we obtain 
\begin{equation}\label{weine3}
\tilde{\nabla}_ET_{\theta}e_3=\omega_{34}(E)T_{\theta}e_4+\frac{\kappa_2}{\kappa_1}T_{\theta}e_5-\frac{i\mu_2}{\kappa_1}T_{\theta}e_6-\kappa_1 e^{i\theta}dg_{\theta}(\overline{E}),
\end{equation}
\begin{equation}\label{weine4}
\tilde{\nabla}_ET_{\theta}e_4=-\omega_{34}(E)T_{\theta}e_3+\frac{i\kappa_2}{\kappa_1}T_{\theta}e_5+\frac{\mu_2}{\kappa_1}T_{\theta}e_6+i\kappa_1 e^{i\theta}dg_{\theta}(\overline{E}),
\end{equation}
\begin{equation}\label{weine5}
\tilde{\nabla}_ET_{\theta}e_5=\omega_{56}(E)T_{\theta}e_6-\frac{\kappa_2}{\kappa_1}\left(T_{\theta}e_3+iT_{\theta}e_4\right),
\end{equation}
\begin{equation}\label{weine6}
\tilde{\nabla}_ET_{\theta}e_6=-\omega_{56}(E)T_{\theta}e_5+\frac{i\mu_2}{\kappa_1}\left(T_{\theta}e_3+iT_{\theta}e_4\right),
\end{equation}
where $E=e_1-ie_2$ and $\tilde{\nabla}$ stands for the usual connection in the induced bundle $(i_1\circ f)^*(T\mathbb{R}^7),$ with $i_1\colon \mathbb{S}^6\to\mathbb{R}^7$ being the inclusion map.

\begin{lemma}
Suppose that for $\theta_j\in\mathcal{M}^K_6(g), j=1,\dots,m,$ there exist vectors $v_j\in\mathbb{R}^7,$ such that 
\begin{equation*}
\sum\limits_{j=1}^m\langle g_{\theta_j},v_j\rangle=0 \,\,\text{ on }\,\, U.
\end{equation*}
Then the following hold:
\begin{equation}\label{seconduseful}
\sum\limits_{j=1}^me^{i\theta_j}\left(\kappa_2\langle T_{\theta_j}e_5,v_j\rangle-i\mu_2\langle T_{\theta_j}e_6,v_j\rangle\right)=0,
\end{equation}
away from the zeros of $\omega_{56},$ and
\begin{equation}\label{thirduseful}
\overline{E}\Big(\sum\limits_{j=1}^me^{i\theta_j}\langle T_{\theta_j}e_6,v_j\rangle\Big)=-\omega_{56}(\overline{E})\sum\limits_{j=1}^me^{i\theta_j}\langle T_{\theta_j}e_5,v_j\rangle.
\end{equation}
\end{lemma}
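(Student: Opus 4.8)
The plan is to obtain both identities by differentiating the scalar relation $\sum_{j}\langle g_{\theta_j},v_j\rangle=0$ successively along the complex direction $E=e_1-ie_2$, thereby climbing the osculating flag $TU\to N_1^g\to N_2^g$ of $g$. Two structural features make this work uniformly in $j$. First, since $J_\theta E=e^{i\theta}E$ and $\alpha^{g_\theta}(X,Y)=T_\theta\alpha^g(J_\theta X,Y)$, every differentiation along $E$ introduces the common weight $e^{i\theta_j}$; concretely, minimality together with the frame of Lemma \ref{ksanauseful} gives $\alpha^{g_{\theta_j}}(E,E)=2\kappa_1 e^{i\theta_j}T_{\theta_j}(e_3-ie_4)$ and $\alpha^{g_{\theta_j}}(E,\overline{E})=0$. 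Second, because each $T_{\theta_j}$ is a parallel bundle isometry, the connection forms $\omega_{12},\omega_{34},\omega_{56}$ of every $g_{\theta_j}$ coincide with those of $g$ in \eqref{connectionforms}, so connection terms factor out of the sums over $j$.

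First I would differentiate the hypothesis along $E$. Since $\tilde\nabla_E(i_1\circ g_{\theta_j})=dg_{\theta_j}(E)$, this yields the tangential relation $\sum_j\langle dg_{\theta_j}(E),v_j\rangle=0$. Differentiating once more along $E$ and using $\tilde\nabla_E dg_{\theta_j}(E)=dg_{\theta_j}(\nabla_E E)+\alpha^{g_{\theta_j}}(E,E)$ with $\nabla_E E=i\omega_{12}(E)E$ and $\langle E,E\rangle=0$, the tangential part is a multiple of the previous relation and drops out, leaving (after dividing by $2\kappa_1\neq0$) the first-normal relation
\[
\Sigma_1:=\sum_j e^{i\theta_j}\langle T_{\theta_j}(e_3-ie_4),v_j\rangle=0.
\]

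To reach \eqref{seconduseful} I would differentiate $\Sigma_1$ along $E$ using \eqref{weine3} and \eqref{weine4}. The decisive computation is that, in the combination $e_3-ie_4$, the tangential contributions $\mp\kappa_1 e^{i\theta}dg_\theta(\overline{E})$ cancel exactly, giving
\[
\tilde\nabla_E T_\theta(e_3-ie_4)=i\omega_{34}(E)\,T_\theta(e_3-ie_4)+\tfrac{2}{\kappa_1}\bigl(\kappa_2 T_\theta e_5-i\mu_2 T_\theta e_6\bigr).
\]
Pairing with $v_j$, weighting by $e^{i\theta_j}$ and invoking $\Sigma_1=0$ kills the $\omega_{34}$-term and produces precisely \eqref{seconduseful}; this is where the frame of Lemma \ref{ksanauseful}, valid away from the zeros of $\omega_{56}$, is used. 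For \eqref{thirduseful} I would instead differentiate $\sum_j e^{i\theta_j}\langle T_{\theta_j}e_6,v_j\rangle$ along $\overline{E}$. Conjugating \eqref{weine6} gives $\tilde\nabla_{\overline E}T_\theta e_6=-\omega_{56}(\overline E)T_\theta e_5-\tfrac{i\mu_2}{\kappa_1}T_\theta(e_3-ie_4)$, so the first-normal term is a multiple of $\Sigma_1$ and vanishes, leaving exactly \eqref{thirduseful}.

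The main obstacle is bookkeeping rather than depth: one must (i) keep straight that differentiation along $E$ raises the osculating order while $\overline{E}$ lowers it, reflecting the holomorphic type of the flag, (ii) verify the exact cancellation of the tangential terms in the $e_3-ie_4$ combination, and (iii) check that the weights $e^{i\theta_j}$ and the connection coefficients are genuinely common to all $j$ — this rests on the parallelism of the $T_{\theta_j}$ and on $J_\theta E=e^{i\theta}E$. All identities are read on the open dense subset of $U$ where the adapted frame is smooth and nondegenerate.
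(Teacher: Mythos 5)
Your proposal is correct and follows essentially the same route as the paper: differentiate the hypothesis twice along $E$ and use the Gauss formula to reach the first-normal relation $\sum_j e^{i\theta_j}\langle T_{\theta_j}e_3-iT_{\theta_j}e_4,v_j\rangle=0$, differentiate once more via \eqref{weine3}--\eqref{weine4} (with $H_5=\kappa_2$, $H_6=i\mu_2$) to get \eqref{seconduseful}, and use the conjugate of \eqref{weine6} together with that first-normal relation to get \eqref{thirduseful}. The cancellations you flag (tangential terms and the $\omega_{34}$-term in the $e_3-ie_4$ combination) are exactly the ones the paper relies on.
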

\begin{proof}
Our assumption implies that 
\begin{equation*}
\sum\limits_{j=1}^m\langle dg_{\theta_j},v_j\rangle=0.
\end{equation*}
Differentiating and using the Gauss formula we obtain
\begin{equation}\label{auseful}
\sum\limits_{j=1}^me^{i\theta_j}\langle T_{\theta_j}e_3-iT_{\theta_j}e_4,v_j\rangle=0.
\end{equation}

Differentiating \eqref{auseful} with respect to $E$ and using \eqref{xrhsimoweing}, \eqref{weine3} and \eqref{weine4}, it follows that
$$
\sum\limits_{j=1}^me^{i\theta_j}\left(\overline{H}_5\langle T_{\theta_j}e_5,v_j\rangle+\overline{H}_6\langle T_{\theta_j}e_6,v_j\rangle\right)=0.
$$
Using that $H_5=\kappa_2$ and $H_6=i\mu_2$ (see Lemma \ref{ksanauseful}(ii)), the above yields \eqref{seconduseful}.

From \eqref{weine6}, we compute that 
\[
\overline{E}\Big(\sum\limits_{j=1}^me^{i\theta_j}\langle T_{\theta_j}e_6,v_j\rangle\Big)=-\omega_{56}(\overline{E})\sum\limits_{j=1}^me^{i\theta_j}\langle T_{\theta_j}e_5,v_j\rangle -\frac{i\mu_2}{\kappa_1}\sum\limits_{j=1}^me^{i\theta_j}\langle T_{\theta_j}e_3-iT_{\theta}e_4,v_j\rangle,
\]
which in view of \eqref{auseful} yields \eqref{thirduseful}.
\end{proof}

We recall the following result \cite{V16}.
\begin{lemma}\label{apoesch}
Let $f\colon M\to\mathbb{S}^n$ be a compact exceptional surface. The Euler-Poincar\'{e}
number $\chi(N_r^fM)$ of the $r$-th normal bundle and the Euler-Poincar\'{e}
characteristic $\chi(M)$ of $M$ satisfy the following:

(i) If $\Phi_r\neq0$ for some $1\le r<m,$ where $m=[(n-1)/2],$ then
\[
\chi(N_r^fM)=0 \text{\,\, and \,\,} (r+1)\chi(M)=-N(a_r^+)=-N(a_r^-).
\]

(ii) If $\Phi_r=0$ for some $1\le r\le m,$ then
\[
(r+1)\chi(M)-\chi(N_r^fM)=-N(a_r^+).
\]

(iii)If $\Phi_m\neq0,$ then
 \[
(m+1)\chi(M)\mp\chi(N_m^fM)=-N(a_m^\pm).
\]
\end{lemma}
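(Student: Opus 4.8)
The plan is to convert each assertion into an integral identity obtained by integrating the Laplacian of the logarithm of an $a$-invariant over the closed surface $M$, and then to read off the left-hand side through the counting formula of Lemma \ref{forglobal}(i) and the right-hand side through Gauss--Bonnet. First I would record the elementary identities $2^r(a_r^\pm)^2=\Vert\alpha_{r+1}\Vert^2\pm 2^rK_r^\perp$, which follow at once from the definition of the $a$-invariants together with \eqref{elipsi} and \eqref{si}. The technical heart of the argument is that each nonvanishing $a_r^\pm$ is a function of absolute value type: its only zeros are the non-regular points of $f$, which by Proposition \ref{neoksanaafththfora} are isolated and across which $N_r^f$, $\Phi_r$ and hence $\alpha_{r+1}^{(r+1,0)}$ extend with holomorphic-type behaviour; combined with \eqref{what}, which exhibits $a_r^+a_r^-$ as a positive smooth multiple of $|\Phi_r|$, this yields the absolute value type structure, so that $N(a_r^\pm)$ is well defined and Lemma \ref{forglobal}(i) is legitimate.

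Next I would compute the relevant Laplacians away from the zero set. Since an exceptional surface is $(m-1)$-exceptional, Proposition \ref{3i}(ii) applies at every order $1\le r\le m$ for which $\Phi_r\neq0$, and, as $\log$ kills the constant factor $2^r$, it gives
$$\Delta\log a_r^+=(r+1)K-K_r^{\ast},\qquad \Delta\log a_r^-=(r+1)K+K_r^{\ast};$$
when $\Phi_r=0$ one instead uses Proposition \ref{3i}(iii) together with Lemma \ref{systematic}(iii) to obtain the same formula for $a_r^+$ (now $a_r^-\equiv0$). I would then invoke Gauss--Bonnet in the two forms needed here: $\frac{1}{2\pi}\int_M K\,dA=\chi(M)$, and the Chern--Gauss--Bonnet theorem for the oriented rank-two bundle $N_r^f$, whose inherited connection has curvature $K_r^{\ast}$, giving $\frac{1}{2\pi}\int_M K_r^{\ast}\,dA=\chi(N_r^fM)$; the smooth extension of $N_r^f$ across the isolated singular set from Proposition \ref{neoksanaafththfora} is precisely what makes the Euler number well defined and the integrand globally smooth.

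The third step is the case analysis. Integrating $\Delta\log a_r^\pm$ over $M$, Lemma \ref{forglobal}(i) turns the left-hand sides into $-2\pi N(a_r^\pm)$, while the right-hand sides become $2\pi\big((r+1)\chi(M)\mp\chi(N_r^fM)\big)$. In case (i), where $\Phi_r\neq0$ with $r<m$, Proposition \ref{3i}(iv) forces $K_r^{\ast}\equiv0$, hence $\chi(N_r^fM)=0$ and both identities collapse to $(r+1)\chi(M)=-N(a_r^+)=-N(a_r^-)$. In case (ii), where $\Phi_r=0$, only the $a_r^+$ identity survives and yields $(r+1)\chi(M)-\chi(N_r^fM)=-N(a_r^+)$. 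In case (iii), $\Phi_m\neq0$ but $K_m^{\ast}$ need not vanish, since $N_m$ is the last plane bundle and Proposition \ref{3i}(iv) no longer applies; keeping $K_m^{\ast}$ and using both identities gives exactly $(m+1)\chi(M)\mp\chi(N_m^fM)=-N(a_m^\pm)$.

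The main obstacle I expect lies not in this bookkeeping but in the two places where the isolated singular set intervenes: establishing rigorously that $a_r^\pm$ are of absolute value type, and verifying that the Chern--Gauss--Bonnet evaluation of $\chi(N_r^fM)$ and the flux-at-zeros interpretation of $\int_M\Delta\log a_r^\pm\,dA$ are mutually compatible across the singular points. Both rest on the holomorphic-type extension of the higher fundamental forms on exceptional surfaces provided by Proposition \ref{neoksanaafththfora}; once that local structure is secured, the global identities follow by applying the divergence theorem on $M$ with small disks removed around the finitely many singular points and letting their radii tend to zero.
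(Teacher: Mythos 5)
The paper itself offers no proof of this lemma; it is recalled verbatim from \cite{V16}, so there is no in-paper argument to compare against. Your strategy is the standard one and is essentially how the cited source proceeds: integrate $\Delta\log a_r^{\pm}=(r+1)K\mp K_r^{\ast}$ over $M$, convert the left side to $-2\pi N(a_r^{\pm})$ via Lemma \ref{forglobal}(i) and the right side to $2\pi\big((r+1)\chi(M)\mp\chi(N_r^fM)\big)$ via Gauss--Bonnet and Chern--Gauss--Bonnet for the plane bundle $N_r^f$, then specialize using Proposition \ref{3i}(iv) ($K_r^{\ast}\equiv0$ when $\Phi_r\neq0$ and $r<m$) for case (i) and the identical vanishing of $a_r^-$ for case (ii). The sign bookkeeping, the use of $2^r(a_r^\pm)^2=\Vert\alpha_{r+1}\Vert^2\pm2^rK_r^\perp$, and the case analysis are all correct.

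The one step you have not actually secured is that each of $a_r^+$ and $a_r^-$ is \emph{individually} of absolute value type. The identity \eqref{what} only shows that the product $a_r^+a_r^-$ is a positive smooth multiple of $|f_r|$ with $\Phi_r=f_rdz^{2r+2}$ holomorphic, and a product being of absolute value type does not distribute to its factors. The correct device is to work with the complex functions $k_r^{\pm}=\overline{H}_{2r+1}\pm i\overline{H}_{2r+2}$, which satisfy $a_r^{\pm}=|k_r^{\pm}|$ and, by the structure equations underlying the holomorphicity of the Hopf differentials, obey a first-order equation of the form $\partial k_r^{\pm}/\partial\bar z=\lambda k_r^{\pm}+\mu\overline{k_r^{\pm}}$; Chern's theorem (cited in this paper as \cite[p.~32]{Ch}, and used in exactly this way in the proof of Theorem \ref{compnon} for $r=2$, where $\overline{H}_5\pm i\overline{H}_6=z^{m_l}(\cdots)$) then gives the holomorphic-type factorization of each $k_r^{\pm}$ separately. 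With that supplied, $N(a_r^{\pm})$ is well defined, Lemma \ref{forglobal}(i) applies, and the rest of your argument goes through without change.
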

Now we are able to prove Theorem \ref{compnon}.
\begin{proof}[Proof of Theorem \ref{compnon}]
According to Theorem \ref{finiteorcircle}, the space $\mathcal{M}^K_6(g)$ of the
isometric deformations that are isometric to $g$ is either $[0,\pi)$ or a finite subset of $[0,\pi).$ 
Suppose to the contrary that $\mathcal{M}^K_6(g)=[0,\pi).$ We claim that the
coordinate functions of the minimal surfaces $g_\theta, \theta\in[0,\pi),$ are linearly independent. Since these functions are eigenfunctions of the Laplace operator of $M$ with corresponding eigenvalue 2, this contradicts the fact that the eigenspaces of the Laplace operator are finite dimensional. To prove that the coordinate functions are linearly independent, it is enough to prove that if 
\begin{equation}\label{firstusefulagain}
\sum\limits_{j=1}^m\langle g_{\theta_j},v_j\rangle=0,
\end{equation}
for $0<\theta_1<\cdots<\theta_m<\pi,$ then $v_j=0$ for all $1\le j\le m.$

Assume to the contrary that $v_j\neq0$ for all $1\le j\le m.$ Let $M_1=\{p_1,\dots,p_k\}$ be the zero set of $\Phi_2.$ Around each point $p\in M\smallsetminus M_1,$ we choose local complex coordinate $(U,z)$ and an orthonormal frame $\{e_1,e_2,e_3,e_4,e_5,e_6\}$ on $U\subset M\smallsetminus M_1$ as in Lemma \ref{ksanauseful}. We consider the complex valued function
\[
\psi:=\Big(\sum\limits_{j=1}^me^{i\theta_j}\langle T_{\theta_j}e_6,v_j\rangle\Big)^2,
\]
where $T_{\theta_j}\colon N_gM\to N_{g_{\theta_j}}M$ is the bundle
isomorphism of Lemma \ref{antistoixo}. Obviously $\psi$ is well defined on $M\smallsetminus M_1.$ Equations \eqref{connectionforms} imply that
\begin{equation*}
E(\kappa_2)=i\mu_2\omega_{56}(E)-3i\kappa_2\omega_{12}(E),
\end{equation*}
and
\begin{equation*}
E(\mu_2)=i\kappa_2\omega_{56}(E)-3i\mu_2\omega_{12}(E).
\end{equation*}
These yield
\[
\omega_{56}(\overline{E})=\frac{i}{\kappa_2^2-\mu_2^2}\left(\kappa_2\overline{E}(\mu_2)-\mu_2\overline{E}(\kappa_2) \right).
\]
Then \eqref{seconduseful} and \eqref{thirduseful} imply that $\overline{E}\big(\psi\left(1-\mu_2^2/\kappa_2^2\right)\big)=0,$ and hence the function $\Psi:=\psi\left(1-\mu_2^2/\kappa_2^2\right)\colon M\smallsetminus M_1\to\mathbb{C}$ is holomorphic. Since $\Psi$ is bounded, its isolated singularities are removable and consequently there exists a constant $c$ such that
\begin{equation}\label{limit}
\psi(\kappa_2^2-\mu_2^2)=c\kappa_2^2\,\,\text{ on\,}\, M\smallsetminus M_1.
\end{equation}

We claim that $c=0.$ Indeed, if $\kappa_2(p_l)=\mu_2(p_l)>0$ for some $1\le l\le k,$ then taking the limit in \eqref{limit} along a sequence of points in $M\smallsetminus M_1$ that converges to $p_l,$ we deduce that $c=0.$

Suppose now that $\kappa_2(p_l)=\mu_2(p_l)=0$ for all $1\le l\le k.$ Let $(V,z)$ be a local complex coordinate around $p_l$ with $z(p_l)=0.$ 
From the proof of \cite[Proposition 4]{V08} for $s=2$
we obtain 
\[
d\overline{H}_5-3i\overline{H}_5\omega_{12}-\overline{H}_6\omega_{56}\equiv 0\;\mathrm{mod}\; \phi, 
\]
and
\[
d\overline{H}_6-3i\overline{H}_6\omega_{12}+\overline{H}_5\omega_{56}\equiv 0\;\mathrm{mod}\; \phi.
\]
Writing $\phi=Fdz,$ we deduce that 
\[
\frac{\partial\overline{H}_5}{\partial\overline{z}}=3i\overline{H}_5\omega_{12}(\overline{\partial})+\overline{H}_6\omega_{56}(\overline{\partial})
\]
and
\[
\frac{\partial\overline{H}_6}{\partial\overline{z}}=3i\overline{H}_6\omega_{12}(\overline{\partial})-\overline{H}_5\omega_{56}(\overline{\partial}).
\]
Using a theorem due to Chern \cite[p. 32]{Ch}, we may write 
\[
\overline{H}_5=z^{m_l}H^*_5\text{\,\,\,\, and \,\,\,}\overline{H}_6=z^{m_l}H^*_6,\]
where $m_l$ is a positive integer and $H^*_5,H^*_6$ are nonzero smooth complex functions.
Since 
$$
\alpha_3(E,E,E)=4(\overline{H}_5e_5+\overline{H}_6e_6),
$$ 
we obtain
\begin{equation}\label{a3tilde}
\alpha_{3}^{(3,0)}=z^{m_l}\alpha_{3}^{*(3,0)}\,\,\text{ on }\, V,
\end{equation}
where $\alpha_{3}^{*(3,0)}$ is a tensor field of type $(3,0)$ with $\alpha_{3}^{*(3,0)}|_{p_l}\neq0.$
We now define the $N_2^g$-valued tensor field $\alpha_{3}^{*}:=\alpha_{3}^{*(3,0)}+\overline{\alpha_{3}^{*(3,0)}}.$ It is clear that $\alpha_{3}^*$
maps the unit circle on each tangent plane into an ellipse, whose length of the semi-axes are denoted by $\kappa^*_2\ge
\mu^*_2\ge0.$ We furthermore consider the differential form of type $(6,0)$ 
\[
\Phi^*_2:=\langle \alpha_{3}^{*(3,0)},\alpha_{3}^{*(3,0)}\rangle dz^6,
\]
which in view of \eqref{a3tilde}, is related to the Hopf differential of $g$ by $\Phi_2=z^{2m_l}\Phi_2^*.$
We split $\Phi_2$ and $\Phi_2^*,$ with respect to an arbitrary orthonormal frame $\{\xi_1,\dots\xi_6\},$ where $\{\xi_1,\xi_2\}$
and $\{\xi_5,\xi_6\}$ are arbitrary orthonormal frames  of $TV$  and $N^g_2V$ respectively as 
\[
\Phi_2=\frac{1}{4}\big(\overline{H}_5^2+\overline{H}_6^2\big)\phi^6=\frac{1}{4}k^{+}_{2}k^{-}_{2}\phi^6,
\]
\[
\Phi^{*}_{2}=\frac{1}{4}\big(\overline{H}^{*2}_5+\overline{H}^{*2}_6
\big)\phi^6=\frac{1}{4}{k}^{*+}_{2}{k}^{*-}_{2}\phi^6,
\]
where $k^{\pm}_{2}=\overline{H}_5\pm i\overline{H}_6, k^{*\pm}_{2}=\overline{H}^*_5\pm i\overline{H}^*_6,$
\[
{H}^*_5=\langle
{\alpha}^*_{3}(e_1,e_1,e_1),e_5\rangle+i\langle
{\alpha}^*_{3}(e_1,e_1,e_2),e_5\rangle
\]
and
\[
{H}^*_6=\langle
{\alpha}^*_{3}(e_1,e_1,e_1),e_6\rangle+i\langle
{\alpha}^*_{3}(e_1,e_1,e_2),e_6\rangle.
\]
From \eqref{a3tilde}, we obtain $\overline{H}_{a}=z^{m_l}\overline{H}^*_{a}$ for $a=5,6,$ or equivalently,
$k_2^{\pm}=z^{m_l}k_2^{*\pm}.$ Observe that $a_2^{\pm}=|k_2^{\pm}|.$ Hence
\begin{equation}\label{kappamu}
\kappa_2=|z|^{m_l}{\kappa}^*_2,\,\, \mu_2=|z|^{m_l}{\mu}^*_2.
\end{equation}
Now \eqref{limit} yields
\begin{equation}\label{limitnew}
\psi({\kappa}_2^{*2}-{\mu}_2^{*2})=c{\kappa}_2^{*2}\,\,\text{ on\,}\, V\smallsetminus \{p_l\}.
\end{equation}

If ${\kappa}^*_2(p_l)>{\mu}^*_2(p_l)$ for all $1\le l\le k,$ then \eqref{kappamu} implies that 
$$N(a^+_2)=\sum\limits_{l=1}^{k}m_l=N(a^-_2).$$
Hence, Lemma \ref{apoesch} yields $\mathcal{\chi}(N_2^f)=0,$ which contradicts our assumption. Thus, ${\kappa}^*_2(p_l)={\mu}^*_2(p_l)$ for some $1\le l\le k.$ Taking the limit in \eqref{limitnew}, along a sequence of points in $V\smallsetminus \{p_l\}$ which converges to $p_l,$ we obtain $c{\kappa}_2^{*2}(p_l)=0.$ Since ${\alpha}^*_3|_{p_l}\neq0,$ we derive that $c=0.$

In view of \eqref{limit}, we conclude that $\psi=0$ on $M\smallsetminus M_1.$ This implies that 
\[
\sum\limits_{j=1}^me^{i\theta_j}\langle T_{\theta_j}e_6,v_j\rangle=0,
\]
 which due to \eqref{seconduseful} gives that 
\begin{equation*}
\sum\limits_{j=1}^me^{i\theta_j}\langle T_{\theta_j}e_5,v_j\rangle=0.
\end{equation*}
Differentiating this with respect to $E,$ and using \eqref{weine5} and the above, we obtain 
\begin{equation*}
\sum\limits_{j=1}^me^{i\theta_j}\langle T_{\theta_j}e_3+iT_{\theta_j}e_4,v_j\rangle=0
\end{equation*}
which combined with \eqref{auseful} yields
\begin{equation}\label{more}
\sum\limits_{j=1}^me^{i\theta_j}\langle T_{\theta_j}e_3,v_j\rangle=0=
\sum\limits_{j=1}^me^{i\theta_j}\langle T_{\theta_j}e_4,v_j\rangle.
\end{equation}
Differentiating \eqref{more}  with respect to $E$ we find that
\begin{equation*}
\sum\limits_{j=1}^me^{2i\theta_j}\langle dg_{\theta_j}(\overline{E}),v_j\rangle=0.
\end{equation*}
Differentiating once more with respect to $E$ and using the minimality of each $g_{\theta_j}$we obtain 
\begin{equation*}
\sum\limits_{j=1}^me^{2i\theta_j}\langle g_{\theta_j},v_j\rangle=0.
\end{equation*}
Combining this with \eqref{firstusefulagain}, we obtain
\begin{equation*}
\sum\limits_{j=2}^m\langle g_{\theta_j},w_j\rangle=0,
\end{equation*}
where $w_j:=\lambda_jv_j\neq0, j=2,\dots,m$ and $\lambda_j=\cos2\theta_m-\cos2\theta_1$ or $\lambda_j=\sin2\theta_m-\sin2\theta_1.$ By induction, we finally conclude that $\langle g_{\theta_m},w\rangle=0,$ for some nonzero vector $w.$ Therefore, $g_{\theta_m}$ lies in a totally geodesic $\mathbb{S}^5,$ which is a contradiction and the theorem is proved.
\end{proof}

\begin{remark}\label{remarknoniso}
The global assumptions and the assumption on the codimension in Theorem \ref{compnon} are essential and can not
be dropped. In fact, locally we can produce
minimal surfaces in spheres that are isometric to a nonisotropic pseudoholomorphic curve $g$
in $\mathbb{S}^6.$ More precisely, let 
$g_{\theta}, 0\leq \theta<\pi$, be the associated family of a simply connected nonisotropic
pseudoholomorphic curve $g\colon M\to\mathbb{S}^{6}.$ We consider the surface
$G\colon M\to\mathbb{S}^{7m-1}$ defined by
\begin{equation*}
G=a_{1}g_{\theta _{1}}\oplus \cdots\oplus a_{m}g_{\theta _{m}},  
\end{equation*}
where $a_{1},\dots\,,a_{m}$ are any real numbers with $\sum_{j=1}^{m}a_{j}^
{2}=1,$ $0\leq \theta _{1}<\cdots<\theta_{m}<\pi,$ and $\oplus $ denotes the
orthogonal sum with respect to an orthogonal decomposition of the Euclidean space
$\mathbb{R}^{7m}.$  Arguing as in \cite{VT}, it is easy to see that the surface $G$ is minimal and isometric to $g.$ 
\end{remark}

\begin{proposition}\label{1iso}
Let $g\colon M\to\mathbb{S}^6$ be a compact nonisotropic and substantial pseudoholomorphic curve.
If $\hat{g}\colon M\to\mathbb{S}^n$
is a minimal surface that is isometric to $g,$ then $\hat{g}$ is 1-isotropic.
\end{proposition}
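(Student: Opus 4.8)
The plan is to mirror the first half of the proof of Theorem~\ref{forintro}, replacing the equality in condition $(\ast\ast)$ by the \emph{strict} upper bound in \eqref{trik}, which is the only structural information I retain from the nonisotropic hypothesis. Since $\hat{g}$ is isometric to $g$, the two surfaces share the same Gaussian curvature $K$, so the metric of $\hat{g}$ satisfies $6K-1<\Delta\log(1-K)<6K$ away from the totally geodesic points. The first Hopf differential $\Phi_1$ of $\hat{g}$ is globally defined and holomorphic, hence it either vanishes identically, in which case $\hat{g}$ is $1$-isotropic by Lemma~\ref{systematic} and we are done, or it has only isolated zeros. I argue by contradiction, assuming $\Phi_1\not\equiv 0$.

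On the complement of the totally geodesic points I would introduce the nonnegative function
\[
u_1=\frac{\big(\tfrac14\Vert\alpha_2\Vert^4-(K_1^\perp)^2\big)^3}{(1-K)^4},
\]
exactly as in Theorem~\ref{forintro}. Summing the two identities of Proposition~\ref{3i}(ii) for $r=1$ gives $\Delta\log\big(\Vert\alpha_2\Vert^4-4(K_1^\perp)^2\big)=8K$, and together with \eqref{Gausseqa2} this yields $\Delta\log u_1=24K-4\Delta\log(1-K)$ on the regular set. Here is the one place where the argument genuinely departs from the isotropic case: instead of obtaining the constant value $4$ from $(\ast\ast)$, I invoke the strict inequality $\Delta\log(1-K)<6K$ from \eqref{trik} to conclude only that
\[
\Delta\log u_1>24K-24K=0.
\]

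It remains to run the maximum principle on the compact surface $M$. Since $g$ is a nonconstant substantial pseudoholomorphic curve, $1-K$ is of absolute value type (cf.\ \cite{V16}); hence by Lemma~\ref{dena} its zeros form a finite set with even orders $2k_j$, and near each such point \eqref{what} together with the holomorphicity of $\Phi_1$ lets me compare the vanishing orders and use $u_1\le(1-K)^2$ to extend $u_1$ smoothly across the totally geodesic points. From $\Delta u_1\ge u_1\,\Delta\log u_1\ge 0$ the extended $u_1$ is subharmonic, so $\int_M\Delta u_1\,dA=0$ forces $u_1$ to be constant; the strict positivity $\Delta\log u_1>0$ then excludes a positive constant, so $u_1\equiv 0$, i.e.\ $\Vert\alpha_2\Vert^4=4(K_1^\perp)^2$. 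By Lemma~\ref{systematic} this means $\Phi_1\equiv 0$, contradicting our assumption; therefore $\Phi_1\equiv 0$ and $\hat{g}$ is $1$-isotropic.

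I expect the main obstacle to be precisely this final transition from the pointwise strict inequality to the rigidity conclusion. In the isotropic case one has the clean constant $\Delta\log u_1=4$, whereas here $\Delta\log u_1$ is a nonconstant positive function, so I must separately rule out the possibility $u_1\equiv\mathrm{const}>0$, which is exactly where the strictness of \eqref{trik} is indispensable. The accompanying technical point is securing the smooth extension of $u_1$ across the totally geodesic points through the order comparison coming from \eqref{what}, but this is routine given the corresponding step in the proof of Theorem~\ref{forintro}.
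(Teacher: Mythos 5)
Your argument is correct, and it reaches the conclusion by a route that differs from the paper's own proof of Proposition \ref{1iso} in the final, global step. The paper works with the un-cubed quotient $u_1=\bigl(\tfrac14\Vert\hat\alpha_2\Vert^4-(\hat K_1^\perp)^2\bigr)/(1-K)^2$, for which Proposition \ref{3i} gives $\Delta\log u_1=4K_1^*$, with $K_1^*$ the intrinsic curvature of the first normal bundle of $g$; the upper bound in \eqref{trik} then yields only $\Delta\log u_1+4K>0$, so the paper must integrate, invoking Lemma \ref{forglobal} and the Gauss--Bonnet theorem to reach $N(u_1)\le 4\chi(M)\le0$ and hence a contradiction with $\Phi_1\not\equiv0$ (the sign of $\chi(M)$ coming from the nonvanishing holomorphic differential $\Phi_2$ of $g$, and the case where $1-K$ has no zeros being handled separately via Calabi). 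Your choice of exponents $(\cdot)^3/(1-K)^4$, imported from Theorem \ref{forintro}, absorbs the curvature term: $\Delta\log u_1=24K-4\Delta\log(1-K)>0$ pointwise by \eqref{trik}, so a plain subharmonicity/maximum-principle argument on the compact $M$ closes the proof with no zero-counting, no appeal to the sign of $\chi(M)$, and no case distinction on the zero set of $1-K$. Both proofs extract exactly the same information from nonisotropy (the strict inequality $\Delta\log(1-K)<6K$) and use the same order comparison, via $u_1\le(1-K)^2$ and \eqref{what}, to extend $u_1$ smoothly across the totally geodesic points; the one step you must (and do) make explicit is that a positive constant value of $u_1$ would force $\Delta\log u_1\equiv0$ on the dense open set where the identity holds, contradicting the strict positivity supplied by \eqref{trik}.
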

\begin{proof}
According to \cite[Theorem 2]{V16}, the function $1-K$ is of absolute value type.
If the zero set of the function $1-K$ is empty, then from the condition \eqref{trik} it follows that $M$
is homeomorphic to the sphere. From \cite{Calabi} we have that $\hat{g}$ is isotropic and from
\cite{V} it follows that $n=6$ and $\hat{g}$ is congruent to $g.$
Now suppose that the zero set of the function $1-K$ is the nonempty set
$M_0=\left\{ p_1,\dots,p_m\right\} $ with corresponding order $\mathrm{ord}_{p_j}(1-K)=2k_j.$
For each point $p_j, j=1,\dots,m,$ we choose a
local complex coordinate $z$ such that $p_j$ corresponds to $z=0$\ and the induced
metric is written as $ds^2=F|dz|^2.$ Around\ $p_j,$ we have that
\begin{equation}\label{onemoreavt}
1-K=|z|^{2k_j}u_0,
\end{equation}
where $u_0$ is a smooth positive function.

We know that the first Hopf differential $\hat{\Phi}_1=\hat{f}_1dz^4$ of $\hat{g}$ is globally defined
and holomorphic. We claim that $\hat{\Phi}_1$ is identically zero. We assume to
the contrary that it is not identically zero. Hence its zeros are isolated. 
Each $p_j$ is totally geodesic, according to \eqref{Gausseqa2}, and obviously, $\hat{\Phi}_1$ vanishes at each $p_j.$
Thus we may write $\hat{f}_1=z^{l_{1j}}\psi _1$\ around\ $p_j,$ where
$l_{1j}$\ is the order of $\hat{\Phi}_1$\ at $p_j,$ and $\psi _1$ is a nonzero
holomorphic function. Bearing in mind (\ref{what}),\ we obtain 
\begin{equation}\label{a2gias6}
\frac{1}{4}\left\Vert \hat{\alpha}_2\right\Vert^4-(\hat{K_1}^{\perp})^2=2^4F^{-4}|\psi _1|^2|z|^{2l_{1j}}
\end{equation}
around $p_j,$ where $ \hat{\alpha}_2$ and $\hat{K_1}^{\perp}$ are respectedly the second fundmental form and the first normal curvature of $\hat{g}.$ We now consider the function $u_1\colon M
\smallsetminus M_0\to \mathbb{R}$ defined by
$$
u_1=\frac{\frac{1}{4}\left\Vert \hat{\alpha}_2\right\Vert^4-(\hat{K_1}^{\perp})^2}{(1-K)^2}.
$$
In view of (\ref{onemoreavt}) and \eqref{a2gias6} we have that
\[
u_1=2^{4}F^{-4}u_0^{-2}|\psi _1|^2|z|^{2(l_{1j}-2k_j)}.
\]

Using \eqref{Gausseqa2} we find that $u_1\leq 1,$ thus from the above and \eqref{onemoreavt} we deduce that $l_{1j}\geq 2k_j$. Hence we
can extend $u_1$ to a smooth function on $M.$ Applying Proposition \ref{3i}(i) for $s=1$ for $g$
and Proposition \ref{3i}(ii) for $r=1$ for $\hat{g}$ we have
that
\begin{equation*}
\Delta \log \left\Vert \alpha_{2}\right\Vert ^2=2\big(2K-K_1^{\ast}\big),
\end{equation*}

\begin{equation*}
\Delta \log \left(\left\Vert \hat{\alpha}_{2}\right\Vert^2+2\hat{K}_1^{\perp}\right) =2\big(2K-\hat{K}_1^{\ast}\big)
\end{equation*}
and 
\begin{equation*}
\Delta \log \left(\left\Vert \hat{\alpha}_{2}\right\Vert^2-2\hat{K}_1^{\perp}\right) =2\big(2K+\hat{K}_1^{\ast}\big).
\end{equation*}
Combining these equation we obtain 
\begin{equation}\label{ahatnew}
\Delta\log u_1=4K_1^*,
\end{equation}
away from the isolated zeros of $u_1,$ where
$K_1^*$ is the intrinsic curvature of the first normal bundle $N_1^g.$
Moreover Proposition \ref{3i}(iii) for $r=1,$ in combination with \eqref{trik} provides
\[
K-\frac{1}{2}<-K_1^*<K,
\]
or more specific
\[
K_1^*+K>0.
\]
Hence, \eqref{ahatnew} yields that $\Delta\log u_1+4K>0$ and consequently using Lemma \ref{forglobal} and the Gauss-Bonnet theorem
we have that
\begin{equation*}
N(u_1)\le4\chi (M)\le0 ,
\end{equation*}
where $\chi (M)$ is the Euler-Poincar\'{e} characteristic of $M.$
This implies that $N(u_1)=0,$ which contradicts our assumption that $\Phi_1=0.$
\end{proof}

In view of Proposition \ref{1iso}, the surface $\hat{g}$ in Theorem \ref{teleutaioPhi} is $1$-isotropic and consequently the Hopf differential $\hat{\Phi}_2$ of $\hat{g}$ is not identically zero.
The following lemma will be used for the proof of Theorem \ref{teleutaioPhi}.

\begin{lemma}\label{lemma9}
Under the assumptions of Theorem \ref{teleutaioPhi}, the following assertions hold:

(i) The $a$-invariants of $g$ and $\hat{g}$ satisfy the inequality
\[
a_2^- \hat{a}_2^+\le a_2^+\hat{a}_2^-.
\]

(ii) The eccentricities $\varepsilon_2, \hat{\varepsilon}_2$ of the second curvature
ellipses of $g$ and $\hat{g}$ respectively satisfy the inequality $\varepsilon_2\le\hat{\varepsilon}_2.$

(iii) There exists a constant $c\ge1$ such that the lengths $\kappa_2,\mu_2$ and
$\hat{\kappa}_2,\hat{\mu}_2$ of the semi-axes of the second curvature ellipses of
the surfaces $g$ and $\hat{g}$ respectively satisfy 
\begin{equation}\label{peris}
\kappa_2^2-\mu_2^2=c(\hat{\kappa}_2^2-\hat{\mu}_2^2).
\end{equation}

(iv) At a point $p\in M,$ we have that $a_2^+(p)=0$
if and only if $\hat{a}_2^+(p)=0.$ 

(v) If $\hat{a}_2
^+(p)>0$ at a point $p\in M,$ then $\hat{a}_2^-(p)=0$ if and only if $a_2^-(p)=0.$
\end{lemma}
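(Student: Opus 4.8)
The plan is to extract one intrinsic identity first, deduce (i), (ii), (iv) from it by elementary algebra, prove (iii) by a compactness argument, and read off (v) from (iii). I would begin by normalizing: the curve $g$ is pseudoholomorphic, hence $1$-isotropic, and $\hat g$ is $1$-isotropic by Proposition \ref{1iso}, so by Lemma \ref{systematic} and \eqref{Gausseqa2} both surfaces satisfy $K_1^\perp=\hat K_1^\perp=1-K$ and $\|\alpha_2\|^2=\|\hat\alpha_2\|^2=2(1-K)$. Applying Proposition \ref{3i}(iii) with $r=1$ (which only uses $\Phi_1=0$) to each surface gives $\Delta\log\|\alpha_2\|^2=2(2K-K_1^\ast)$, hence $K_1^\ast=\hat K_1^\ast$; substituting into Proposition \ref{5} together with $K_1^\perp=1-K$ yields the key intrinsic identity
\[
\|\alpha_3\|^2=\|\hat\alpha_3\|^2=:A .
\]
With this, the $a$-invariant formula and \eqref{si}, \eqref{elipsi} give $a_2^\pm=(A/4\pm K_2^\perp)^{1/2}$ and $\hat a_2^\pm=(A/4\pm\hat K_2^\perp)^{1/2}$, so that $g$ and $\hat g$ carry second ellipses of the same ``size'' $A/4$ but with $K_2^\perp\ge\hat K_2^\perp$.

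Parts (i), (ii), (iv) are then algebra. For (i) I would compute $(a_2^+\hat a_2^-)^2-(a_2^-\hat a_2^+)^2=\tfrac{A}{2}(K_2^\perp-\hat K_2^\perp)\ge 0$; since all four factors are nonnegative this gives $a_2^-\hat a_2^+\le a_2^+\hat a_2^-$. Part (ii) is a reformulation: (i) reads $(\kappa_2-\mu_2)/(\kappa_2+\mu_2)\le(\hat\kappa_2-\hat\mu_2)/(\hat\kappa_2+\hat\mu_2)$, and since $x\mapsto(1-x)/(1+x)$ is decreasing this is equivalent to $\mu_2/\kappa_2\ge\hat\mu_2/\hat\kappa_2$, i.e.\ $\varepsilon_2\le\hat\varepsilon_2$. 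Part (iv) is immediate from $A=\|\alpha_3\|^2=\|\hat\alpha_3\|^2$, because $a_2^+(p)=0$ forces $\kappa_2(p)=\mu_2(p)=0$, hence $A(p)=0$, hence $\hat a_2^+(p)=0$, and symmetrically.

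The substantive part is (iii). Writing $P:=\kappa_2^2-\mu_2^2$ and $\hat P:=\hat\kappa_2^2-\hat\mu_2^2$, relation \eqref{what} expresses $P$ and $\hat P$, up to the common conformal factor, as the moduli of the coefficients of the second Hopf differentials, so that $P/\hat P=|\Phi_2|/|\hat\Phi_2|$. The crucial observation is that both surfaces, being $1$-isotropic, have a circular first curvature ellipse; hence by Theorem \ref{ena} both $\Phi_2$ and $\hat\Phi_2$ are holomorphic $(6,0)$-forms, and being sections of the same line bundle, $h:=\Phi_2/\hat\Phi_2$ is a meromorphic function on $M$ (here $\Phi_2\not\equiv0$ as $g$ is nonisotropic and $\hat\Phi_2\not\equiv0$ as $\hat g$ is not $2$-isotropic). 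The intrinsic identity together with $\hat K_2^\perp\le K_2^\perp$ gives $P\le\hat P$ pointwise, so $|h|\le 1$; a bounded meromorphic function on the compact Riemann surface $M$ has no poles and is therefore constant. Thus $P=c\hat P$ with $c=|h|$ a positive constant, the bound on $c$ being exactly part (ii).

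I expect the global step in (iii) to be the main obstacle: turning the \emph{a priori} only meromorphic ratio into an honest constant requires controlling it at the common zeros of $\Phi_2$ and $\hat\Phi_2$, where one invokes that these Hopf differentials are of holomorphic (absolute value) type and extend across the singular set (Proposition \ref{neoksanaafththfora}, Lemma \ref{dena}), so that $h$ extends meromorphically and the compactness argument applies. Once (iii) holds, (v) is immediate: $c\neq 0$, so $P(p)=0$ if and only if $\hat P(p)=0$; combining this with (iv), at any $p$ with $\hat a_2^+(p)>0$ (whence $a_2^+(p)>0$) we obtain $a_2^-(p)=0$ exactly when $\hat a_2^-(p)=0$.
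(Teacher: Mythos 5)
Your proposal is correct and follows essentially the same strategy as the paper: both hinge on the intrinsic identity $\Vert\alpha_3\Vert=\Vert\hat\alpha_3\Vert$ (obtained from Propositions \ref{5} and \ref{3i} and the Gauss equation), deduce (i), (ii), (iv) from it together with $\hat K_2^\perp\le K_2^\perp$ by elementary algebra, and prove (iii) by showing that the ratio of the two holomorphic second Hopf differentials is globally controlled on the compact surface. The only real difference is the global step in (iii): the paper works with the real function $u_2=|f_2|^2/|\hat f_2|^2$, extends it across the common zero set by comparing vanishing orders, and applies the maximum principle to this subharmonic function, whereas you observe directly that $h=f_2/\hat f_2$ is a bounded meromorphic function on a compact Riemann surface and hence constant; the two arguments are equivalent, yours being slightly shorter since it bypasses the $\Delta\log$ identities of Proposition \ref{3i}(ii). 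One point worth recording: your argument produces $\kappa_2^2-\mu_2^2=c(\hat\kappa_2^2-\hat\mu_2^2)$ with $0<c\le1$ (since $|f_2|\le|\hat f_2|$), and the paper's own proof yields exactly the same bound via $u_2\le1$, so the ``$c\ge1$'' in the statement of the lemma appears to be a typo for $c\le1$; this does not affect the use made of (iii) in part (v) or in the proof of Theorem \ref{teleutaioPhi}, where only positivity and constancy of $c$ are needed.
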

\begin{proof}
(i) It follows from Proposition \ref{1iso}, Propositions \ref{5} and \ref{3i}  and the Gauss
equation that
$\left\Vert \hat{\alpha}_{3}\right\Vert=\left\Vert\alpha_{3}\right\Vert,$ where $\hat{\alpha}_{3}$ is the third fundamental form of $\hat{g}.$ This means
that 
\begin{equation}\label{meiwsh}
\hat{\kappa}_2^2+\hat{\mu}_2^2=\kappa_2^2+\mu_2^2.
\end{equation}
Combining the above with our assumption $\hat{\kappa}_2\hat{\mu}_2\le\kappa_2\mu_2,$ we have that
$$\hat{\kappa}_2+\hat{\mu}_2\le\kappa_2+\mu_2\text{\,\,\ and\,\,}\kappa_2-\mu_2\le\hat{\kappa}_2-\hat{\mu}_2.$$
The proof of part (i) follows easily.

(ii) Since $\hat{K}_2^\perp\le K_2^\perp,$ equation \eqref{meiwsh} implies that
\[
\frac{\hat{\kappa}_2\hat{\mu}_2}{\hat{\kappa}_2^2+\hat{\mu}_2^2}\le
\frac{\kappa_2\mu_2}{\kappa_2^2+\mu_2^2}.
\]
We set  $\hat{t}_2:=\hat{\mu}_2/\hat{\kappa}_2$ and $t_2:=\mu_2/\kappa_2.$ Obviously, 
$0\le\hat{t}_2,t_2\le1$ and
\[
\frac{\hat{t}_2}{1+\hat{t}_2^2}\le\frac{t_2}{1+t_2^2}.
\]

This immediately implies that $\varepsilon_2\le\hat{\varepsilon}_2.$

(iii) From Proposition \ref{3i}(ii) we have that 

\begin{equation}\label{prwti}
\Delta\log(\kappa_2+\mu_2)=3K-K_2^*,
\,\,\,\ 
\Delta\log(\kappa_2-\mu_2)=3K+K_2^*,
\end{equation}
and 
\begin{equation}\label{tetarti}
\Delta\log(\hat{\kappa}_2+\hat{\mu}_2)=3K-\hat{K}_2^*,
\,\,\,\ 
\Delta\log(\hat{\kappa}_2-\hat{\mu}_2)=3K+\hat{K}_2^*,
\end{equation}
where $\hat{K}_2^*$ denotes the second intrinsic curvature of $\hat{g}.$
Equations \eqref{prwti} and \eqref{tetarti} imply that
\begin{equation*}
\Delta\log\left(\left\Vert \alpha_{3}\right\Vert^4-16(K_2^{\perp})^2\right)=12K
\text{\,\,and\, \,}
\Delta\log\big(\left\Vert \hat{\alpha}_{3}\right\Vert^4-16(\hat{K}_2^{\perp})^2\big)=12K.
\end{equation*}
Inequality $\hat{K}_2^\perp\le K_2^\perp$ yields 
\begin{equation}\label{flast}
|f_2|^2\le|\hat{f}_2|^2,
\end{equation}
where $\Phi_2=f_2dz^6$ and $\hat{\Phi}_2=\hat{f}_2dz^6.$ For each point $p_j\in M_0
=\{p_1,\dots,p_m\}, j=1,\dots,m,$ where $M_0$ is the union of the zero sets of $\Phi_2$ and $\hat\Phi_2,$
we choose a local complex coordinate $z$ such that
$p_j$ corresponds to $z=0$ and the induced metric is written as $ds^2=F|dz|^2.$

Suppose that $\hat{\Phi}_2(p_j)=0$ for some $j=1,\dots,m.$ Then Lemma
\ref{lemma9}(ii) implies that $\Phi_2(p_j)=0.$
Thus we may write $f_2=z^{m(p_j)}u$ and $\hat{f}_2=z^{\hat{m}(p_j)}\hat{u}$
around $p_j,$ where $m(p_j)$ and $\hat{m}(p_j)$ are the orders of $\Phi_2$ and $\hat{\Phi}_2$
respectively at $p_j$ and $u$ and $\hat{u}$ are nonzero holomorphic functions. From
\eqref{flast} we have that $\hat{m}\le m,$ and therefore the function $u_2=|f_2|^2/|\hat{f}_2|^2
\colon M\smallsetminus M_0\to \mathbb{R}$ can be extended to a smooth
function on $M.$ 

Suppose now that $\hat{\Phi}_2(p_j)\neq0$ for some $j=1,\dots,m.$ We have that the function $u_2=|z|^{2m(p_j)}u,$
with $u$ a positive smooth function, can be extended to a smooth
function on $M.$ 

In both cases we have that the function $u_2$ is subharmonic and the maximum principle
yields \eqref{peris}. Obviously \eqref{peris} gives that the zeros of the second Hopf differential $\Phi_2$ of the curve $g$ coincide with
the zeros of the second Hopf differential $\hat{\Phi}_2$ of the surface $\hat{g}.$

(iv) If $a_2^+(p)=0$ at a point $p\in M,$ we obtain $\kappa_2(p)=
\mu_2(p)=0.$ It follows from \eqref{meiwsh} that $\hat{\kappa}_2(p)=\hat{\mu}_2
(p)=0,$ which is $\hat{a}_2^+(p)=0.$ 

(v) Part (v) follows immediately from (i) and \eqref{peris} which is equivalently written as
$a_2^+a_2^-=c \hat{a}_2^+\hat{a}_2^-.$
\end{proof}
Now we prove Theorem \ref{teleutaioPhi}.
\begin{proof}[Proof of Theorem \ref{teleutaioPhi}]
Equations \eqref{prwti} and \eqref{tetarti} yield
\begin{equation}\label{lastnoleast}
\Delta\log\frac{a_2^-\hat{a}_2^+}{a_2^+\hat{a}_2^-}
=2(K^*_2-\hat{K}_2^*),
\end{equation}
on $M\smallsetminus M_0,$ where $M_0=\{p_1,\dots,p_m\}$ is the
union of the zero sets of $\Phi_2$ and $\hat\Phi_2.$
For each point $p_j\in M_0=\{p_1,\dots,p_m\}, j=1,\dots,m,$ we choose a local
complex coordinate $z$ such that $p_j$ corresponds to $z=0$ and the induced
metric is written as $ds^2=F|dz|^2.$

We now claim that the function $u=(a_2^-\hat{a}_2^+)/(a_2^+\hat{
a}_2^-)\colon M\smallsetminus M_0\to \mathbb{R}$ can be extended
to a smooth function on $M.$ To this aim we distinguish the following cases: 

\textit {Case I:} Suppose that $\hat{a}_2^+(p_j)=0$ for some $j=1,\dots,m.$ Then Lemma
\ref{lemma9}(iv) implies that $a_2^+(p_j)=0.$ Hence $\hat{a}
_2^-(p_j)=a_2^-(p_j)=0.$ The $a$-invariants are absolute value type functions,
thus we may write $a_2^+=|z|^{2m_+}u_+,$ $a_2^-=|z|^{2m_-}u_-,$ $\hat{a}
_2^+=|z|^{2\hat{m}_+}\hat{u}_+$ and $\hat{a}_2^-=|z|^{2\hat{m}_-}\hat{u}_-$
around $p_j,$ where $m_+,m_-,\hat{m}_+$ and $\hat{m}_-$ are the orders of $a_2^+,$
$a_2^-,$ $\hat{a}_2^+$ and $\hat{a}_2^-$ respectively at $p_j$
and $u_+, u_-, \hat{u}_+$ and $\hat{u}_-$ are nonvanishing smooth functions.
From Lemma \ref{lemma9}(i) it follows that 
\[
m_-(p_j)+\hat{m}_+(p_j)\ge m_+(p_j)+\hat{m}_-(p_j).
\]
Therefore the function $u=(a_2^-\hat{a}_2^+)/(a_2^+\hat{
a}_2^-)$ can be extended
to a smooth function around $p_j.$ 

\textit{Case II:} Suppose that $\hat{a}_2^+(p_j)>0$ for some $j=1,\dots,m.$
Lemma \ref{lemma9}(v) implies that either $\hat{a}_2^-(p_j)a_2^-(p_j)>0$ or $\hat{a}_2^-
(p_j)=a_2^-(p_j)=0.$ In the former case, by Lemma \ref{lemma9}(i) we have that $a_2^+(p_j)>0.$
Thus $u$ is well defined at $p_j$. 

Now assume that $\hat{a}_2^-
(p_j)=a_2^-(p_j)=0.$ Clearly \eqref{meiwsh} implies that $a_2^+(p_j)>0.$ Since the $a$-invariants are absolute value type functions, we may write $a_2^-=|z|^{2m_-}u_-$ and
$\hat{a}_2^-=|z|^{2\hat{m}_-}\hat{u}_-$ around $p_j,$ where $m_-$ and
$\hat{m}_-$ are the orders of $a_2^-,$ and $\hat{a}_2^-$ respectively at $p_j$
and $u_-$ and $\hat{u}_-$ are nonvanishing smooth functions.
Lemma \ref{lemma9}(i) yields
\[
m_-(p_j)\ge\hat{m}_-(p_j),
\]
therefore the function $u=(a_2^-\hat{a}_2^+)/(a_2^+\hat{
a}_2^-)\colon M\smallsetminus M_0\to \mathbb{R}$ can be extended
to a smooth function around $p_j.$ 


It follows from Proposition \ref{5}
and \eqref{lastnoleast} that 
\begin{equation}\label{least}
\Delta\log u=\frac{2\left\Vert \alpha_{2}\right\Vert^2}{(K_1^\perp)^2}
(K_2^{\perp}-\hat{K}_2^{\perp})+\frac{2\left\Vert \hat{\alpha}_{4}\right\Vert^2}
{4\hat{K}_2^{\perp}}
\end{equation}
away from the isolated zeros of $u.$ Hence $\Delta\log u\ge0$ on $M\smallsetminus M_0.$
By continuity, the function $u$ is subharmonic on $M$ 
and from the maximum principle we have that $u$ is constant. Then from
\eqref{least} it follows that  $\hat{K}_2^\perp= K_2^\perp,$ and $\hat{\alpha}_{4}=0.$
Hence $f(M)$ is contained in a totally geodesic sphere $\mathbb{S}^6$ in
$\mathbb{S}^n$.

The fact that the set of all noncongruent minimal surfaces $\hat{g},$ as in the statement
of the theorem, that are isometric to $g$ is either a circle or a finite set, follows directly
from Theorem \ref{finiteorcircle}.
\end{proof}
\begin{corollary}
Let $g\colon M\to\mathbb{S}^6$ be a compact nonisotropic and substantial pseudoholomorphic
curve with second normal curvature $K_2^\perp.$ Any
substantial minimal surface $\hat{g}$ in $\mathbb{S}^n, n>6,$ whose second normal curvature
$\hat{K}_2^\perp$ satisfies the inequality $\hat{K}_2^\perp\le K_2^\perp,$
cannot be isometric to $g.$
\end{corollary}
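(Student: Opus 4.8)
The plan is to argue by contradiction: I assume that a substantial minimal surface $\hat g\colon M\to\mathbb S^n$ with $n>6$, isometric to $g$ and satisfying $\hat K_2^\perp\le K_2^\perp$, exists, and then split into two cases according to whether or not $\hat g$ is $2$-isotropic. The key observation is that the hypotheses here are exactly those of Theorem \ref{teleutaioPhi} \emph{except} for the requirement that $\hat g$ be non-$2$-isotropic; hence the only genuinely new work is to dispose of the $2$-isotropic case, while the complementary case will be reduced directly to Theorem \ref{teleutaioPhi}.

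First I would invoke Proposition \ref{1iso}: since $g$ is compact, nonisotropic and substantial and $\hat g$ is minimal and isometric to $g$, the surface $\hat g$ is automatically $1$-isotropic. I then record the identity $\|\hat\alpha_3\|=\|\alpha_3\|$, equivalently $\hat\kappa_2^2+\hat\mu_2^2=\kappa_2^2+\mu_2^2$ (cf. \eqref{meiwsh}). I would emphasize that this identity requires only that both $g$ and $\hat g$ be $1$-isotropic and isometric: from the Gauss equation \eqref{Gausseqa2} one has $\|\alpha_2\|^2=2(1-K)=\|\hat\alpha_2\|^2$; applying Proposition \ref{3i}(iii) with $r=1$ to each surface (both are $0$-exceptional, so the formula applies, and $\Phi_1=\hat\Phi_1=0$ by $1$-isotropy) and comparing gives $K_1^\ast=\hat K_1^\ast$; finally Proposition \ref{5} together with $K_1^\perp=1-K=\hat K_1^\perp$ (Lemma \ref{systematic}) yields $\|\alpha_3\|=\|\hat\alpha_3\|$.

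Next I would treat the case where $\hat g$ is $2$-isotropic. Then $\hat\kappa_2=\hat\mu_2$ everywhere, so $\hat K_2^\perp=2\hat\kappa_2\hat\mu_2=\hat\kappa_2^2+\hat\mu_2^2$, and the identity of the previous step gives $\hat K_2^\perp=\kappa_2^2+\mu_2^2$. Since $\kappa_2^2+\mu_2^2\ge 2\kappa_2\mu_2=K_2^\perp$, with equality precisely where $\kappa_2=\mu_2$, combining $\hat K_2^\perp=\kappa_2^2+\mu_2^2\ge K_2^\perp$ with the standing hypothesis $\hat K_2^\perp\le K_2^\perp$ forces $\kappa_2=\mu_2$ at every point, i.e. the second curvature ellipse of $g$ is a circle everywhere. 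As $g$ is substantial in $\mathbb S^6$, this means $g$ is $2$-isotropic and hence isotropic, contradicting the assumption that $g$ is nonisotropic.

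Finally, in the remaining case $\hat g$ is not $2$-isotropic, so all the hypotheses of Theorem \ref{teleutaioPhi} are met; that theorem forces $n=6$, contradicting $n>6$. Since both cases yield a contradiction, no such $\hat g$ can exist, which is the assertion. The only delicate point I anticipate is ensuring that the identity $\|\hat\alpha_3\|=\|\alpha_3\|$ is available independently of the non-$2$-isotropy hypothesis built into Theorem \ref{teleutaioPhi}; because its derivation uses solely the $1$-isotropy of both surfaces furnished by Proposition \ref{1iso}, this poses no real obstruction.
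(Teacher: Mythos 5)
Your proof is correct and follows essentially the same route as the paper: Proposition \ref{1iso} gives $1$-isotropy, the contrapositive of Theorem \ref{teleutaioPhi} forces $\hat{g}$ to be $2$-isotropic when $n>6$ (which you phrase as an explicit case split), and then \eqref{meiwsh} together with $\hat{K}_2^\perp\le K_2^\perp$ forces $\kappa_2=\mu_2$, contradicting nonisotropy of $g$. Your extra care in checking that $\|\hat{\alpha}_3\|=\|\alpha_3\|$ needs only $1$-isotropy (not the non-$2$-isotropy hypothesis under which Lemma \ref{lemma9} is stated) is a legitimate point that the paper's own proof glosses over when it cites \eqref{meiwsh}.
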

\begin{proof}
Assume that $\hat{g}$ is isometric to $g$. Proposition \ref{1iso} implies that $\hat{g}$ is 1-isotropic.
Suppose that $n>6.$ Then Theorem \ref{teleutaioPhi} implies that $\hat{g}$ is 2-isotropic.
Hence $\hat{\kappa}_2=
\hat{\mu}_2.$ The inequality $\hat{K}_2^\perp\le K_2^\perp,$ in combination
with \eqref{meiwsh} implies that $\kappa_2=\mu_2,$ which is a contradiction.
\end{proof}

\end{document}